\documentclass[11pt,twoside]{amsart}
\usepackage{amsmath,amssymb,amsthm}

\textheight 22cm
\textwidth 14.5cm
\topmargin -4mm
\oddsidemargin 5mm
\evensidemargin 5mm

\newtheorem{thm}{Theorem}[section]

 \newtheorem{lem}{Lemma}[section]
 \newtheorem{prop}{Proposition}[section]
 \newtheorem{defn}{Definition}[section]
\newtheorem{rem}{Remark}[section]

\def\Id{{\rm Id}\,}
\def\d{\partial}
\def\ddj{\dot \Delta_j}

\def\ddk{\dot \Delta_k}
\def\ddq{\dot \Delta_q}

\def\tilde{\widetilde}
\def\hat{\widehat}
\def\wh{\widehat}
\def\wt{\widetilde}
\def\ov{\overline}

\newcommand{\with}{\quad\hbox{with}\quad}
\newcommand{\andf}{\quad\hbox{and}\quad}

\newcommand\C{\mathbb{C}}
\newcommand\R{\mathbb{R}}

\newcommand\Z{\mathbb{Z}}

\newcommand{\N}{\mathbb{N}}

\newcommand{\Supp}{\hbox{Supp}\,}

\def\sgn{\,\hbox{\rm sgn}\,}

\renewcommand{\div}{\mbox{\rm div}\;\!}

\def\cA{{\mathcal A}}
\def\cB{{\mathcal B}}
\def\cC{{\mathcal C}}

\def\cF{{\mathcal F}}

\def\cP{{\mathcal P}}
\def\cQ{{\mathcal Q}}
\def\cS{{\mathcal S}}
\def\cX{{\mathcal X}}

\newcommand{\Sum}{\displaystyle \sum}

\newcommand{\Int}{\displaystyle \int}

\newcommand{\rhob}{\overline{\varrho}}
\newcommand{\mub}{\overline{\mu}}
\newcommand{\nub}{\overline{\nu}}
\newcommand{\lambb}{\overline{\lambda}}
\newcommand{\cAb}{\overline{\cA}}
\newcommand{\kab}{\overline{\kappa}}
\newcommand{\ppb}{\overline{p}}
\newcommand{\tmu}{\tilde{\mu}}
\newcommand{\tlamb}{\tilde{\lambda}}
\newcommand{\tka}{\tilde{\kappa}}
\newcommand{\Gop}{e^{\sqrt{c_0 t} \Lambda_1}}
\newcommand{\Gopm}{e^{-\sqrt{c_0 t} \Lambda_1}}
\newcommand{\Goptau}{e^{\sqrt{c_0 \tau} \Lambda_1}}

\begin{document}

\title[Compressible Navier-Stokes-Korteweg system]{Gevrey analyticity and decay for the compressible Navier-Stokes system with capillarity
}
\author {Fr\'{e}d\'{e}ric Charve}
\address{Universit\'{e} Paris-Est,  LAMA (UMR 8050), UPEMLV, UPEC, CNRS, 61 avenue du G\'en\'eral de Gaulle, 94010 Cr\'eteil Cedex 10}
\email{frederic.charve@u-pec.fr}
\author{Rapha\"el Danchin}
\address{Universit\'{e} Paris-Est,  LAMA (UMR 8050), UPEMLV, UPEC, CNRS, 61 avenue du G\'en\'eral de Gaulle, 94010 Cr\'eteil Cedex 10}
\email{danchin@univ-paris12.fr}
\thanks{The first two authors are  partially supported by ANR-15-CE40-0011.}

\author{Jiang Xu}
\address{Department of Mathematics,  Nanjing
University of Aeronautics and Astronautics,
Nanjing 211106, P.R.China,}
\email{jiangxu\underline{ }79math@yahoo.com}
\thanks{The third author  is supported by the National
Natural Science Foundation of China (11471158) and the Fundamental Research Funds for the Central
Universities (NE2015005). He also benefited from  a one month invited professor 
position of UPEC, when  this work has been initiated.}

\subjclass{76N10, 35D05, 35Q05}
\keywords{Time decay rates; Navier-Stokes-Korteweg system; Gevrey regularity; critical Besov spaces; $L^p$ framework.} 

\begin{abstract}
We are concerned with  an isothermal model of viscous and capillary compressible fluids derived by J.~E. Dunn
and J. Serrin (1985), which can be used as a phase transition model. Compared with 
the classical compressible Navier-Stokes equations, there is a smoothing effect on the density 
that comes from the  capillary terms. First,
 we prove  that the global solutions with critical regularity that have been constructed 
in \cite{DD} by the second author and B. Desjardins (2001),  are Gevrey analytic. Second, we extend that result to
a more general critical $L^p$ framework. 
As a consequence, we obtain algebraic time-decay estimates in  critical
Besov spaces (and even exponential decay for the high frequencies) for  any derivatives of the solution.

Our approach  is partly inspired by  the work of   Bae, Biswas \& Tadmor \cite{BBT}
dedicated to the classical incompressible Navier-Stokes equations, 
and requires  our establishing new bilinear estimates (of independent interest) involving the Gevrey regularity for 
the product or composition of functions.

To the best of our knowledge, this is  the first work pointing out Gevrey analyticity for a model of 
\emph{compressible} fluids.
\end{abstract}

\maketitle

\section{Introduction}\setcounter{equation}{0}


When considering a two-phases liquid mixture, it is generally assumed, as a consequence 
of  the Young-Laplace theory, that the phases are separated by a hypersurface and that the jump in the pressure across the hypersurface is proportional to the curvature.

In the  most common description -- the \emph{Sharp Interface} SI model --
the interface between phases corresponds to  a discontinuity in the state space. In contrast,  in the 
\emph{Diffuse Interface} DI model,   the change of phase corresponds 
to a fast but regular transition zone for the density and velocity.

The DI approach has become popular lately as its mathematical and numerical study only requires one set of equations to be solved in a single spatial domain (typically, with a Van der Waals pressure, the phase changes are read through the density values). 
In contrast, with  the SI model one has to solve one system per phase coupled with a free-boundary problem, since the location of the interface is unknown
(see e.g. \cite{CR,Rohdehdr} for more details about the modelling of phase transitions).

\medbreak
The DI model we here aim at considering originates from the works of  Van der Waals and, later, Korteweg
more than one century ago.  The basic idea is to add
 to  the classical compressible fluids equations  a \emph{capillary term},
that  penalizes   high variations of the density. In that way, one  selects only physically relevant solutions, 
that is the ones with density corresponding to either a gas or a liquid, and such that the length of the phase interfaces is minimal.   Indeed, if capillary is absent then one can find an  infinite number of mathematical solutions (most of them being physically wrong although mathematically correct). 
The full derivation of the corresponding equations that we shall  name 
the \emph{compressible Navier-Stokes-Korteweg system} is due to  Dunn and Serrin in \cite{DS}. 
It reads as follows:
\begin{equation}\label{R-E1}
\left\{\begin{array}{l}
\partial_t\varrho+\div(\varrho u)=0,\\[1ex]
\partial_t(\varrho u)+\div(\varrho u\otimes u)-\cA u+\nabla\Pi=\div \mathcal{K}
\end{array}\right.
\end{equation}
where  $\Pi\triangleq P(\varrho)$ is the pressure function, $\cA u \triangleq \div\bigl(2\mu(\rho) D(u)\bigr)+\nabla\big(\lambda (\rho) \div u \big)$ is the diffusion operator, $D(u)=\frac{1}{2}(\nabla u + ^t \nabla u)$ is the symmetric gradient, and the capillarity tensor is given by
$$ \mathcal{K}\triangleq \varrho\,\div(\kappa(\varrho)\nabla\varrho)\,{\rm I}_{\R^d}
+\frac12(\kappa(\varrho)\!-\!\varrho\kappa'(\varrho))|\nabla\varrho|^2\,{\rm I}_{\R^d}-\kappa(\varrho)\nabla\varrho\otimes \nabla\varrho.$$
The density-dependent capillarity function $\kappa$  is assumed to be positive. Note that for smooth enough density and $\kappa,$ we have (see \cite{BDDJ})
\begin{equation}\label{eq:K}
\div\mathcal{K}=\varrho\nabla\Bigl(\kappa(\varrho)\Delta\varrho+\frac12\kappa'(\varrho)|\nabla\varrho|^2\Bigr)\cdotp
\end{equation}
The coefficients $\lambda=\lambda(\varrho)$ and $\mu=\mu(\varrho)$ designate the bulk and shear viscosities, respectively, and are assumed to satisfy in the neighborhood of some reference constant density $\bar\varrho>0$ the conditions
\begin{equation}\label{eq:viscond}
\mu > 0\quad\hbox{and}\quad \nu\triangleq\lambda+2\mu>0.
\end{equation}
Throughout the paper, we shall assume that  the  functions  $\lambda,$ $\mu,$ $\kappa$ and $P,$ 
are real analytic in a neighborhood of $\bar\varrho$. Note that this includes the interesting particular case $\kappa(\rho)=\frac{1}{\rho}$  that corresponds to the so-called quantum fluids. 
The reader may for instance refer to the recent paper by B. Haspot in \cite{H3}
where this case is considered under the `shallow water' assumption for the 
 viscosity coefficients:  $\left(\mu(\rho), \lambda(\rho)\right)=(\ \rho,0).$
\medbreak
System \eqref{R-E1}
is supplemented with initial data
\begin{equation}\label{R-E2}
(\varrho,u)|_{t=0}=(\varrho_0,u_0),
\end{equation}
and we investigate strong solutions in the whole space $\R^d$ with $d\geq2,$ going to a constant equilibrium  $(\bar{\varrho},0)$ with $\bar{\varrho}>0$, at infinity.
\medbreak
The starting point of our paper is the  global existence result  for System \eqref{R-E1} 
in so-called critical Besov spaces
that has been established by the second author and B. Desjardins in \cite{DD}. 
Before stating the result, let us introduce the following  functional space:
$$\displaylines{\qquad E=\Big\{(a,u)\Big|a\in \mathcal{\tilde{C}}_b(\R_+;\dot B^{d/2-1}_{2,1}\cap 
\dot B^{d/2}_{2,1})\cap L^1(\R_+;\dot B^{d/2+1}_{2,1}\cap \dot B^{d/2+2}_{2,1}); \hfill\cr\hfill
 u\in \mathcal{\tilde{C}}_b(\R_+;\dot B^{d/2-1}_{2,1})\cap L^1(\R_+;\dot B^{d/2+1}_{2,1})\Big\},\qquad}$$
 the reader being referred to the appendix for the definition of the Besov spaces
 coming into play in  $E.$
 \medbreak
The following result has been established in \cite{DD}\footnote{Actually,  only the case of constant 
capillarity and viscosity coefficients has been considered therein. 
The case of smooth coefficients may be treated along the same lines
(see also the work by B. Haspot in \cite{H0} concerning the general polytropic case).}:
\begin{thm}\label{thm1.1} 
Let $\bar{\varrho}>0$ be such that $P'(\bar{\varrho})>0$. Suppose that the initial density fluctuation $\varrho_0-\bar{\varrho}$ belongs to
$\dot B^{\frac d2}_{2,1}\cap \dot B^{\frac d2-1}_{2,1}$ and that the initial velocity $u_0$ is in $\dot B^{\frac d2-1}_{2,1}$.

 There exists a constant $\eta>0$ depending only on $\kappa,\mu,\nu,\bar\varrho, P'(\bar{\varrho})$ and $d$, such that, if
$$\|\varrho_0-\bar{\varrho}\|_{\dot B^{\frac d2}_{2,1}\cap \dot B^{\frac d2-1}_{2,1}}+\|u_0\|_{\dot B^{\frac d2-1}_{2,1}}\leq \eta,$$
then System \eqref{R-E1} supplemented with \eqref{R-E2} has a unique global solution 
$(\varrho,u)$ such that $(\varrho-\bar{\varrho},u)\in E.$ \end{thm}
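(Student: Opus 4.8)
The plan is to follow the now-classical strategy for quasilinear parabolic-type systems in critical homogeneous Besov spaces, adapting it so as to exploit the extra smoothing on the density produced by the Korteweg tensor. Setting $a\triangleq\varrho-\rhob$ and using \eqref{eq:K}, one rewrites \eqref{R-E1}--\eqref{R-E2} as
\[
\partial_t a+\rhob\,\div u=F(a,u),\qquad \partial_t u-\cA_0 u+\tfrac{P'(\rhob)}{\rhob}\nabla a-\kab\,\nabla\Delta a=G(a,u),\qquad (a,u)|_{t=0}=(a_0,u_0),
\]
where $\cA_0$ is the constant-coefficient strongly elliptic operator obtained by freezing $\mu,\lambda$ at $\rhob$, and $F,G$ gather all the nonlinear contributions: the convection term $u\cdot\nabla u$, the transport term $\div(au)$, the difference $\bigl(\tfrac1\varrho\cA-\tfrac1{\rhob}\cA_0\bigr)u$ coming from the density dependence of the viscosities, the pressure correction $\bigl(\tfrac{P'(\varrho)}{\varrho}-\tfrac{P'(\rhob)}{\rhob}\bigr)\nabla a$, and the capillary corrections $(\kappa(\varrho)-\kab)\nabla\Delta a$ and $\nabla\bigl(\tfrac12\kappa'(\varrho)|\nabla a|^2\bigr)$; each of these is at least quadratic near $(a,u)=(0,0)$. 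The analyticity of $\mu,\lambda,\kappa,P$ near $\rhob$ and the conditions $\kab>0$, $P'(\rhob)>0$ are precisely what make the linear part well behaved.

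The first main step is the linear analysis. Applying the dyadic operators $\ddj$ and splitting $u$ into its divergence-free and potential parts, the solenoidal part solves a plain heat equation with diffusion $\sim\mub\Delta$, whereas the density and the potential part of the velocity are coupled through a $2\times2$ system whose symbol, on the block of frequencies of size $2^j$, has trace of order $-c\,2^{2j}$ and determinant of order $P'(\rhob)\,2^{2j}+\rhob\kab\,2^{4j}>0$; hence its eigenvalues have real part $\lesssim -c\,2^{2j}$ \emph{for every} $j$ — this is the crucial point, the cubic term $-\kab\nabla\Delta a$ upgrading the density block from the merely bounded damping of the non-capillary case to genuine parabolic behaviour. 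I would prove the uniform bound $\|\ddj(a,u)(t)\|_{L^2}\lesssim e^{-c2^{2j}t}\|\ddj(a_0,u_0)\|_{L^2}$ either by an explicit Duhamel/Green-function computation (treating separately the regimes $\nub^2\gtrless 4\rhob\kab$, the borderline case producing only a harmless factor $1+t2^{2j}$) or, more robustly, by building for each $j$ a Lyapunov functional equivalent to $\|\ddj a\|_{L^2}^2+2^{-2j}\|\ddj\nabla a\|_{L^2}^2+\|\ddj u\|_{L^2}^2$ whose time derivative is bounded by $-c\,2^{2j}$ times itself plus the source. Summing against the weights $2^{j(d/2-1)}$ and $2^{jd/2}$ then yields the key a priori estimate: any solution of the linearized system with source $(F,G)$ obeys $\|(a,u)\|_E\lesssim\|a_0\|_{\dot B^{d/2-1}_{2,1}\cap\dot B^{d/2}_{2,1}}+\|u_0\|_{\dot B^{d/2-1}_{2,1}}+\|F\|_{L^1(\R_+;\dot B^{d/2-1}_{2,1}\cap\dot B^{d/2}_{2,1})}+\|G\|_{L^1(\R_+;\dot B^{d/2-1}_{2,1})}$, the two regularity levels carried by $a$ being exactly the $\mathcal{\tilde{C}}_b$ and $L^1$ endpoints of the parabolic gain.

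The second step is to control the source. Using Bony's decomposition together with the product and composition laws in homogeneous Besov spaces (and the fact that $\dot B^{d/2}_{2,1}$ is an algebra), plus the interpolation inequalities $\|u\|_{\widetilde L^2(\R_+;\dot B^{d/2}_{2,1})}\lesssim\|u\|_{\widetilde L^\infty(\R_+;\dot B^{d/2-1}_{2,1})}^{1/2}\|u\|_{L^1(\R_+;\dot B^{d/2+1}_{2,1})}^{1/2}$ and $\|a\|_{\widetilde L^2(\R_+;\dot B^{d/2}_{2,1}\cap\dot B^{d/2+1}_{2,1})}\lesssim\|(a,u)\|_E$, one checks that $\|F\|_{L^1(\dot B^{d/2-1}_{2,1}\cap\dot B^{d/2}_{2,1})}+\|G\|_{L^1(\dot B^{d/2-1}_{2,1})}\lesssim\|(a,u)\|_E^2$ as soon as $\|a\|_{L^\infty(\dot B^{d/2}_{2,1})}$ is small enough for the composition estimates on $1/\varrho$, $\kappa(\varrho)$ and $P'(\varrho)/\varrho$ to apply; the quadratic gain comes from writing each term as a product of two factors each bounded by $\|(a,u)\|_E$ (in the $\widetilde L^2$-based norms whenever a term is of the type "one factor times two derivatives of another", as for $\bigl(\tfrac1\varrho\cA-\tfrac1{\rhob}\cA_0\bigr)u$ or $\tfrac12\kappa'(\varrho)|\nabla a|^2$). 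One records in the same way the Lipschitz bounds $\|N(a,u)-N(a',u')\|\lesssim\bigl(\|(a,u)\|_E+\|(a',u')\|_E\bigr)\|(a,u)-(a',u')\|_E$.

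Finally, combining the linear estimate with the nonlinear one, the map $(a,u)\mapsto S(t)(a_0,u_0)+\int_0^t S(t-s)(F,G)(s)\,ds$, with $S(t)$ the semigroup of the linearized system, is for $\eta$ small enough a contraction of the ball of radius $C\eta$ of $E$; its unique fixed point is the desired global solution, and uniqueness in $E$ follows from the same Lipschitz estimate (alternatively, one constructs a solution by a compactness argument from the Friedrichs scheme and then argues uniqueness separately, at one regularity level below). I expect the genuine difficulty to be concentrated in the linear step: obtaining the parabolic decay $e^{-c2^{2j}t}$ \emph{uniformly} in the dyadic index and across the spectral regimes, while keeping precise track of the two distinct regularity exponents that $a$ must carry in order for the nonlinear products to close — the remainder being a lengthy but essentially routine exercise in paradifferential calculus.
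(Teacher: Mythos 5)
Your proposal follows essentially the same route as the paper's (the paper defers the proof of Theorem \ref{thm1.1} to \cite{DD}, but reproduces all of its ingredients: the Fourier-side energy analysis of the linearized system in Lemma \ref{lem3.1}, the a priori bound of Lemma \ref{lem3.5}, and the fixed-point scheme of Section \ref{sec:3}): Leray projection to decouple the solenoidal part, a $2\times2$ system for the density and the potential velocity, a Godunov--Kawashima-type Lyapunov functional giving the uniform parabolic decay $e^{-c|\xi|^2t}$ thanks to the capillary term, then Bony decomposition and composition estimates to close a contraction argument in $E$.

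One point must be corrected, and it sits exactly where the two regularity levels on $a$ are encoded. The Lyapunov functional you propose, equivalent to $\|\ddj a\|_{L^2}^2+2^{-2j}\|\ddj\nabla a\|_{L^2}^2+\|\ddj u\|_{L^2}^2$, is comparable to $\|\ddj a\|_{L^2}^2+\|\ddj u\|_{L^2}^2$, so the dyadic bound it yields, namely $\|\ddj(a,u)(t)\|_{L^2}\lesssim e^{-c2^{2j}t}\|\ddj(a_0,u_0)\|_{L^2}$ (which is also the estimate you state as the goal), places $\ddj a_0$ and $\ddj u_0$ at the \emph{same} regularity level. Summed against the weight $2^{jd/2}$ this would require $u_0\in\dot B^{\frac d2}_{2,1}$, which is not assumed; one cannot trade the excess $2^{j}$ for the exponential either, since $2^je^{-c2^{2j}t}\lesssim t^{-1/2}$ is not integrable at $t=0$, so the $\wt{\cC}_b(\dot B^{\frac d2}_{2,1})$ control on $a$ is lost. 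The correct functional is the one of Lemma \ref{lem3.1},
$$\mathcal{L}^2_{|\xi|}=(1+\kab|\xi|^2)|\wh a|^2+|\wh v|^2+\beta\bigl(|\xi|^2|\wh a|^2-2|\xi|\,\mathrm{Re}(\wh a\,\ov{\wh v})\bigr)\approx|(\wh a,|\xi|\wh a,\wh v)|^2,$$
i.e.\ the gradient term carries weight $1$ (equivalently $2^{+2j}$ on $\|\ddj a\|^2$, the opposite sign from yours), so that $2^j\|\ddj a\|$ travels \emph{together with} $\|\ddj u\|$ at the single regularity $\frac d2-1$ and the extra derivative on $a$ is built into the functional rather than into the summation weight; the cross term $-2|\xi|\,\mathrm{Re}(\wh a\,\ov{\wh v})$ is also indispensable to extract the dissipation $|\xi|^2(1+\kab|\xi|^2)|\wh a|^2$, which your "equivalent to" formulation leaves implicit. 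This is a fixable slip rather than a conceptual error --- your own statement of the target linear estimate, with $a_0\in\dot B^{\frac d2-1}_{2,1}\cap\dot B^{\frac d2}_{2,1}$ but $u_0$ only in $\dot B^{\frac d2-1}_{2,1}$, shows you know what the output must be --- but as written the stated dyadic estimate does not deliver it. The remaining steps (quadratic source bounds via product and composition laws under smallness of $\|a\|_{L^\infty(\dot B^{d/2}_{2,1})}$, then contraction) match Lemma \ref{lem3.5} and the scheme of Section \ref{sec:3}.
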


Our first result states  that the solutions constructed in  Theorem \ref{thm1.1} are, in fact, Gevrey analytic. 
\begin{thm}\label{thm3.1}
Let the data $(\rho_0,u_0)$ satisfy the conditions of Theorem \ref{thm1.1} 
for some  $\bar{\varrho}>0$  such that $P'(\bar{\varrho})>0,$
and that the functions $\kappa$, $\lambda,$ $\mu$ and $P$ are analytic.
There exist two positive constants $c_0$ and $\eta$ only depending on those functions and on $d$
  such that if we set 
$$F=\Big\{U\in E\Big| \Gop U\in E\Big\},$$
where  $\Lambda_1$  stands for the Fourier multiplier with
symbol\footnote{Also for technical reasons, as observed before in \cite{L-cras}, it is much more
convenient  to use
the  $\ell^1(\R^d)$ norm rather than the usual $\ell^2(\R^d)$ norm associated with $\Lambda=(-\Delta)^{1/2}$.}   $|\xi|_{1}=\sum_{i=1}^{d}|\xi_{i}|,$ then for any data $(\varrho_0,u_0)$ satisfying 
\begin{equation}\label{eq:data}
\|\varrho_0-\bar{\varrho}\|_{\dot B^{\frac d2}_{2,1}\cap \dot B^{\frac d2-1}_{2,1}}+\|u_0\|_{\dot B^{\frac d2-1}_{2,1}}\leq \eta,
\end{equation}
System  \eqref{R-E1}-\eqref{R-E2} admits a unique solution
$(\varrho,u)$ with  $(\varrho-\bar\varrho,u)\in F$.
\end{thm}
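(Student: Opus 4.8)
The plan is to establish time-uniform a priori estimates for the Gevrey-weighted unknown $U_\phi\triangleq\Gop U$, where $U\triangleq(\varrho-\bar\varrho,u)$, and then to upgrade the global solution furnished by Theorem \ref{thm1.1} through a continuation argument. Writing $a\triangleq\varrho-\bar\varrho$ and $\bar\mu\triangleq\mu(\bar\varrho)$, $\bar\lambda\triangleq\lambda(\bar\varrho)$, $\bar\kappa\triangleq\kappa(\bar\varrho)$ (and $\bar\nu=\bar\lambda+2\bar\mu$), System \eqref{R-E1}--\eqref{R-E2} recasts as
$$\partial_t a+\bar\varrho\,\div u=f,\qquad \partial_t u-\bar\mu\Delta u-(\bar\lambda+\bar\mu)\nabla\div u+\tfrac{P'(\bar\varrho)}{\bar\varrho}\,\nabla a-\bar\kappa\,\nabla\Delta a=g,$$
with $(a,u)|_{t=0}=(\varrho_0-\bar\varrho,u_0)$, where $f=-\div(au)$ and $g$ is a finite sum of terms of the type $H(a)\,\nabla\Delta a$, $H(a)\,\nabla a\,\nabla^2 a$, $H(a)\,|\nabla a|^2\nabla a$, $H(a)\,\nabla a$, $H(a)\,\nabla^2 u$, $H(a)\,\nabla a\,\nabla u$ and $u\cdot\nabla u$, every $H$ being analytic near $0$ with $H(0)=0$ (built from $\kappa,\lambda,\mu,P$ and from $\varrho\mapsto1/\varrho$, analytic near $\bar\varrho$). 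Since $\Gop|_{t=0}=\Id$, the datum of $U_\phi$ is again $(\varrho_0-\bar\varrho,u_0)$, so \eqref{eq:data} is exactly the required smallness.

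\emph{Linear part.} Recall from \cite{DD} that after localization by $\ddj$ and the introduction of a Lyapunov functional $\mathcal L_j\simeq\|\ddj a\|_{L^2}+\|\ddj u\|_{L^2}$ mixing $a$, $u$ and the cross term $\langle\ddj u,\nabla\ddj a\rangle_{L^2}$, the linearized flow enjoys a genuine parabolic smoothing on \emph{both} components, $\frac{d}{dt}\mathcal L_j+c\,2^{2j}\mathcal L_j\lesssim\|\ddj f\|_{L^2}+\|\ddj g\|_{L^2}$ — the capillary term $\bar\kappa\,\nabla\Delta a$ being precisely what forces the characteristic roots of the $2\times2$ compressible block (whose product is $P'(\bar\varrho)|\xi|^2+\bar\varrho\bar\kappa|\xi|^4$ and whose sum is $-\bar\nu|\xi|^2$) to have real part $\lesssim-|\xi|^2$ at \emph{all} frequencies. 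Applying $\Gop\ddj$ and redoing the computation with $\mathcal L_{j,\phi}$ built on $\Gop\ddj a$, $\Gop\ddj u$, the sole new contribution is $\frac{\sqrt{c_0}}{2\sqrt t}\langle\Lambda_1\Gop\ddj U,\Gop\ddj U\rangle\lesssim\frac{\sqrt{c_0}}{\sqrt t}2^j\,\mathcal L_{j,\phi}^2$, using $|\xi|_1\le\sqrt d\,|\xi|$. One then splits according to $t$: when $2^j\gtrsim1/\sqrt t$ the smoothing $-c\,2^{2j}\mathcal L_{j,\phi}^2$ dominates that contribution (provided $c_0$ is chosen small in terms of $c$, i.e. of $\bar\nu,\bar\kappa,\bar\varrho,P'(\bar\varrho)$), leaving a net gain $-\frac c2\,2^{2j}\mathcal L_{j,\phi}^2$; when $2^j\lesssim1/\sqrt t$, $\Gop$ restricted to those frequencies is an $O(1)$ Fourier multiplier (here the $\ell^1$ symbol $|\xi|_1$ pays off), so nothing is lost and the estimate of \cite{DD} applies verbatim. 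Integrating in time and summing over $j$ with the $\ell^1$ weight defining the Besov norms yields, with $E_T$ denoting $E$ with $\R_+$ replaced by $[0,T]$,
$$\|U_\phi\|_{E_T}\lesssim\|\varrho_0-\bar\varrho\|_{\dot B^{d/2-1}_{2,1}\cap\dot B^{d/2}_{2,1}}+\|u_0\|_{\dot B^{d/2-1}_{2,1}}+\|\Gop f\|_{L^1_T(\dot B^{d/2-1}_{2,1}\cap\dot B^{d/2}_{2,1})}+\|\Gop g\|_{L^1_T(\dot B^{d/2-1}_{2,1})}.$$

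\emph{Nonlinear part.} From the subadditivity $|\xi|_1\le|\xi-\eta|_1+|\eta|_1$ and Bony's decomposition $vw=T_vw+T_wv+R(v,w)$, one gets $\Gop(vw)$ bounded block by block by $(\Gop v)(\Gop w)$, hence $\|\Gop(vw)\|_{\dot B^s_{2,1}}\lesssim\|\Gop v\|_{\dot B^{s_1}_{2,1}}\|\Gop w\|_{\dot B^{s_2}_{2,1}}$ over the same range of indices as the classical product laws; and for $H$ analytic near $0$ with $H(0)=0$, expanding $H(a)=\sum_{n\ge1}\frac{H^{(n)}(0)}{n!}a^n$ and using $\|\Gop(a^n)\|_{\dot B^{d/2}_{2,1}}\le(C\|\Gop a\|_{\dot B^{d/2}_{2,1}})^n$ (the space $\dot B^{d/2}_{2,1}$ being an algebra) together with the radius of convergence of $H$ gives $\|\Gop(H(a))\|_{\dot B^s_{2,1}}\le C(\|\Gop a\|_{\dot B^{d/2}_{2,1}})\,\|\Gop a\|_{\dot B^s_{2,1}}$ as soon as $\|\Gop a\|_{\dot B^{d/2}_{2,1}}$ is small. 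These are the bilinear and composition estimates announced in the abstract, which I would establish in a preliminary section. Applying them to $f$ and $g$ — the most dangerous term being $(\kappa(\varrho)-\bar\kappa)\nabla\Delta a$, controlled because $\kappa(\varrho)-\bar\kappa$ sits one derivative below $\nabla\Delta a$ in the $L^1_t$-type norms of $E$ — yields $\|\Gop f\|_{L^1_T(\dot B^{d/2-1}_{2,1}\cap\dot B^{d/2}_{2,1})}+\|\Gop g\|_{L^1_T(\dot B^{d/2-1}_{2,1})}\lesssim\|U_\phi\|_{E_T}^2$ whenever $\|U_\phi\|_{E_T}$ is small.

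\emph{Conclusion.} By Theorem \ref{thm1.1} there already is a unique global $(\varrho,u)$ with $U\in E$ and $\|U\|_E\lesssim\eta$; it remains to prove $U_\phi\in E$. Set $T^\ast\triangleq\sup\{T>0:\Gop U\in E_T\}$. Running the fixed-point scheme of \cite{DD} in the weighted space on a short time interval (the datum being unchanged) shows $\Gop U\in E_T$ for some $T>0$, whence $T^\ast>0$. For $T<T^\ast$, combining the two displays above gives $\|U_\phi\|_{E_T}\lesssim\eta+\|U_\phi\|_{E_T}^2$; since $T\mapsto\|U_\phi\|_{E_T}$ is continuous and small for small $T$, a standard bootstrap shows it stays $\lesssim\eta$ for all $T<T^\ast$ once $\eta$ is small enough, and this uniform bound forces $T^\ast=\infty$ and $U_\phi\in E$, hence $(\varrho-\bar\varrho,u)\in F$; uniqueness is inherited from Theorem \ref{thm1.1} since $F\subset E$. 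The main obstacle is twofold: securing the $2^{2j}$-smoothing for the \emph{density} — available only through the capillary coupling — while carrying the weight $\Gop$, whose symbol is singular in time at $t=0$; and proving the Gevrey product and composition estimates with constants that do not deteriorate as the weight grows, which is exactly what the choice of the $\ell^1$ symbol $|\xi|_1$ is designed to ensure.
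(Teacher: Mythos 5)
Your overall strategy is sound and the statements you invoke are all true, but your route differs from the paper's in two structural ways, and one key step is asserted with a justification that would not survive scrutiny as written. On the linear side, the paper does not run a differential inequality on the weighted Lyapunov functional with the singular term $\frac{\sqrt{c_0}}{2\sqrt t}\Lambda_1$; instead it first proves the unweighted pointwise Fourier bound (Lemma \ref{lem3.1}), then multiplies the Duhamel formula by $e^{\sqrt{c_0t}|\xi|_1}$ and factors the resulting symbols as $e^{\sqrt{c_0t}\Lambda_1+\frac12c_0t\Delta}e^{\frac12c_0t\Delta}$ and $e^{\sqrt{c_0(t-\tau)}\Lambda_1+\frac12c_0(t-\tau)\Delta}\,e^{-\sqrt{c_0}(\sqrt{t-\tau}+\sqrt\tau-\sqrt t)\Lambda_1}\,e^{\frac12c_0(t-\tau)\Delta}$, which are uniformly bounded multipliers by Lemmas \ref{lem3.2}--\ref{lem3.3}. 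Your absorption-versus-$O(1)$ frequency splitting achieves the same thing (in the $L^2$ setting one can even check the needed symbol bounds pointwise in $\xi$ using $\sqrt t-\sqrt\tau\le\sqrt{t-\tau}$), so this is a legitimate, arguably more self-contained alternative; note only that for fixed $j$ both time regimes occur, so the Gronwall factor $\exp\bigl(\int\frac{\sqrt{c_0}}{2\sqrt\tau}2^j\,d\tau\bigr)$ over $\{\tau: 2^j\lesssim\tau^{-1/2}\}$ must be checked to be $O(1)$ — it is, since that integral is $O(\sqrt{c_0})$. On the architecture, the paper runs a fixed point directly in $F$ (stability of a ball plus contraction), whereas you bootstrap the already-known global solution; your version additionally needs the local propagation of Gevrey regularity and the continuity of $T\mapsto\|U_\phi\|_{E_T}$, which you correctly reduce to the same fixed-point machinery, so this is only a cosmetic difference.

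The weak point is the bilinear estimate. Subadditivity of $|\cdot|_1$ gives only the Fourier-side domination $|\cF(\Gop(vw))|\le|\widehat V|*|\widehat W|$; it does \emph{not} give "$\Gop(vw)$ bounded block by block by $(\Gop v)(\Gop w)$" in physical space, and in particular the paraproduct bound $\|\ddj\Gop(T_vw)\|_{L^2}\lesssim\|\dot S_{j-1}V\|_{L^\infty}\|\ddj W\|_{L^2}$ is exactly what one cannot get this way: the relevant bilinear multiplier $B_t(F,G)=\Gop(\Gopm F\cdot\Gopm G)$ has a bounded but non-smooth symbol, and its $L^{p_1}\times L^{p_2}\to L^p$ boundedness (the paper's Lemma \ref{lem3.4}, via the decomposition into the Hilbert-transform-type operators $K_\alpha$ and $Z_{t,\alpha,\beta}$) fails at the endpoints $p_i\in\{1,\infty\}$; this is why the paper replaces the $L^\infty$ control of the low-frequency factor by an $\dot B^{d/q}_{q,1}$ control with $q<\infty$ (Proposition \ref{prodlaws2}). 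For the present $L^2$ theorem your claim can be rescued more elementarily than in the paper — apply Plancherel and Young ($L^2*L^1\to L^2$ on the Fourier side) together with Cauchy--Schwarz on the compact spectral supports, which yields precisely the statements of Propositions \ref{prodlaws1}--\ref{prop3.4} for $p=2$ — but as written your justification skips the one genuinely nontrivial analytic input of the whole proof, and it would not extend to the $L^p$ framework of Section \ref{sec:4}. Make that preliminary section carry the Fourier-side Young argument (or the $B_t$ machinery) explicitly, and also note that the delicate capillary term is $\nabla(\nabla\tilde\kappa(a)\cdot\nabla a)$, which must be rewritten as $\nabla(\tilde\kappa'(a)\nabla a\cdot\nabla a)$ so that the composition lemma is applied at regularity $\le d/2$.
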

As a by-product,  we shall obtain  time-decay estimates in the critical Besov spaces, for any derivative of the solution (see Theorem \ref{thm5.1} below).
\medbreak
The rest of the paper unfolds as follows. The next section is
devoted to proving Theorem \ref{thm3.1}. 
Then, in Section \ref{sec:4}, we extend  the statement to the critical $L^p$ 
Besov framework. First, we establish a result in the same spirit 
as Theorem \ref{thm1.1}, but in a more general functional framework, then 
we prove that the solutions constructed therein are also 
Gevrey analytic (see Theorem \ref{thm4.1}) and fulfill decay estimates (see Theorem \ref{thm5.1}). 
\medbreak
Before going into the heart of the matter, let us specify some notations. 
Throughout the paper, $C$ stands for a positive harmless ``constant", the meaning of which is clear
 from the context. Similarly,  $f\lesssim g$ means that  $f\leq Cg$ and $f\thickapprox g$ means that $f\lesssim g$ and
$g\lesssim f$.  It will be also understood that  $\|(f,g)\|_{X}\triangleq\|f\|_{X}+\|g\|_{X}$ for all  $f,g\in X$.
Finally, when $f=(f_1,\cdots, f_d)$ with $f_i\in X$ for $i=1,\cdots, d,$  we shall often use, slightly 
abusively, the notation $f\in X$ instead of $f\in X^d.$


\section{The $L^2$ framework}\setcounter{equation}{0}\label{sec:3}

Proving  Theorem \ref{thm3.1} relies essentially 
on  the classical fixed point theorem in the space $F.$ 
To establish that all the conditions are fulfilled however, we need to prove  a couple of
a priori estimates for smooth enough solutions.
To this end, we  first recast the system 
into a more user-friendly shape, then establish  Gevrey 
type estimates for the corresponding linearized system 
about the constant reference state $(\bar\varrho,0),$
and new nonlinear estimates.

\subsection{Renormalization of System   \eqref{R-E1}}

Throughout the paper, it is  convenient to fix some reference viscosity coefficients
$\bar\lambda$ and $\mub,$  pressure $\bar p$  and capillarity coefficient $\bar\kappa,$ 
and to  rewrite  the diffusion, pressure and capillarity terms as follows:
$$
\begin{cases}
\vspace{0.1cm}
\cA u =\mub \div\bigl(2\mu(\rho) D(u)\bigr)+\lambb \nabla\big(\lambda (\rho) \div u \big),\\
\vspace{0.1cm}
\ppb P'(\varrho) \nabla\varrho,\\
\div\mathcal{K}=\kab \varrho\nabla\Bigl(\kappa(\varrho)\Delta\varrho+\frac12\kappa'(\varrho)|\nabla\varrho|^2\Bigr),\end{cases}$$
in such a way that  $\mu(\rhob)=\lambda(\rhob)=\kappa(\rhob)=P'(\rhob)=1$.
\medbreak
 If we denote $\nub=2\mub+\lambb$, then performing the rescaling:
\begin{equation}
\tilde{\varrho} (t,x)=\frac{1}{\rhob}\varrho\Bigl(\frac{\nub}{\rhob \ppb}\,t, \frac{\nub}{\rhob \sqrt{\ppb}}\,x\Bigr),\quad \tilde{u} (t,x)=\frac{1}{\sqrt{\ppb}} u\Bigl(\frac{\nub}{\rhob \ppb}\,t, \frac{\nub}{\rhob \sqrt{\ppb}}\,x\Bigr),
\end{equation}
the parameters $(\rhob, \mub, \lambb, \ppb, \kab)$ are changed into $(1, \frac{\mub}{\nub}, \frac{\lambb}{\nub}, 1, \kab\frac{\rhob^2}{\nub^2})$. We can therefore assume with no loss of generality  that
\begin{equation}
\begin{cases}
 \rhob=1, \quad \nub=2\mub+\lambb=1, \quad \ppb=1,\\
 \mu(1)=\lambda(1)=\kappa(1)=P'(1)=1.
\end{cases}
\end{equation}
Then, introducing the density fluctuation $a=\varrho-1$,   System \eqref{R-E1}  becomes 
\begin{equation}\label{R-E3}
\begin{cases}
 \d_ta+\div u=f,\\
\d_t u-\cAb u+ \nabla a-\kab \nabla\Delta a=g, 
\end{cases}
\end{equation}
with $f=-\div (au)$, and $g=\Sum_{j=1}^5 g_j$, where
\begin{equation}
 \begin{cases}
  \cAb =\mub \div\bigl(2 D(u)\bigr)+\lambb \nabla\big(\div\, u \big) =\mub \Delta u +(\mub +\lambb) \nabla \div u,\\
  g_1 =-u\cdot \nabla u,\\
  g_2 =(1-I(a))\left(2\mub \div \big(\tmu(a) Du\big) + \lambb \nabla \big(\tlamb(a) \div u\big)\right),\\
  g_3 =-I(a) \cAb u,\\
  g_4 =J(a)\cdot \nabla a,\\
  g_5 =\kab \nabla \left( \tka(a) \Delta a+ \frac12 \nabla \tka (a) \cdot \nabla a\right),
 \end{cases}
 \label{fetg}
\end{equation}
and
\begin{equation}
 \begin{cases}
  \tmu(a)=\mu(1+a)-1, \quad \tlamb(a)=\lambda(1+a)-1, \quad \tka(a)=\kappa(1+a)-1,\\
  I(a)=\frac{a}{1+a}, \quad J(a)=1-\frac{P'(1+a)}{1+a}\cdotp
 \end{cases}
\label{fcts}
 \end{equation}
Let us underline  that all those functions are analytic near zero, and vanish at zero.

\subsection{The linearized system}

The present subsection is devoted to exhibiting  the 
smoothing properties of  \eqref{R-E3}, assuming that $f$ and $g$ are given.
In contrast with the linearized equations for the classical compressible Navier-Stokes system, 
we shall see that here both  the density and the velocity are smoothed out  instantaneously. 
The key to that remarkable property 
is given by the following lemma where, as in all this subsection, we  denote by $\wh z$ the Fourier  transform \emph{with respect
to the space variable} of the function $z\in\cC(\R_+;\cS(\R^d))$. 
\begin{lem}\label{lem3.1}
There exist two  positive constants $c_0$  and $C$ depending only on $(\kab,\mub)$
and $\kab,$ respectively,  such that the following inequality holds for all $\xi\in\R^d$ and $t\geq0$:
$$
|(\wh a, |\xi| \wh a,\wh u)(t,\xi)|\leq 
C\biggl(e^{-c_{0}|\xi|^2t}|(\wh a, |\xi| \wh a, \wh u)(0,\xi)|+\int^{t}_{0}e^{-c_{0}|\xi|^2(t-\tau)}|(\widehat{f}, |\xi|\wh{f}, \wh g)(\tau,\xi)|\,d\tau\biggr)\cdotp$$
\end{lem}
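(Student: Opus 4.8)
The plan is to pass to the Fourier side and to split the velocity into its \emph{solenoidal} and \emph{potential} parts. Let $\cP$ denote the Leray projector onto divergence-free vector fields. Since $\cP$ annihilates $\nabla a$, $\kab\nabla\Delta a$ and $\lambb\nabla\div u$, the field $u^\perp\triangleq\cP u$ satisfies the heat equation $\d_t u^\perp-\mub\Delta u^\perp=\cP g$, whence, in Fourier variables,
$$|\wh{u^\perp}(t,\xi)|\leq e^{-\mub|\xi|^2t}\,|\wh{u^\perp}(0,\xi)|+\int_0^t e^{-\mub|\xi|^2(t-\tau)}\,|\wh g(\tau,\xi)|\,d\tau.$$
It then remains to estimate $(\wh a,|\xi|\wh a)$ together with the potential part $u^\parallel\triangleq u-u^\perp$, whose Fourier transform has modulus $|\wh v|$, where $\wh v\triangleq|\xi|^{-1}\,i\xi\cdot\wh u$ (so that $|\wh v|\leq|\wh u|$ and $|\wh u|\leq|\wh v|+|\wh{u^\perp}|$).

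Taking the Fourier transform of \eqref{R-E3} and projecting the momentum equation onto the direction of $\xi$, one gets the closed $2\times2$ system
$$\d_t\wh a=-|\xi|\,\wh v+\wh f,\qquad \d_t\wh v=|\xi|\bigl(1+\kab|\xi|^2\bigr)\wh a-\nub|\xi|^2\wh v+\wh h,$$
where $\nub\triangleq2\mub+\lambb\ (=1$ after the renormalization of Section~\ref{sec:3}$)$, and $\wh h\triangleq|\xi|^{-1}\,i\xi\cdot\wh g$ satisfies $|\wh h|\leq|\wh g|$. The capillarity produces the factor $1+\kab|\xi|^2$, which is precisely what will give the extra control on $|\xi|\wh a$, but it also makes the "sound speed'' frequency-dependent. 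To symmetrize the coupling I would set $\wh w\triangleq(1+\kab|\xi|^2)^{1/2}\wh a$ and $\sigma\triangleq|\xi|(1+\kab|\xi|^2)^{1/2}$, so that
$$\d_t\wh w=-\sigma\wh v+(1+\kab|\xi|^2)^{1/2}\wh f,\qquad \d_t\wh v=\sigma\wh w-\nub|\xi|^2\wh v+\wh h,$$
a damped wave system whose damping acts on $\wh v$ only.

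The heart of the matter is a Lyapunov-functional estimate of Matsumura--Nishida type for this system, carried out pointwise in $\xi$. I would introduce
$$\cL(t,\xi)\triangleq|\wh w|^2+|\wh v|^2-\ep\,\eta\,{\rm Re}\bigl(\wh w\,\overline{\wh v}\bigr),\qquad \eta\triangleq\frac{|\xi|}{(1+\kab|\xi|^2)^{1/2}},$$
with $\ep$ a small constant to be fixed in terms of $\mub,\kab$. Since $\eta\leq\min(|\xi|,\kab^{-1/2})$, choosing $\ep$ small gives $\ep\eta\leq1$ and hence $\tfrac12(|\wh w|^2+|\wh v|^2)\leq\cL\leq\tfrac32(|\wh w|^2+|\wh v|^2)$. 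A direct computation, in which the skew-symmetric coupling cancels the cross terms in $\frac{d}{dt}(|\wh w|^2+|\wh v|^2)$ and in which one uses $\eta\sigma=|\xi|^2$, gives
$$\frac{d}{dt}\cL=-2\nub|\xi|^2|\wh v|^2-\ep|\xi|^2|\wh w|^2+\ep|\xi|^2|\wh v|^2+\ep\eta\nub|\xi|^2\,{\rm Re}\bigl(\wh w\,\overline{\wh v}\bigr)+\cR,$$
where $\cR$ gathers the $\wh f,\wh h$ contributions and obeys $|\cR|\lesssim\bigl((1+\kab|\xi|^2)^{1/2}|\wh f|+|\wh h|\bigr)\sqrt{\cL}$. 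Bounding the cross term by Young's inequality (using $\eta^2\leq\kab^{-1}$) and then taking $\ep$ so small that $\ep(1+\nub^2/\kab)\leq\nub$, the dissipative terms dominate, so $\frac{d}{dt}\cL\leq-2c_0|\xi|^2\cL+C\bigl(|\wh f|+|\xi|\,|\wh f|+|\wh g|\bigr)\sqrt{\cL}$ for a suitable $c_0=c_0(\mub,\kab)$; dividing by $2\sqrt{\cL}$ and applying Gronwall's lemma bounds $\sqrt{\cL(t,\xi)}$. Reverting to the original unknowns via $\cL\thickapprox|\wh a|^2+|\xi|^2|\wh a|^2+|\wh v|^2$ (with constants depending only on $\kab$), using $|\wh h|\leq|\wh g|$ and $|\wh u|\leq|\wh v|+|\wh{u^\perp}|$, and finally replacing $c_0$ by $\min(c_0,\mub)$ so that one exponential rate governs both the potential and the solenoidal contributions, one recovers the asserted inequality.

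I expect the main obstacle to be the calibration of the weight $\eta$ (and of $\ep$) so that the damping rate in $\cL$ is uniformly comparable to $|\xi|^2$ across \emph{all} frequencies, despite the degeneracy $\sigma\sim|\xi|$ at low frequencies and the growth $\sigma\sim|\xi|^2$ at high frequencies of the capillary "sound speed'' $\sigma$; one must also keep careful track of which constant depends on what, so that the multiplicative constant ends up depending on $\kab$ alone while all the $\mub$-dependence is absorbed into $c_0$. The remaining steps — the Fourier/Leray reduction, the algebra of the $2\times2$ system, and the Duhamel–Gronwall bookkeeping — are routine.
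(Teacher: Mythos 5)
Your proposal is correct and follows essentially the same route as the paper: Leray projection reduces the solenoidal part to a heat equation, and the pair $(a,\Lambda^{-1}\div u)$ is handled by a frequency-wise Lyapunov functional consisting of the capillarity-weighted energy plus a small multiple of the cross term ${\rm Re}(\wh a\,\ov{\wh v})$ scaled by $|\xi|$, yielding $\frac{d}{dt}\cL+c|\xi|^2\cL\leq 0$ up to the forcing. Your symmetrization via $\wh w=(1+\kab|\xi|^2)^{1/2}\wh a$ and the weight $\eta$ is only a cosmetic repackaging of the paper's choice $\mathcal{L}^2_{|\xi|}=(1+\kab|\xi|^2)|\wh a|^2+|\wh v|^2+\beta\bigl(|\xi|^2|\wh a|^2-2|\xi|{\rm Re}(\wh a\,\ov{\wh v})\bigr)$ with $\beta=\tfrac12$, and carrying $f,g$ through Gronwall rather than invoking Duhamel at the outset changes nothing.
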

\begin{proof} 
 It is mainly a matter of adapting to System \eqref{R-E3} the energy argument of Godunov \cite{G} for partially dissipative first-order symmetric systems (further developed by Kawashima in e.g. \cite{Ka1}).

Note that taking advantage of the Duhamel formula reduces the proof 
to  the case where $f\equiv0$ and $g\equiv0.$
Now, applying  to the second equation of \eqref{R-E3} the Leray projector on divergence free vector fields
yields
\begin{equation}\label{R-E4}
\partial_{t}\cP u-\mub\Delta\cP u=0,
\end{equation}
from which we readily get, after taking the (space) Fourier transform,  
\begin{equation}\label{R-E28}
|\wh{\cP u}(t)|\leq e^{-\mub|\xi|^2t}|\wh{\cP u}(0)|.
\end{equation}

In order to prove the desired inequality for $a$ and the gradient part of the velocity, 
it is convenient to introduce the function  $v\triangleq\Lambda^{-1}\div u$ (with $\Lambda^{s}z\triangleq\mathcal{F}^{-1}\left(\left|\xi\right|^{s}\mathcal{F}z\right)$ for $s\in\R$) .
Then, we discover that $(a,v)$ satisfies (recall that $2\mub+\lambb=1$)
\begin{equation}\label{R-E5}
\left\{\begin{array}{l}\d_ta+\Lambda v=0,\\[1ex]
\d_tv-\Delta v-\Lambda a-\kab \Lambda^3 a=0.
\end{array}\right.
\end{equation}
Hence, taking the Fourier transform of both sides of \eqref{R-E5} gives
\begin{equation}\label{R-E18}
\left\{\begin{array}{l}\frac{d}{dt}\wh a+|\xi|\wh v=0,\\[1ex]
\frac{d}{dt}\wh v+|\xi|^2\wh v-|\xi|(1+\kab|\xi|^2)\wh a=0.
\end{array}\right.
\end{equation}
Multiplying the first equation in \eqref{R-E18} by the conjugate $\overline{\wh a}$ of $\wh a,$ 
and the second one by $\overline{\wh v}$, we get
\begin{equation}\label{R-E19}
\frac12\frac{d}{dt}|\wh a|^2+|\xi| \mathrm{Re}(\wh a\,\ov{\wh v})=0
\end{equation}
and, because  $\mathrm{Re}(\wh a\,\ov{\wh v})=\mathrm{Re}(\ov{\wh a}\,\wh v),$
\begin{equation}\label{R-E20}
\frac12\frac{d}{dt}|\wh v|^2+|\xi|^2|\wh v|^2-|\xi|(1+\kab|\xi|^2)\mathrm{Re}(\wh a\,\ov{\wh v})=0.
\end{equation} 
Multiplying \eqref{R-E19} by  $(1+\kab|\xi|^2),$ and adding up to \eqref{R-E20} yields
\begin{equation}\label{R-E21}
\frac12\frac{d}{dt}\bigl((1+\kab|\xi|^2)|\wh a|^2+|\wh v|^2\bigr)+|\xi|^2|\wh v|^2=0.
\end{equation}
In order to track the dissipation arising for $a,$ let us multiply the first and second equations of \eqref{R-E18} by $-|\xi|\ov{\wh v}$ and $-|\xi|\ov{\wh a},$ respectively. Adding them, we get:
\begin{equation}\label{R-E23}
\frac{d}{dt}\bigl(-|\xi|\mathrm{Re}(\wh a\,\ov{\wh v})\bigr)-|\xi|^3\mathrm{Re}(\ov{\wh a}\wh v)+|\xi|^2(1+\kab|\xi|^2)|\wh a|^2 -|\xi|^2 |\wh v|^2=0.
\end{equation}
Adding to this $|\xi|^2$\eqref{R-E19} yields
\begin{equation}\label{R-E24}
\frac12\frac{d}{dt}\bigl(|\xi|^2 |\wh a|^2-2|\xi|\mathrm{Re}(\wh a\,\ov{\wh v})\bigr)
+|\xi|^2(1+\kab|\xi|^2)|\wh a|^2 -|\xi|^2 |\wh v|^2=0.
\end{equation}
Therefore, by multiplying \eqref{R-E24} by a small enough constant $\beta>0$ (to be determined later) and adding it to \eqref{R-E21}, we get
$$\displaylines{
\qquad\frac12\frac{d}{dt}\mathcal{L}^2_{|\xi|}(t)+\beta|\xi|^2(1+\kab|\xi|^2)|\wh a|^2+(1-\beta)|\xi|^2|\wh v|^2=0,\hfill\cr\hfill
\with\mathcal{L}^2_{|\xi|}(t)\triangleq(1+\kab|\xi|^2)|\wh a|^2+|\wh v|^2+\beta \bigl(|\xi|^2 |\wh a|^2-2|\xi|\mathrm{Re}(\wh a\,\ov{\wh v})\bigr).\qquad}$$

Choosing $\beta=\frac12$ we have $\mathcal{L}^2_{|\xi|}\approx |(\wh a,|\xi| \wh a, \wh v)|^2$ and using the Cauchy-Schwarz inequality, we deduce that
 there exists a positive constant $c_1$ such that on $\R_+,$ we have
$$
\frac{d}{dt}\mathcal{L}^2_{|\xi|}+c_{1}|\xi|^2 \mathcal{L}^2_{|\xi|}\leq 0,
$$
which leads, after time integration, to\footnote{If one tracks the constants
then we get  $c_1=\frac{1}{2} \min(1, \kab)\quad \mbox{and }C=\frac{\max(\frac32,\kab+1)}{\min(\frac12,\kab)}\cdotp$}
\begin{equation}\label{R-E27}
|(\wh a, |\xi| \wh a, \wh v)(t)|\leq Ce^{-c_{1}|\xi|^2t}|(\wh a, |\xi| \wh a, \wh v)(0)|.
\end{equation}
Putting together with \eqref{R-E28} completes the proof of the lemma in the case 
$f\equiv0$ and $g\equiv0.$ The general case readily stems from Duhamel formula.
\end{proof}

We shall also need the following two results that  have been proved in \cite{BBT}.
\begin{lem} \label{lem3.2}
The kernel of operator $M_1:=e^{-[\sqrt{t-\tau}+\sqrt{\tau}-\sqrt{t}]\Lambda_1}$ with $0<\tau < t$
is integrable, and has a  $L^1$ norm that may be bounded independently  of $\tau$ and $t$.
\end{lem}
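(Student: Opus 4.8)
The plan is to estimate the kernel of $M_1$ directly in physical space by exploiting the product structure coming from the $\ell^1$ symbol. Since $|\xi|_1=\sum_{i=1}^d|\xi_i|$, the operator $e^{-[\sqrt{t-\tau}+\sqrt\tau-\sqrt t\,]\Lambda_1}$ factors as a tensor product $\bigotimes_{i=1}^d e^{-s|\partial_i|}$ with $s\triangleq\sqrt{t-\tau}+\sqrt\tau-\sqrt t\geq0$ (the sign being a consequence of the concavity of $r\mapsto\sqrt r$, i.e. $\sqrt{a}+\sqrt b\geq\sqrt{a+b}$). Hence its kernel is the $d$-fold product $K_s(x)=\prod_{i=1}^d P_s(x_i)$, where $P_s$ is the one-dimensional kernel with symbol $e^{-s|\zeta|}$, namely the Poisson kernel $P_s(y)=\frac1\pi\frac{s}{s^2+y^2}$ for $s>0$ (and $P_0=\delta_0$). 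First I would record that $\|P_s\|_{L^1(\R)}=1$ for every $s\geq0$, uniformly in $s$.

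Next, by Fubini, $\|K_s\|_{L^1(\R^d)}=\prod_{i=1}^d\|P_s\|_{L^1(\R)}=1$, uniformly in $s\geq0$, hence uniformly in $0<\tau<t$. This simultaneously proves that the kernel is integrable and that its $L^1$ norm is bounded by a constant independent of $\tau$ and $t$ (in fact the constant is $1$). Equivalently, and this is the form actually used later, for any Banach space $X$ the operator $M_1$ acts on $X$-valued functions with operator norm $\leq1$, by Young's convolution inequality.

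The only genuine point requiring care is the nonnegativity of the exponent $s$: if $\sqrt{t-\tau}+\sqrt\tau-\sqrt t$ were negative, $e^{-s\Lambda_1}$ would be an unbounded multiplier with a non-integrable (indeed non-tempered) kernel, and the statement would fail. So the key step is the elementary inequality $\sqrt{t-\tau}+\sqrt\tau\geq\sqrt{t}$ for $0<\tau<t$, which follows from squaring: $(t-\tau)+\tau+2\sqrt{(t-\tau)\tau}=t+2\sqrt{(t-\tau)\tau}\geq t$. I do not expect any real obstacle here; the lemma is purely a convolution-kernel computation, and the substance of the paper lies in how it is combined with Lemma \ref{lem3.1} in the nonlinear Gevrey estimates. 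For completeness one notes that when $\tau\to0^+$ or $\tau\to t^-$ one has $s\to0$ and $K_s\to\delta_0$ in the sense of measures, consistently with the uniform bound.
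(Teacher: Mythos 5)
Your proof is correct: the key points (nonnegativity of the exponent $s=\sqrt{t-\tau}+\sqrt{\tau}-\sqrt{t}$ via concavity of the square root, and the factorization of the kernel into one-dimensional Poisson kernels each of unit $L^1$ norm) are exactly the standard argument, which the paper delegates to \cite{BBT} and in fact reproduces itself for the operator $e^{-\alpha\Lambda_1}$ when computing $h_\alpha=\cF^{-1}(e^{-\alpha|\cdot|_1})$ in the proof of the decay estimate \eqref{eq:expdecay}. Nothing is missing.
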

\begin{lem} \label{lem3.3}
The operator $M_2:=e^{\frac12 a\Delta+\sqrt{a}\Lambda_{1}}$ is a Fourier multiplier which maps boundedly $L^p$ to $L^p$ for all  $1<p<\infty.$ Furthermore, its operator norm is uniformly bounded with respect to $a\geq0$.
\end{lem}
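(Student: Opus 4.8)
The plan is to exploit the \emph{product structure} of the symbol of $M_2$, first to reduce the statement to a single value of $a$ by a scaling argument, and then to a one-dimensional multiplier having only a mild corner singularity at the origin.

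\textbf{Step 1 (reduction by scaling).} The Fourier symbol of $M_2$ is
$$m_a(\xi)=e^{-\frac12 a|\xi|^2+\sqrt a\,|\xi|_1},\qquad \xi\in\R^d ,$$
which is bounded (and rapidly decaying) for each fixed $a\geq0$ thanks to the quadratic term coming from $\frac12 a\Delta$. Since $|\lambda\xi|_1=\lambda|\xi|_1$ and $|\lambda\xi|^2=\lambda^2|\xi|^2$ for $\lambda\geq0$, we have the identity $m_a(\xi)=m_1(\sqrt a\,\xi)$. Because the $L^p(\R^d)$ operator norm of a Fourier multiplier is invariant under dilations of the symbol (conjugating $m(D)$ by $f\mapsto f(\lambda\,\cdot)$ produces a factor $\lambda^{d/p}$ and a factor $\lambda^{-d/p}$ that cancel), it follows that $\|M_2\|_{L^p\to L^p}=\|m_1(D)\|_{L^p\to L^p}$ for every $a>0$, while $M_2=\Id$ when $a=0$. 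Hence it suffices to prove that the \emph{fixed} multiplier $m_1(D)$ is bounded on $L^p(\R^d)$, which will automatically yield the uniformity in $a$.

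\textbf{Step 2 (tensorization).} Since $|\xi|^2=\sum_{i=1}^d\xi_i^2$ and $|\xi|_1=\sum_{i=1}^d|\xi_i|$, the symbol factorizes as
$$m_1(\xi)=\prod_{i=1}^d\psi(\xi_i),\qquad \psi(s):=e^{-\frac12 s^2+|s|}.$$
As $\psi\in L^1(\R)$, we have $m_1\in L^1(\R^d)$ and $\mathcal F^{-1}m_1=\bigotimes_{i=1}^d\mathcal F^{-1}\psi$, so that $\|\mathcal F^{-1}m_1\|_{L^1(\R^d)}=\|\mathcal F^{-1}\psi\|_{L^1(\R)}^d$. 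Consequently, once we check that $\wh\psi\in L^1(\R)$, the operator $m_1(D)$ is convolution with the fixed $L^1(\R^d)$ kernel $\mathcal F^{-1}m_1$, hence bounded on $L^p(\R^d)$ for \emph{every} $1\leq p\leq\infty$ by Young's inequality, with norm $\lesssim\|\wh\psi\|_{L^1(\R)}^d$; a fortiori the range $1<p<\infty$ claimed in the lemma is covered.

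\textbf{Step 3 (integrability of $\wh\psi$) and the main point.} Writing $\psi=e^{g}$ with $g(s)=-\frac12 s^2+|s|$, one has $g'(s)=-s+\sgn s$ on $\R\setminus\{0\}$ and $g''=-1+2\delta_0$ in the distributional sense, whence $\psi''=\bigl((g')^2+g''\bigr)\psi=2\delta_0+h$ with $h(s)=(s^2-2|s|)\,e^{-\frac12 s^2+|s|}\in L^1(\R)\cap L^\infty(\R)$. Therefore $|\xi|^2|\wh\psi(\xi)|=|\wh{\psi''}(\xi)|\leq 2+\|h\|_{L^1(\R)}$, and together with the trivial bound $|\wh\psi(\xi)|\leq\|\psi\|_{L^1(\R)}$ this gives $|\wh\psi(\xi)|\lesssim(1+\xi^2)^{-1}$, so $\wh\psi\in L^1(\R)$, which closes the argument. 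The only subtlety — and the reason we do not simply invoke Mikhlin--H\"ormander for $m_a$ directly — is the lack of smoothness of $\xi\mapsto|\xi|_1$ along the coordinate hyperplanes; the product structure localizes this to the single corner of $\psi$ at $s=0$, which is harmless because it contributes only a Dirac mass to $\psi''$ and still leaves $\wh\psi$ integrable. (Alternatively one may run a one-dimensional Mikhlin argument, noting that $\bigl|(s^2-|s|)\,\psi(s)\bigr|=|s\psi'(s)|$ is bounded, and tensorize; the route via $\wh\psi\in L^1$ has the advantage of giving an explicit, $a$-independent bound on all of $1\leq p\leq\infty$.)
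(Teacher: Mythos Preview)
Your proof is correct. The paper itself does not prove this lemma but simply cites \cite{BBT}; your argument supplies a clean, self-contained justification. The scaling reduction (Step~1) and the tensorization (Step~2) are precisely what make the $\ell^1$ norm $|\xi|_1$ so convenient here compared with the Euclidean norm, and your treatment of the corner of $\psi$ at the origin via the distributional identity $\psi''=2\delta_0+h$ with $h\in L^1\cap L^\infty$ is the right way to extract the decay $|\wh\psi(\xi)|\lesssim(1+\xi^2)^{-1}$. Since you end up with an integrable convolution kernel, your proof in fact yields boundedness on all of $1\leq p\leq\infty$, slightly more than the lemma claims.
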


Proving the Gevrey regularity of our solutions will be based on continuity results for the family $(B_t)_{t\geq0}$ of bilinear operators defined by 
$$\begin{aligned}
B_t(f,g)(x)&=\Bigl(\Gop(\Gopm f \cdot \Gopm g)\Bigr)(x)\\ &=\frac1{(2\pi)^{2d}}\int_{\mathbb{R}^d}\int_{\mathbb{R}^d}e^{\mathrm{i}x\cdot (\xi+\eta)}e^{\sqrt{c_0 t}(|\xi+\eta|_1-|\xi|_1-|\eta|_1)}\hat{f}(\xi)\hat{g}(\eta)\,d\xi \,d\eta.\end{aligned}$$
Following \cite{BBT} and \cite{L-book}, we   introduce the following operators acting on functions depending on one real variable:
$${K}_1f\triangleq\frac{1}{2\pi}\int^{\infty}_{0}e^{\mathrm{i}x\xi}\hat{f}(\xi)\,d\xi\andf {K}_{-1}f\triangleq\frac{1}{2\pi}\int^{0}_{-\infty}e^{\mathrm{i}x\xi}\hat{f}(\xi)\,d\xi,$$
and define  $L_{a,1}$ and $L_{a,-1}$ as follows:
$$L_{a,1} f\triangleq f\andf L_{a,-1}f\triangleq
\frac{1}{2\pi}\int_{\mathbb{R}}e^{\mathrm{i}x\xi}e^{-2a|\xi|}\hat{f}(\xi)\,d\xi.$$
For $t\geq0,$ $\alpha=(\alpha_1,\alpha_2,\cdot\cdot\cdot, \alpha_d)$ and $\beta=(\beta_1,\beta_2,\cdot\cdot\cdot, \beta_d)\in \{-1,1\}^{d}$, we set 
$$Z_{t,\alpha,\beta}\triangleq K_{\beta_1}L_{\sqrt{c_0t},\alpha_1\beta_1}\otimes\cdot\cdot\cdot \otimes K_{\beta_d}L_{\sqrt{c_0t},\alpha_d\beta_d}\andf K_{\alpha}\triangleq K_{\alpha_1}\otimes\cdot\cdot\cdot \otimes K_{\alpha_d}.$$
Then we see that 
\begin{equation}
B_t(f,g)=\sum_{(\alpha,\beta,\gamma)\in (\{-1,1\}^d)^3}K_{\alpha}(Z_{t,\alpha,\beta}f Z_{t,\alpha,\gamma}g).
\label{Btsum}
\end{equation}

Since  operators $K_{\alpha}$ and  $Z_{t,\alpha,\beta}$ are linear combinations of smooth homogeneous of degree zero Fourier multipliers, they are bounded on $L^p$ for any $1<p<\infty$ (but they need not
be bounded in $L^1$ and $L^\infty$).  
Furthermore, they commute with all Fourier multipliers 
and thus in particular  with  $\Lambda_1$ and  with  the Littlewood-Paley 
cut-off  operators $\ddj.$
We also have the following fundamental  result:
\begin{lem}\label{lem3.4}
For any $1<p,p_1,p_2<\infty$ with $\frac1p=\frac1{p_1}+\frac1{p_2},$
we have for some constant $C$ independent of $t\geq0,$
$$\|B_t(f,g)\|_{L^p}\leq C\|f\|_{L^{p_1}}\|g\|_{L^{p_2}}.$$
\end{lem}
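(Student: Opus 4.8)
The plan is to exploit the decomposition \eqref{Btsum} of $B_t$ into finitely many pieces of the form $K_\alpha(Z_{t,\alpha,\beta}f\cdot Z_{t,\alpha,\gamma}g)$ and to bound each of these pieces uniformly in $t$. Since the index set $(\{-1,1\}^d)^3$ is finite, it suffices to obtain, for each fixed triple $(\alpha,\beta,\gamma)$, an estimate of the type
$$\bigl\|K_\alpha(Z_{t,\alpha,\beta}f\cdot Z_{t,\alpha,\gamma}g)\bigr\|_{L^p}\leq C\|f\|_{L^{p_1}}\|g\|_{L^{p_2}}$$
with $C$ independent of $t\geq0$. First I would peel off the outer operator $K_\alpha$: it is a linear combination of smooth homogeneous-degree-zero Fourier multipliers, hence bounded on $L^p$ for $1<p<\infty$ by the Mikhlin–H\"ormander multiplier theorem (this is exactly the observation recorded just before the statement), and its norm does not depend on $t$. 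So the matter reduces to controlling $\|Z_{t,\alpha,\beta}f\cdot Z_{t,\alpha,\gamma}g\|_{L^p}$, and then by H\"older's inequality with $\frac1p=\frac1{p_1}+\frac1{p_2}$ it is enough to show that $Z_{t,\alpha,\beta}$ is bounded on $L^{p_1}$ and $Z_{t,\alpha,\gamma}$ is bounded on $L^{p_2}$, uniformly in $t$.

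The core of the argument is therefore the uniform $L^p$-boundedness of the one-parameter family $Z_{t,\alpha,\beta}$, and this is where Lemmas \ref{lem3.2} and \ref{lem3.3} enter. Because $Z_{t,\alpha,\beta}$ is a tensor product over the $d$ coordinate directions, $Z_{t,\alpha,\beta}=\bigotimes_{i=1}^d K_{\beta_i}L_{\sqrt{c_0t},\alpha_i\beta_i}$, and since each one-dimensional factor acts on its own variable, it is enough to bound each factor $K_{\beta_i}L_{\sqrt{c_0t},\alpha_i\beta_i}$ on $L^p(\R)$ uniformly in $t$ and then invoke the fact that a tensor product of operators bounded on $L^p$ of each factor is bounded on $L^p$ of the product space (with norm the product of the norms) — for $1<p<\infty$ this follows from iterated application of the one-dimensional bound in each variable via Fubini/Minkowski. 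For a single factor there are two cases. If $\alpha_i\beta_i=1$ then $L_{\sqrt{c_0t},1}=\mathrm{Id}$, so the factor is just $K_{\beta_i}$, a homogeneous-degree-zero multiplier in one variable, bounded on $L^p(\R)$ with $t$-independent norm. If $\alpha_i\beta_i=-1$ then the factor is $K_{\beta_i}L_{\sqrt{c_0t},-1}$, whose symbol is the indicator of a half-line times $e^{-2\sqrt{c_0t}|\xi|}$; one rewrites this symbol so as to recognize it as (a piece of) an operator covered by Lemma \ref{lem3.3} — writing $e^{-2\sqrt{c_0t}|\xi|}=e^{\frac12 a\Delta+\sqrt a\,\Lambda_1}$ up to the extra Gaussian factor $e^{-\frac12 a|\xi|^2}$ which is itself a uniformly bounded Fourier multiplier (heat semigroup), with $a\thickapprox t$ — the point being that Lemma \ref{lem3.3} gives precisely the uniform-in-$a$ bound needed here. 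One should double-check the exact bookkeeping so that the combination $\sqrt{c_0t}(|\xi+\eta|_1-|\xi|_1-|\eta|_1)$ is correctly distributed: expanding it coordinatewise and sorting signs according to $(\alpha,\beta,\gamma)$ is exactly what produces the operators $Z_{t,\alpha,\beta}$ in \eqref{Btsum}, and the key algebraic inequality in the background is $|x+y|-|x|-|y|\leq 0$ together with the decomposition $|x+y|-|x|-|y|=-2\min(|x|,|y|)$ or $0$ depending on signs, which is what makes the exponentials \emph{decaying} rather than growing.

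I expect the main obstacle to be the rigorous justification of the uniform boundedness of the factors with $\alpha_i\beta_i=-1$, i.e. showing that the symbol $\mathbf 1_{\{\beta_i\xi_i>0\}}e^{-2\sqrt{c_0t}|\xi_i|}$ defines an operator on $L^p(\R)$ with norm independent of $t$: a naive bound through the $L^1$ norm of the kernel fails because the half-line cutoff $K_{\beta_i}$ is not $L^1$-bounded, so one genuinely needs to combine it with $L_{\sqrt{c_0t},-1}$ and treat them as a single unit fitting the hypotheses of Lemma \ref{lem3.3} (equivalently Lemma \ref{lem3.2} at the level of kernels, after absorbing the analytic-smoothing exponential). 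In other words, the subtlety is that neither $K$ nor $L$ alone is harmless — $K$ is not $L^1$-bounded and the $L$-multiplier is not smooth at the origin uniformly in $t$ — but their product, together with the accompanying Gaussian, is, and the already-cited Lemmas \ref{lem3.2}--\ref{lem3.3} from \cite{BBT} are exactly tailored to this. Once those uniform one-variable, hence (by tensorization) $d$-variable, bounds are in hand, the proof closes by the chain: $L^p$-boundedness of $K_\alpha$, H\"older's inequality, and uniform $L^{p_1}$- and $L^{p_2}$-boundedness of the $Z$'s, summed over the finitely many $(\alpha,\beta,\gamma)$.
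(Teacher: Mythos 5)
Your overall architecture is exactly the one the paper relies on (the paper itself does not write out a proof of Lemma \ref{lem3.4}; it records the boundedness of $K_\alpha$ and $Z_{t,\alpha,\beta}$ in the paragraph preceding the statement and refers to \cite{BBT}): split $B_t$ via \eqref{Btsum} into finitely many terms, remove $K_\alpha$ by $L^p$-boundedness of degree-zero multipliers, apply H\"older, and reduce to uniform-in-$t$ bounds for the tensorized one-dimensional factors $K_{\beta_i}L_{\sqrt{c_0t},\alpha_i\beta_i}$. The tensorization and H\"older steps are fine. However, the step you single out as the crux --- the uniform bound for the factors with $\alpha_i\beta_i=-1$ --- is handled incorrectly. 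Your proposed reduction to Lemma \ref{lem3.3} does not close: the symbol of $M_2=e^{\frac12 a\Delta+\sqrt a\Lambda_1}$ is $e^{-\frac12 a|\xi|^2+\sqrt a|\xi|_1}$, so to recover $e^{-2\sqrt{c_0t}|\xi|}$ from it you must multiply by $e^{+\frac12 a|\xi|^2-\sqrt a|\xi|_1-2\sqrt{c_0t}|\xi|}$, i.e.\ by the \emph{inverse} heat semigroup, which is unbounded; the ``extra Gaussian factor'' goes the wrong way. Lemmas \ref{lem3.2}--\ref{lem3.3} are in fact not needed for Lemma \ref{lem3.4} at all; they enter later, in the proof of Lemma \ref{lem3.6}, to control the interaction of the Gevrey weight with the heat flow along the Duhamel formula.

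Moreover, your diagnosis that ``a naive bound through the $L^1$ norm of the kernel fails'' and that $K$ and $L$ must be treated as a single unit is backwards. The operator $L_{a,-1}$ alone is harmless: its symbol is $e^{-2a|\xi|}$, whose kernel is the Poisson kernel $\frac1\pi\,\frac{2a}{(2a)^2+x^2}$, nonnegative with unit mass, so $\|L_{a,-1}\|_{L^q\to L^q}\le 1$ for every $1\le q\le\infty$, \emph{uniformly} in $a\ge0$ (this is the same computation the paper carries out for $h_\alpha=\cF^{-1}(e^{-\alpha|\cdot|_1})$ in the proof of the decay lemma in Section \ref{sec:5}). And $K_{\beta_i}=\frac12(\Id+\mathrm{i}\beta_i H)$ up to normalization, with $H$ the Hilbert transform, is bounded on $L^q(\R)$ for $1<q<\infty$ with a $t$-independent norm. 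Composing these two separate bounds gives the uniform $L^q$-boundedness of each factor $K_{\beta_i}L_{\sqrt{c_0t},\alpha_i\beta_i}$ directly, and the rest of your argument (tensorization, H\"older with $\frac1p=\frac1{p_1}+\frac1{p_2}$, finite sum over $(\alpha,\beta,\gamma)$) then goes through verbatim. So the gap is repairable in one line, but as written the justification of the key uniform estimate is not valid.
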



\subsection{Results of continuity for the paraproduct, remainder and composition}

The aim of this section is to establish the nonlinear estimates 
involving Besov Gevrey regularity that will be needed to bound
the right-hand side of \eqref{R-E3}. We shall actually 
prove  more general estimates both because they are of independent 
interest and since they will be used in the next section, when we shall 
generalize the statement of Theorem \ref{thm3.1} to $L^p$ related Besov spaces.

The first part of this subsection will be devoted to product estimates, and will require 
our using Bony's decomposition and to prove new continuity results for 
the paraproduct and remainder operators. 

Recall that, at the formal level,  the  product  of two  tempered distributions $f$ and $g$   may be decomposed into
\begin{equation}\label{R-E11}
fg=T_fg+T_gf+R(f,g)\end{equation}
with
$$T_fg=\sum_{j\in\Z} \dot{S}_{j-1}f\,\ddj g
\ \hbox{ and }\
R(f,g)=\sum_{j\in\Z}\sum_{|j'-j|\leq1}\ddj f\,\dot\Delta_{j'}g.
$$
The above operators $T$ and $R$ are called ``paraproduct'' and  ``remainder,'' respectively.
The decomposition \eqref{R-E11} has been first introduced
by J.-M. Bony in \cite{Bony}. The paraproduct and remainder operators
possess a lot of continuity properties in Besov spaces (see Chap. 2 in \cite{BCD}), 
which motivates their introduction here.
\medbreak
{}From now on and for notational simplicity, we agree   that   $F(t)\triangleq \Gop f$ for $t\geq0$ 
(and  dependence on $t$  will be often omitted).
\smallbreak
Let us start with paraproduct and remainder estimates in the case where all the Lebesgue indices lie in the range $]1,\infty[.$
\begin{prop}\label{prodlaws1}
\sl{
 Let $s\in \mathbb{R}$ and $1<p<\infty$, $1\leq p_1,p_2,r,r_1,r_2 \leq \infty$ with $1/p=1/p_1+1/p_2$ and $1/r=1/r_1+1/r_2$. If $1<p,p_1,p_2< \infty$, then there exists a constant $C$ such that for any $f,g$ and $\sigma>0$ (or $\sigma\geq 0$ if $r_1=1$),
  \begin{equation}\label{R-E30}
 \|\Gop T_f g\|_{\dot{B}^{s-\sigma}_{p,r}}\leq C \|F\|_{\dot{B}^{-\sigma}_{p_1,r_1}}\|G\|_{\dot{B}^{s}_{p_2,r_2}},
 \end{equation}
 and for any $s_1,s_2 \in \R$ with $s_1+s_2>0$,
\begin{equation}\label{R-E32}
  \|\Gop R(f,g)\|_{\dot{B}^{s_1+s_2}_{p,r}}\leq C \|F\|_{\dot{B}^{s_1}_{p_1,r_1}}\|G\|_{\dot{B}^{s_2}_{p_2,r_2}}.
\end{equation}}
\end{prop}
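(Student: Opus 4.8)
The plan is to reduce the two Gevrey-weighted estimates \eqref{R-E30} and \eqref{R-E32} to their classical counterparts (Chap. 2 of \cite{BCD}) by exploiting the decomposition \eqref{Btsum} of the Gevrey-conjugated bilinear operator together with Lemma \ref{lem3.4} (or more precisely, the one-variable building blocks behind it). The key algebraic observation is that since $\dot S_{j-1}f$ involves only frequencies $\lesssim 2^j$ while $\ddj g$ is frequency-localized near $2^j$, the output $\ddj(T_fg)$ lives in a dyadic annulus of size $2^j$; hence on the support of the relevant cut-offs the weight $e^{\sqrt{c_0t}|\xi+\eta|_1}$ is comparable to $e^{\sqrt{c_0t}|\eta|_1}$ up to the controllable factor coming from the low-frequency piece.

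First I would write, for each dyadic block,
$$\ddj\bigl(\Gop T_fg\bigr)=\ddj\Bigl(\sum_{|j-j'|\le N_0}\Gop\bigl(\dot S_{j'-1}f\,\ddj[g]\bigr)\Bigr),$$
and inside each term insert the identity $\Gop=\Gop\Gopm\Gop$ applied separately to the two factors, so as to expose $F=\Gop f$ and $G=\Gop g$. The commutator/kernel estimates of Lemma \ref{lem3.2} (and the abstract boundedness of $K_\alpha$, $Z_{t,\alpha,\beta}$ on $L^p$, $1<p<\infty$) then allow me to absorb the ``mismatch'' operator $\Gop\Gopm\otimes\Gopm$ acting on a product, exactly as in the proof of Lemma \ref{lem3.4}: decompose via \eqref{Btsum} into a finite sum over $(\alpha,\beta,\gamma)$ of terms $K_\alpha\bigl(Z_{t,\alpha,\beta}(\dot S_{j'-1}F)\cdot Z_{t,\alpha,\gamma}(\ddj G)\bigr)$, apply H\"older with $1/p=1/p_1+1/p_2$, and use that $K_\alpha$, $Z_{t,\alpha,\beta}$ are bounded on $L^{p}$, $L^{p_1}$, $L^{p_2}$ respectively \emph{uniformly in $t$}. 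This is precisely where the hypothesis $1<p,p_1,p_2<\infty$ is used; the indices $r,r_1,r_2$ play no role at this stage and only enter through the final $\ell^r$ summation in $j$.

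Having done this, the proof is reduced to the classical paraproduct and remainder estimates applied to $F$ and $G$: for \eqref{R-E30} one bounds $\|\dot S_{j'-1}F\|_{L^{p_1}}\lesssim \sum_{k\le j'-2}2^{k\sigma}2^{-k\sigma}\|\ddk F\|_{L^{p_1}}$, which by the condition $\sigma>0$ (or $\sigma\ge 0$ if $r_1=1$) is summable and controlled by $2^{j'\sigma}c_{j'}\|F\|_{\dot B^{-\sigma}_{p_1,r_1}}$ with $(c_{j'})\in\ell^{r_1}$; combining with $\|\ddj G\|_{L^{p_2}}\lesssim 2^{-js}d_j\|G\|_{\dot B^s_{p_2,r_2}}$, $(d_j)\in\ell^{r_2}$, and then taking $\ell^r$ norms via Bony's standard argument gives the claim. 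The remainder estimate \eqref{R-E32} is handled the same way using $s_1+s_2>0$ for the convergence of the diagonal sum. The one genuine point requiring care — and the step I expect to be the main obstacle — is checking that the weight rearrangement $e^{\sqrt{c_0t}|\xi+\eta|_1}$ versus $e^{\sqrt{c_0t}|\xi|_1}e^{\sqrt{c_0t}|\eta|_1}$ is handled by an operator \emph{uniformly} bounded in $t$ on the relevant $L^{p_i}$ spaces, which is exactly the content of Lemmas \ref{lem3.2}–\ref{lem3.4} and the tensorized structure \eqref{Btsum}; once that is granted, everything else is a transcription of the classical proof with $f,g$ replaced by $F,G$.
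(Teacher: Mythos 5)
Your proposal is correct and follows essentially the same route as the paper: write $\ddj(\Gop T_fg)$ (resp.\ $\ddk\Gop R(f,g)$) as $B_t(\dot S_{j-1}F,\ddj G)$-type blocks, invoke the $L^{p_1}\times L^{p_2}\to L^p$ boundedness of $B_t$ uniformly in $t$ (Lemma \ref{lem3.4}, proved via \eqref{Btsum} and the $L^p$-boundedness of $K_\alpha$, $Z_{t,\alpha,\beta}$ for $1<p<\infty$), and then run the classical Bony summation using $\sigma>0$ (or $\sigma\ge0$, $r_1=1$) and $s_1+s_2>0$. The only cosmetic slips are the appeal to Lemma \ref{lem3.2} (which concerns the Duhamel kernel, not the product laws) and the heuristic that $e^{\sqrt{c_0t}|\xi+\eta|_1}\approx e^{\sqrt{c_0t}|\eta|_1}$ on the paraproduct supports, neither of which is needed once Lemma \ref{lem3.4} is granted.
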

In order to prove our main results for the Korteweg system, we will need sometimes
the estimates corresponding to the case $p_2=p$ that are contained in  the following statement.
\begin{prop}\label{prodlaws2}
\sl{Assume that $1<p,q<\infty$ and that $1\leq r,r_1,r_2$ fulfill  $1/r=1/r_1+1/r_2$.
There exists a constant $C$ such that for any $f,g$ and $\sigma>0$ (or $\sigma\geq 0$ if $r_1=1$),
  \begin{equation}\label{R-E30b}
 \|\Gop T_f g\|_{\dot{B}^{s-\sigma}_{p,r}}\leq C \|F\|_{\dot{B}^{\frac{d}{q}-\sigma}_{q,r_1}}\|G\|_{\dot{B}^{s}_{p,r_2}},
 \end{equation}
 and for any $s_1,s_2 \in \R$ with $s_1+s_2>0$,
\begin{equation}\label{R-E32b}
  \|\Gop R(f,g)\|_{\dot{B}^{s_1+s_2}_{p,r}}\leq C \|F\|_{\dot{B}^{s_1+\frac{d}{q}}_{q,r_1}}\|G\|_{\dot{B}^{s_2}_{p,r_2}}.
\end{equation}
}
\end{prop}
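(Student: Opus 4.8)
\textbf{Plan for the proof of Proposition \ref{prodlaws2}.}
The strategy is to mimic the proof of Proposition \ref{prodlaws1}, but to play on the fact that here the target Lebesgue index in the second factor is the same as the output index $p$, so that no Hölder splitting is needed in the Lebesgue variable for the "low" factor of each product. First I would recall the usual nonstandard feature of the Gevrey setting: since $\Lambda_1$ has symbol $|\xi|_1$ which is \emph{subadditive}, for any two blocks we can write
$$
e^{\sqrt{c_0t}|\xi+\eta|_1}=e^{\sqrt{c_0t}(|\xi+\eta|_1-|\xi|_1-|\eta|_1)}\,e^{\sqrt{c_0t}|\xi|_1}\,e^{\sqrt{c_0t}|\eta|_1},
$$
so that, after localizing with Littlewood–Paley projectors and using that $\ddj$ commutes with $\Gop$, every dyadic piece of $\Gop(T_fg)$ or $\Gop R(f,g)$ is, up to the bounded bilinear operator $B_t$ of Lemma \ref{lem3.4}, a paraproduct/remainder block of $F=\Gop f$ and $G=\Gop g$. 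Concretely, one has the pointwise-in-$t$ identities $\ddj\Gop(\dot S_{j-1}f\,\ddj g)=\ddj B_t(\dot S_{j-1}F,\ddj G)$ and similarly for the remainder, with $B_t$ bounded $L^{p_1}\times L^{p_2}\to L^p$ whenever $1<p,p_1,p_2<\infty$ and $1/p=1/p_1+1/p_2$.

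For \eqref{R-E30b}: write $\Gop T_fg=\sum_j \ddj B_t(\dot S_{j-1}F,\ddj G)$ where the inner sum ranges over $j'$ with the usual spectral support $2^j$. For each $j$, Lemma \ref{lem3.4} with $p_1=q$, $p_2=p$ and the induced relation $1/p+1/q'=1/q$... — more precisely, use $B_t:L^q\times L^{p}\to L^{p}$? No: to land in $L^p$ with one factor in $L^p$ one needs the other in $L^\infty$, which is not available. Instead keep $B_t:L^{p}\times L^{p}\to L^{p/2}$ unhelpful too. The correct route, exactly as in the classical (non-Gevrey) proof of such "$p_2=p$" laws, is: estimate $\|\dot S_{j-1}F\|_{L^\infty}\lesssim \sum_{k\le j-2}2^{kd/q}\|\ddk F\|_{L^q}\lesssim \|F\|_{\dot B^{d/q}_{q,1}}$-type bound via Bernstein, but now with the Gevrey twist one cannot simply pull $\dot S_{j-1}F$ out in $L^\infty$ because $B_t$ is not bounded into $L^\infty$. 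The resolution is to use the decomposition \eqref{Btsum}: $B_t=\sum_{\alpha,\beta,\gamma}K_\alpha(Z_{t,\alpha,\beta}\,\cdot\ Z_{t,\alpha,\gamma}\,\cdot)$, with $K_\alpha,Z_{t,\alpha,\beta}$ bounded on every $L^r$, $1<r<\infty$, uniformly in $t$, and commuting with all Fourier multipliers. Hence it suffices to bound
$$
\sum_j \Bigl\|\ddj\bigl(Z_{t,\alpha,\beta}\dot S_{j-1}F\cdot Z_{t,\alpha,\gamma}\ddj G\bigr)\Bigr\|_{L^p},
$$
and on this ordinary (non-Gevrey) product we run Bony's argument verbatim: by Bernstein, $\|Z_{t,\alpha,\beta}\dot S_{j-1}F\|_{L^\infty}\lesssim\sum_{k\le j-2}2^{kd/q}\|Z_{t,\alpha,\beta}\ddk F\|_{L^q}\lesssim \sum_{k\le j-2}2^{(k-j)\sigma}\,2^{jd/q}\dots$ — carry the $2^{kd/q}$ weight, use $\ell^{r_1}$ summability of $2^{k(d/q-\sigma)}\|\ddk F\|_{L^q}$ (here the condition $\sigma>0$, or $\sigma\ge0$ if $r_1=1$, is exactly what makes $\sum_{k\le j-2}2^{(k-j)\dots}$ converge, when we want to factor out $2^{-j\sigma}$ times $\|F\|_{\dot B^{d/q-\sigma}_{q,r_1}}$), combine with $\|Z_{t,\alpha,\gamma}\ddj G\|_{L^p}\lesssim\|\ddj G\|_{L^p}$ and the $\ell^{r_2}$-norm of $2^{js}\|\ddj G\|_{L^p}$, apply Hölder in $j$ with $1/r=1/r_1+1/r_2$, and sum over the finitely many $(\alpha,\beta,\gamma)$. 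This gives \eqref{R-E30b}.

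For \eqref{R-E32b}: again replace $\Gop R(f,g)$, block by block, by $\sum_{\alpha,\beta,\gamma}K_\alpha$ applied to an ordinary remainder $\sum_j\sum_{|j'-j|\le1}Z_{t,\alpha,\beta}\ddj F\;Z_{t,\alpha,\gamma}\dot\Delta_{j'}G$, whose spectrum is contained in a ball $2^{j+2}$. Then use: for a function with spectrum in such a ball, $\|\ddq(\cdot)\|_{L^p}$ is controlled by the pieces with $j\gtrsim q$, and for each $j$ estimate $\|Z_{t,\alpha,\beta}\ddj F\|_{L^q}$ against $2^{-j(s_1+d/q)}\|F\|_{\dot B^{s_1+d/q}_{q,r_1}}$-weights and $\|Z_{t,\alpha,\gamma}\dot\Delta_{j'}G\|_{L^p}$ against $2^{-j s_2}\|G\|_{\dot B^{s_2}_{p,r_2}}$-weights, with Bernstein's embedding $\dot B^{s_1+d/q}_{q,r_1}\hookrightarrow\dot B^{s_1}_{\infty,r_1}$ (informally) accounting for the loss $d/q$; the hypothesis $s_1+s_2>0$ is what guarantees $\sum_{j\ge q-N}2^{(q-j)(s_1+s_2)}<\infty$ so that $2^{q(s_1+s_2)}\|\ddq R\|_{L^p}$ is $\ell^r$ in $q$ after Hölder with $1/r=1/r_1+1/r_2$. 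Summing the finite number of terms indexed by $(\alpha,\beta,\gamma)$ yields \eqref{R-E32b}.

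\textbf{Main obstacle.} The one genuinely delicate point — and the reason Lemma \ref{lem3.4} alone is not enough — is that $B_t$ does \emph{not} map into $L^\infty$, so the standard Bony trick of estimating the low-frequency factor $\dot S_{j-1}$ in $L^\infty$ cannot be applied directly to $B_t(\dot S_{j-1}F,\ddj G)$. The fix is precisely to descend to the representation \eqref{Btsum} and work term by term with the operators $K_\alpha$, $Z_{t,\alpha,\beta}$, which \emph{are} bounded on $L^r$ for $1<r<\infty$ and, crucially, commute with the Littlewood–Paley projectors and with Bernstein's inequalities; once there, the proof is the classical one for "$p_2=p$" product laws, with the $t$-dependence completely absorbed into uniform operator bounds. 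A secondary technical care is that $K_\alpha,Z_{t,\alpha,\beta}$ are not bounded on $L^1$ or $L^\infty$, which is exactly why the statement restricts to $1<p,q<\infty$; the Besov summation indices $r,r_1,r_2$ may still be taken in $[1,\infty]$ because the summation over $j$ uses only Hölder's inequality for sequences, not any $L^1$/$L^\infty$ boundedness in $x$.
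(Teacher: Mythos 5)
Your proposal is correct and follows essentially the same route as the paper: the key observation — that Lemma \ref{lem3.4} cannot be applied directly because the $Z_{t,\alpha,\beta}$ (hence $B_t$) are not bounded on $L^\infty$, so one must descend to the decomposition \eqref{Btsum}, apply H\"older to the ordinary product, and then use Bernstein block-by-block on $\dot\Delta_{j'}Z_{t,\alpha,\beta}F$ together with the $L^q$-boundedness of $Z_{t,\alpha,\beta}$ and its commutation with the $\dot\Delta_{j'}$ — is exactly the paper's "intermediate step". The remaining summations over $j$ and over the finitely many $(\alpha,\beta,\gamma)$ are carried out as in Proposition \ref{prodlaws1}, just as you describe.
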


\begin{proof}[Proof of Proposition \ref{prodlaws1}]
By the  definition of the paraproduct and of $B_t,$ we have 
\begin{equation}\label{estimlocT}
\Gop T_{f}g=\sum_{j\in \mathbb{Z}}W_{j}\with
W_{j}\triangleq B_t(\dot S_{j-1}F,\dot\Delta_jG).
\end{equation}
As no Lebesgue index reaches the endpoints, thanks to Lemma \ref{lem3.4}, we obtain
$$\begin{aligned}
 \|W_{j}\|_{L^p}
 &\lesssim \|\dot{S}_{j-1} F\|_{L^{p_1}} \|\ddj G\|_{L^{p_2}}\\&\lesssim \Big( \sum_{j'\leq j-2}\|\dot{\Delta}_{j'}F\|_{L^{p_{1}}}\Big) \|\ddj G\|_{L^{p_2}}.
  \end{aligned}$$
Therefore, it holds that
$$2^{j(s-\sigma)}\|W_{j}\|_{L^p}\lesssim 2^{js}\|\dot{\Delta}_{j}G\|_{L^{p_{2}}}\sum_{j'\leq j-2}2^{\sigma(j'-j)}2^{-\sigma j'}\|\dot{\Delta}_{j'}F\|_{L^{p_{1}}}.
$$
As $\sigma>0,$ H\"{o}lder and Young inequalities for series enable us to obtain
$$\Big(2^{j(s-\sigma)}\|W_{j}\|_{L^p}\Big)_{\ell^{r}}\lesssim \|F\|_{\dot{B}^{-\sigma}_{p_{1},r_1}}\|G\|_{\dot{B}^{s}_{p_{2},r_2}},$$
and one may conclude to \eqref{R-E30} by using Proposition \ref{prop2.1}.
\smallbreak
In the case $\sigma=0,$ one just has to use the fact that
$$\|\dot S_{j-1}F\|_{L^{p_1}}\lesssim \|F\|_{L^{p_1}}\lesssim 
\|F\|_{\dot B^0_{p_1,1}}.$$

Let us now turn to the remainder: we have for all $k\in \Z$,
$$
 \ddk \Gop R(f,g)=\sum_{j\geq k-2}\sum_{|j-j'|\leq 1}\ddk B_t(\ddj F, \dot\Delta_{j'}G).
$$
Taking the $L^p$ norm with respect to the spatial variable, we deduce by  Lemma \ref{lem3.4} that
$$ \|\ddk \Gop R(f,g)\|_{L^p} 
 \lesssim \sum_{j\geq k-2}\sum_{|j-j'|\leq 1} \|\dot{\Delta}_{j}F\|_{L^{p_{1}}}\|\dot{\Delta}_{j'}G\|_{L^{p_{2}}}.
$$
Then  everything now works as for estimating classical Besov norms:
\begin{multline}\label{R-E34}
2^{k(s_{1}+s_{2})}\|\ddk \Gop R(f,g)\|_{L^p}\\\lesssim \sum_{j\geq k-2}\sum_{|j-j'|\leq 1}
2^{(k-j)(s_{1}+s_{2})}2^{js_1}\|\dot{\Delta}_{j}F\|_{L^{p_{1}}} 2^{(j-j')s_{2}}2^{j's_{2}}\|\dot{\Delta}_{j'}G\|_{L^{p_{2}}},
\end{multline}
and  Young's and H\"{o}lder inequalities for series allow to get \eqref{R-E32} as $s_1+s_2>0$.
\end{proof}
\begin{proof}[Proof of Proposition \ref{prodlaws2}]
We argue as in the previous proof, except that one intermediate
step is needed for bounding the general term of the paraproduct or remainder. 
The key point of course is to bound in $L^p$ the general 
term of $B_t$ in \eqref{Btsum},  while the Lebesgue exponents do not fulfill the conditions of Lemma \ref{lem3.4}.

As an example, let us prove Inequality \eqref{R-E30b} for $\sigma=0.$
We write, combining H\"older and Bernstein inequality \eqref{R-E14}, and the properties
of continuity of operators $K_\alpha$ and $Z_{t,\alpha,\beta},$
$$\begin{aligned}\|K_{\alpha}(Z_{t,\alpha,\beta}\dot S_{j-1} F\, \cdot Z_{t,\alpha,\beta}\ddj G)\|_{L^p}
&\lesssim \|Z_{t,\alpha,\beta}\dot S_{j-1} F\, \cdot Z_{t,\alpha,\beta}\ddj G\|_{L^p}\\
&\lesssim \sum_{j'\leq j-2} \|\dot\Delta_{j'} Z_{t,\alpha,\beta}F\|_{L^\infty}\|Z_{t,\alpha,\beta}\ddj G\|_{L^p}\\
&\lesssim  \sum_{j'\leq j-2} 2^{j'\frac dq}  \|\dot\Delta_{j'}Z_{t,\alpha,\beta}F\|_{L^q}\|Z_{t,\alpha,\beta}\ddj G\|_{L^p}\\
&\lesssim \sum_{j'\leq j-2}2^{j'\frac dq}\|\dot\Delta_{j'}F\|_{L^q}\|\ddj G\|_{L^p}.
\end{aligned}$$
From this, we get 
$$
2^{js} \|W_{j}\|_{L^p} \lesssim \|F\|_{\dot B^{\frac dq}_{q,1}}\, 2^{js}\|\ddj G\|_{L^p}.
$$
We then obtain \eqref{R-E30b} for $\sigma=0$ thanks to Proposition \ref{prop2.1}.
\end{proof}

%
Combining the above propositions with functional  embeddings and Bony's decomposition, 
one may deduce  the following Gevrey product estimates in Besov spaces that  will be of extensive use in what follows:
\begin{prop}\label{prop3.4}
Let $1<p<\infty,$ $s_1,s_2\leq d/p$ with  $s_1+s_2>d\max(0,-1+2/p)$. There exists a constant $C$ such that 
the following estimate holds true:
\begin{equation}\label{R-E36}
\|\Gop (fg)\|_{\dot{B}^{s_1+s_2-\frac dp}_{p,1}}\leq C \|F\|_{\dot{B}^{s_1}_{p,1}}
\|G\|_{\dot{B}^{s_2}_{p,1}}\cdotp
\end{equation}
\end{prop}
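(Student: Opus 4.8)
The plan is to follow the standard proof of the Besov product law (as in Chapter 2 of \cite{BCD}) but with each occurrence of $fg$, $T_fg$, $R(f,g)$ replaced by its Gevrey-regularized counterpart, using Propositions \ref{prodlaws1} and \ref{prodlaws2} in place of the usual paraproduct and remainder estimates. First I would write Bony's decomposition $fg=T_fg+T_gf+R(f,g)$ and apply $\Gop$ to each term, so that it suffices to bound $\|\Gop T_fg\|_{\dot B^{s_1+s_2-d/p}_{p,1}}$, the symmetric term $\|\Gop T_gf\|$, and $\|\Gop R(f,g)\|$ separately.

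For the two paraproduct terms I would use \eqref{R-E30} with $r=r_1=r_2=1$. To bound $\Gop T_fg$ in $\dot B^{s_1+s_2-d/p}_{p,1}$, I want to write the target index as $s-\sigma$ with $s=s_2$ (the regularity carried by $G$) and $\sigma=d/p-s_1\geq 0$, which is $\ge 0$ precisely because $s_1\leq d/p$; then \eqref{R-E30} gives $\|\Gop T_fg\|_{\dot B^{s_1+s_2-d/p}_{p,1}}\lesssim \|F\|_{\dot B^{s_1-d/p}_{p_1,1}}\|G\|_{\dot B^{s_2}_{p,1}}$ with $1/p_1+1/p=1/p$, i.e.\ formally $p_1=\infty$, which is not allowed in Proposition \ref{prodlaws1}. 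So instead I would take a finite $q$ and use the embedding $\dot B^{s_1}_{p,1}\hookrightarrow \dot B^{s_1-d/p+d/q}_{q,1}$ together with Proposition \ref{prodlaws2}, estimate \eqref{R-E30b}: choosing $\sigma=d/p-s_1\geq 0$ there gives $\|\Gop T_fg\|_{\dot B^{s_2+s_1-d/p}_{p,1}}\lesssim \|F\|_{\dot B^{d/q-\sigma}_{q,1}}\|G\|_{\dot B^{s_2}_{p,1}}$, and $d/q-\sigma=d/q-d/p+s_1$ matches the embedded norm of $F$. The symmetric term is handled identically after exchanging the roles of $(s_1,F)$ and $(s_2,G)$; the condition $\sigma\geq 0$ becomes $s_2\leq d/p$, which is assumed. (When $\sigma=0$, i.e.\ $s_i=d/p$, the $r_1=1$ clause in the propositions covers it.)

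For the remainder $\Gop R(f,g)$ I would apply \eqref{R-E32} (again with $r=r_1=r_2=1$), which requires the index pair $(s_1',s_2')$ used there to satisfy $s_1'+s_2'>0$ and needs the output index $s_1'+s_2'$ to equal the target $s_1+s_2-d/p$. If $p\leq 2$ one has $d\max(0,-1+2/p)=d(2/p-1)$ and I can keep $p_1=p_2=p$: the target index is $s_1+s_2-d/p$, so I set $s_1'=s_1-d/p+d/q$-type adjustments... more simply, use the embeddings $\dot B^{s_i}_{p,1}\hookrightarrow\dot B^{s_i-d/p+d/(2p)}_{2p,1}$ hmm — cleaner: when $p\le 2$, use \eqref{R-E32} directly with $p_1=p_2=2p\le\infty$? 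That fails too. The honest route is to pick $q$ with $1/p=1/q+1/p$ impossible, so one must lower one exponent: use Hölder-type embedding $\dot B^{s_1}_{p,1}\hookrightarrow \dot B^{\tilde s_1}_{\tilde p,1}$ and $\dot B^{s_2}_{p,1}\hookrightarrow\dot B^{\tilde s_2}_{p,1}$ with $1/\tilde p+1/p\leq 1$... For $p>2$ the constraint $s_1+s_2>d(2/p-1)$ exactly guarantees that after embedding into $L^p\times L^p$-compatible spaces the remainder sum converges; I would invoke Proposition \ref{prodlaws2}, estimate \eqref{R-E32b}, with the regularity $d/q$ absorbed via the embedding $\dot B^{s_1}_{p,1}\hookrightarrow \dot B^{s_1+d/q-d/p}_{q,1}$ for a suitable large $q$, requiring $s_1+d/q-d/p+s_2>0$, which holds for $q$ close to $p/(2-p)$ when $p<2$ and for $q$ large when $p\ge 2$, precisely under the stated hypothesis $s_1+s_2>d\max(0,-1+2/p)$. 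Summing $2^{k(s_1+s_2)}\|\ddk\Gop R(f,g)\|_{L^p}$ over $k$ in $\ell^1$, exactly as in \eqref{R-E34}, then yields \eqref{R-E36}.

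The main obstacle I anticipate is bookkeeping the Lebesgue exponents: Proposition \ref{prodlaws1} forbids the endpoint $p_1=\infty$, so every step where the classical proof would put a factor in $L^\infty$ must instead be routed through a finite auxiliary exponent $q$ and a Besov embedding $\dot B^{d/p}_{p,1}\hookrightarrow L^\infty$ replaced by $\dot B^{d/q}_{q,1}\hookrightarrow L^\infty$, which is why Proposition \ref{prodlaws2} (the $p_2=p$, third-exponent-$q$ version) was proved in the first place. Verifying that the resulting constraints on $q$ are compatible with $s_i\le d/p$ and $s_1+s_2>d\max(0,-1+2/p)$ — in particular that one can always choose $q\in(1,\infty)$ making every intermediate Besov index land in a regime where the relevant proposition applies — is the only genuinely delicate point; everything else is the routine Littlewood–Paley summation already carried out in the proofs of Propositions \ref{prodlaws1} and \ref{prodlaws2}.
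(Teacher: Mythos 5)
Your overall architecture (Bony's decomposition, then the new Gevrey paraproduct/remainder estimates in place of the classical ones) is exactly the paper's, and your treatment of the two paraproduct terms via \eqref{R-E30b} with $\sigma=d/p-s_i\ge0$ is correct (the paper simply takes $q=p$, so no embedding is even needed). The gap is in the remainder term, which is precisely the part where the case split and the hypothesis $s_1+s_2>d\max(0,-1+2/p)$ enter. For $p\ge2$ your proposed use of \eqref{R-E32b} with a large auxiliary $q$ does not close: in \eqref{R-E32b} the positivity condition bears on the sum of the two \emph{output} indices, so targeting $\dot B^{s_1+s_2-d/p}_{p,1}$ forces $\sigma_1+\sigma_2=s_1+s_2-d/p>0$; your condition ``$s_1+d/q-d/p+s_2>0$'' misreads the proposition, and even as you state it it tends to $s_1+s_2>d/p$ as $q\to\infty$, which is strictly stronger than the assumed $s_1+s_2>0$. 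The correct (and simpler) route here is \eqref{R-E32} with $p_1=p_2=p$, which gives $\|\Gop R(f,g)\|_{\dot B^{s_1+s_2}_{p/2,1}}\lesssim\|F\|_{\dot B^{s_1}_{p,1}}\|G\|_{\dot B^{s_2}_{p,1}}$ under $s_1+s_2>0$ exactly, followed by the embedding $\dot B^{s_1+s_2}_{p/2,1}\hookrightarrow\dot B^{s_1+s_2-d/p}_{p,1}$.

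For $1<p<2$ you abandon the argument mid-sentence, and the attempted fixes (``$p_1=p_2=p$'', ``$p_1=p_2=2p$'', an embedding that would \emph{lower} the Lebesgue exponent of $F$, which does not exist) all fail, as you partly acknowledge. The missing idea is to lower the \emph{output} exponent rather than an input exponent: pick $p_0\in(1,p)$ and $p_2\ge p$ with $1/p_0=1/p+1/p_2$, embed $F$ into $\dot B^{s_1-2d/p+d/p_0}_{p_2,1}$ (legitimate since $p_2\ge p$), apply \eqref{R-E32} to land in $\dot B^{s_1+s_2-2d/p+d/p_0}_{p_0,1}$, and then embed that space into the target $\dot B^{s_1+s_2-d/p}_{p,1}$ (legitimate since $p_0<p$). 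The positivity condition becomes $s_1+s_2>2d/p-d/p_0$, which approaches the stated threshold $d(2/p-1)$ as $p_0\to1$, so the hypothesis guarantees an admissible choice of $p_0$. Until these two cases are carried out, the remainder bound -- and hence the proposition -- is not proved; note also that you have the cases interchanged when you attach the constraint $s_1+s_2>d(2/p-1)$ to $p>2$ (for $p>2$ the constraint is simply $s_1+s_2>0$).
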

\begin{proof}
In light of decomposition \eqref{R-E11}, we have
\begin{equation}\label{eq:bonydec}
\Gop (fg)=\Gop T_{f}g+\Gop T_{g}f+\Gop R(f,g).
\end{equation}
Then \eqref{R-E30b} and standard embedding imply that
$$\begin{cases}
\|\Gop T_{f}g\|_{\dot B^{s_1+s_2-\frac dp}_{p,1}}&\lesssim 
\|F\|_{\dot B^{\frac dp+(s_1-\frac dp)}_{p,1}}\|G\|_{\dot{B}^{s_2}_{p,1}}\\
\|\Gop T_{g}f\|_{\dot B^{s_1+s_2-\frac dp}_{p,1}}&\lesssim 
\|G\|_{\dot{B}^{\frac dp+(s_2-\frac dp)}_{p,1}} \|F\|_{\dot B^{s_1}_{p,1}}.
\end{cases}$$
It is easy to deal with the remainder if $p\geq2$: thanks to embeddings 
and \eqref{R-E32}, we have
$$
\|\Gop R(f,g)\|_{\dot{B}^{s_1+s_2-\frac dp}_{p,1}} \lesssim \|\Gop R(f,g)\|_{\dot{B}^{s_1+s_2}_{p/2,1}}\lesssim \|F\|_{\dot{B}^{s_1}_{p,1}} \|G\|_{\dot{B}^{s_2}_{p,1}}.
$$
If $1<p<2,$ then we use instead that $ \dot B^{\sigma+d(\frac 1{p_0}-\frac 1p)}_{p_0,1}\hookrightarrow
\dot B^{\sigma}_{p,1}$ for all  $1<p_0<p,$ and Inequality \eqref{R-E32} thus implies that
$$\begin{aligned}
 \|\Gop R(f,g)\|_{\dot{B}^{s_1+s_2-\frac dp}_{p,1}} &\lesssim \|\Gop R(f,g)\|_{\dot{B}^{s_1+s_2-2\frac dp +\frac d{p_0}}_{p_0,1}}\\
& \lesssim \|F\|_{\dot{B}^{s_1-\frac{2d}p +\frac d{p_0}}_{p_2,1}}\|G\|_{\dot{B}^{s_2}_{p,1}}\\
& \lesssim \|F\|_{\dot{B}^{s_1-\frac{2d}p +\frac d{p_0} +d(\frac 1p - \frac 1{p_2})}_{p,1}}\|G\|_{\dot{B}^{s_2}_{p,1}}
= \|F\|_{\dot{B}^{s_1}_{p,1}}
\|G\|_{\dot{B}^{s_2}_{p,1}},
\end{aligned}$$
whenever $1/p+1/p_2=1/p_0$, and $p_2 \geq p.$ 
Since $p<2,$ it is clear that those two conditions may be satisfied if taking $p_0$ close enough to $1.$ 
\end{proof}
\begin{rem}\label{r:algebra}
Proposition \ref{prop3.4} ensures that  the space $\bigl\{f\in\dot B^{\frac dp}_{p,1}\,\,\,
e^{\sqrt t\Lambda_1} f\in \dot B^{\frac dp}_{p,1}\bigr\}$  is  an algebra whenever $1<p<\infty.$
\end{rem}
 The previous estimates can be adapted to the Chemin-Lerner's spaces $\tilde{L}^{q}_{T}(\dot{B}^{\sigma}_{p,r})$.
For example, we have the following result. 
\begin{prop}\label{prop3.6}
Let $1< p<\infty$ and $1\leq q,q_1, q_2\leq \infty$ such that $\frac{1}{q}=\frac{1}{q_1}+\frac{1}{q_2}\cdotp$ If 
$\sigma_{1},\sigma_2\leq d/p$ and $\sigma_1+\sigma_2>d\max(0,-1+2/p)$ then there exists a constant $C>0$ such that for all $T\geq0,$ 
\begin{equation}\label{R-E38}
\|\Gop (fg)\|_{\tilde{L}^{q}_{T}(\dot{B}^{\sigma}_{p,1})}\leq C\|F\|_{\tilde{L}^{q_1}_{T}(\dot{B}^{\sigma_1}_{p,1})}
\|G\|_{\tilde{L}^{q_2}_{T}(\dot{B}^{\sigma_2}_{p,1})}\cdotp
\end{equation}
\end{prop}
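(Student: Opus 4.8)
The plan is to mimic the proof of Proposition \ref{prop3.4}, replacing at each step the use of the product law \eqref{R-E36} by its Chemin-Lerner analogue, the point being that all the operators involved ($B_t$, $K_\alpha$, $Z_{t,\alpha,\beta}$, the Littlewood-Paley blocks $\ddj$) act on the spatial variable only, and thus commute with taking $L^q$ norms in time. First I would write Bony's decomposition exactly as in \eqref{eq:bonydec},
$$\Gop(fg)=\Gop T_fg+\Gop T_gf+\Gop R(f,g),$$
and treat the three terms separately, with $\sigma=\sigma_1+\sigma_2-d/p$.

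For the two paraproduct terms, I would revisit the proof of \eqref{R-E30b}: there one bounds the generic block $W_j=B_t(\dot S_{j-1}F,\ddj G)$ in $L^p$ by $\|F\|_{\dot B^{d/q}_{q,1}}\,2^{js}\|\ddj G\|_{L^p}$ via Hölder, Bernstein and the $L^p$-boundedness of $K_\alpha,Z_{t,\alpha,\beta}$. Taking now the $L^q_T$ norm in time, applying Hölder in time with $1/q=1/q_1+1/q_2$ at the level of the estimate $\|W_j(t)\|_{L^p}\lesssim \bigl(\sum_{j'\le j-2}2^{j'd/p}\|\dot\Delta_{j'}F(t)\|_{L^p}\bigr)\,2^{j(\sigma_1+\sigma_2-d/p)(\cdot)}\ldots$ — more precisely, as in the proof of \eqref{R-E30}, bounding $\|\dot S_{j-1}F\|_{L^p}$ by a convolution of $2^{-\sigma j'}\|\dot\Delta_{j'}F\|_{L^p}$ against an $\ell^1$ sequence when $\sigma_1-d/p<0$ (and directly when $\sigma_1=d/p$), then taking $L^{q_1}_T$ of the $F$-factor and $L^{q_2}_T$ of the $G$-factor — yields $\|2^{j\sigma}W_j\|_{L^{q_2}_T(L^p)}\lesssim\|F\|_{\tilde L^{q_1}_T(\dot B^{\sigma_1}_{p,1})}\,2^{j\sigma_2}\|\ddj G\|_{L^{q_2}_T(L^p)}$; summing over $j$ gives the Chemin-Lerner paraproduct bound $\|\Gop T_fg\|_{\tilde L^q_T(\dot B^\sigma_{p,1})}\lesssim\|F\|_{\tilde L^{q_1}_T(\dot B^{\sigma_1}_{p,1})}\|G\|_{\tilde L^{q_2}_T(\dot B^{\sigma_2}_{p,1})}$, and symmetrically for $\Gop T_gf$ with the roles of $(\sigma_1,q_1)$ and $(\sigma_2,q_2)$ swapped. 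For the remainder, I would likewise revisit \eqref{R-E34}: the estimate $\|\ddk \Gop R(f,g)(t)\|_{L^p}\lesssim\sum_{j\ge k-2}\|\dot\Delta_jF(t)\|_{L^{p_1}}\|\dot\Delta_{j'}G(t)\|_{L^{p_2}}$ combined with Hölder in time gives, after multiplying by $2^{k(\sigma_1+\sigma_2)}$ and using the $s_1+s_2>0$ summation as in \eqref{R-E34}, the bound in $\tilde L^q_T(\dot B^{\sigma_1+\sigma_2}_{p_0,1})$; then the same embeddings used in the proof of Proposition \ref{prop3.4} — $\dot B^{\sigma_1+\sigma_2}_{p/2,1}\hookrightarrow\dot B^{\sigma_1+\sigma_2-d/p}_{p,1}$ when $p\ge2$, and $\dot B^{\sigma+d(1/p_0-1/p)}_{p_0,1}\hookrightarrow\dot B^\sigma_{p,1}$ with $p_0$ close to $1$ when $1<p<2$ — which are purely spatial and hence preserve the $\tilde L^q_T$ structure, bring us down to $\tilde L^q_T(\dot B^{\sigma_1+\sigma_2-d/p}_{p,1})$ and conclude. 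The condition $\sigma_1+\sigma_2>d\max(0,-1+2/p)$ is exactly what makes the $p<2$ remainder argument work, just as in Proposition \ref{prop3.4}.

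The main obstacle, such as it is, is bookkeeping rather than conceptual: one has to be careful that in Chemin-Lerner spaces the time integrability and the dyadic summation are performed in the opposite order from the usual Besov spaces, so Hölder's inequality in time must be applied \emph{before} taking the $\ell^1$ sum over $j$, and one must check that the convolution-in-$j'$ arguments (Young's inequality for series used to handle $\dot S_{j-1}$ in the paraproduct and the low-frequency sum $j\ge k-2$ in the remainder) still go through when the summands carry an $L^{q_i}_T$ norm — they do, since $\|\cdot\|_{L^{q_i}_T}$ is a norm and Minkowski's inequality lets one pull it through the series. A clean way to phrase the whole thing is simply to record that the proofs of Propositions \ref{prodlaws1}, \ref{prodlaws2} and \ref{prop3.4} only ever use Hölder's inequality in the spatial variable together with $L^p$-continuity of the Fourier multipliers, so inserting a Hölder inequality in time at the appropriate line turns every statement into its $\tilde L^q_T$ counterpart verbatim; I would present the proof at that level of detail, spelling out the paraproduct term as the representative case and indicating that the remainder and the composition-type adaptations are identical to Proposition \ref{prop3.4}.
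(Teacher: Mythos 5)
Your proposal is correct and follows exactly the route the paper intends: the paper gives no proof of Proposition \ref{prop3.6} at all, merely asserting that the previous estimates ``can be adapted'' to Chemin--Lerner spaces, and your write-up (Bony decomposition, the block-by-block bounds of Propositions \ref{prodlaws1}--\ref{prodlaws2} and \ref{prop3.4}, with H\"older in time inserted before the dyadic summation and Minkowski to pull the $L^{q_i}_T$ norms through the series in $j'$) is precisely that adaptation.
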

In order to prove Theorem \ref{thm3.1}, we need not only bilinear estimates involving Gevrey-Besov regularity,
but also composition estimates by real analytic functions.
\begin{lem}\label{l:compo}
\sl{
Let $F$ be a real analytic function in a neighborhood of $0,$ such that $F(0)=0$. Let $1<p<\infty$ and 
$-\min\bigl(\frac dp,\frac d{p'}\bigr)<s\leq\frac dp$ with $\frac 1{p'}=1-\frac 1p\cdotp$ There exist two constants $R_0$ and $D$ depending only on $p,$ $d$ and $F$ such that if for some $T>0$,
\begin{equation}\label{R-E55}
\|\Gop z\|_{\tilde{L}_T^{\infty}(\dot{B}^{\frac{d}{p}}_{p,1})}\leq R_0
\end{equation}
then we have 
\begin{equation}\label{R-E53}
\|\Gop F(z)\|_{{\wt L}^{\infty}_{T}(\dot{B}^{s}_{p,1})}\leq D\|\Gop z\|_{{\wt L}^{\infty}_{T}(\dot{B}^{s}_{p,1})}\cdotp
\end{equation}
}
\end{lem}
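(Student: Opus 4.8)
The plan is to adapt the classical composition estimate in Besov spaces (Chapter 2 of \cite{BCD}) to the Gevrey setting by expanding $F$ as a power series and controlling each monomial $z^n$ through the product estimate of Proposition \ref{prop3.4} (or rather its Chemin--Lerner version, Proposition \ref{prop3.6}). Since $F$ is real analytic near $0$ with $F(0)=0$, write $F(z)=\sum_{n\geq1}a_n z^n$ with $|a_n|\leq C_F\rho_0^{-n}$ for some radius of convergence $\rho_0>0$. Then $\Gop F(z)=\sum_{n\geq1}a_n\,\Gop(z^n)$, and the whole point is to estimate $\|\Gop(z^n)\|_{{\wt L}^\infty_T(\dot B^s_{p,1})}$ in a way that is summable in $n$.

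First I would set up the inductive product bound. Fix $s$ in the admissible range $-\min(d/p,d/p')<s\leq d/p$; the condition $s>-\min(d/p,d/p')$ is exactly what makes $s+d/p>d\max(0,-1+2/p)$, so Proposition \ref{prop3.6} applies with $\sigma_1=d/p$, $\sigma_2=s$, $\sigma=s$, and the time indices $q_1=q_2=\infty$, $q=\infty$. One subtlety: to apply the product law iteratively I need a quantity of the form $\Gop(z^k)$ sitting in $\dot B^{d/p}_{p,1}$, but $\Gop(z^k)\neq(\Gop z)^k$. This is precisely where the bilinear operator $B_t$ and Lemma \ref{lem3.4} enter: one shows by induction that $\|\Gop(z^n)\|_{{\wt L}^\infty_T(\dot B^{d/p}_{p,1})}\leq (C\|Z\|_{{\wt L}^\infty_T(\dot B^{d/p}_{p,1})})^n$ where $Z=\Gop z$, using at each step that $\Gop(z^{n})=\Gop(z\cdot z^{n-1})$ together with the estimate for $\Gop$ of a product in terms of $\Gop z$ and $\Gop(z^{n-1})$ (Proposition \ref{prop3.6} in the case $\sigma_1=\sigma_2=\sigma=d/p$, which is an algebra estimate by Remark \ref{r:algebra}). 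Similarly, the ``losing one derivative'' version gives
\begin{equation}
\|\Gop(z^n)\|_{{\wt L}^\infty_T(\dot B^s_{p,1})}\leq C^{n}\,\|Z\|_{{\wt L}^\infty_T(\dot B^{d/p}_{p,1})}^{\,n-1}\,\|Z\|_{{\wt L}^\infty_T(\dot B^s_{p,1})},
\end{equation}
again by induction: split $z^n=z\cdot z^{n-1}$, put the single factor $z$ in $\dot B^s_{p,1}$ and the block $z^{n-1}$ in $\dot B^{d/p}_{p,1}$, using the previous step for the latter.

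Then I would sum the series:
\begin{equation}
\|\Gop F(z)\|_{{\wt L}^\infty_T(\dot B^s_{p,1})}\leq \sum_{n\geq1}|a_n|\,C^n\,\|Z\|_{{\wt L}^\infty_T(\dot B^{d/p}_{p,1})}^{\,n-1}\|Z\|_{{\wt L}^\infty_T(\dot B^s_{p,1})}.
\end{equation}
Choosing $R_0$ small enough that $CR_0<\rho_0/2$ (say), the geometric-type series $\sum_n |a_n|C^n R_0^{n-1}$ converges to some finite $D=D(p,d,F)$, under hypothesis \eqref{R-E55}, and \eqref{R-E53} follows. The main obstacle, and the place requiring genuine care rather than routine bookkeeping, is the first inductive step: establishing that $\Gop$ of a product is controlled by $\Gop$ of the factors \emph{with constants that do not degenerate as $n$ grows}. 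This is not automatic because $e^{\sqrt{c_0t}\Lambda_1}$ is not an algebra morphism; it is handled exactly via the decomposition \eqref{Btsum} of $B_t$ into the finitely many operators $K_\alpha(Z_{t,\alpha,\beta}\cdot\,Z_{t,\alpha,\gamma}\cdot)$, whose $L^p$ bounds (Lemma \ref{lem3.4}) are uniform in $t$, so that the constant $C$ above is genuinely independent of $t$, $T$ and $n$. One should also double-check that the restriction $s\leq d/p$ is used (it guarantees the block $z^{n-1}\in\dot B^{d/p}_{p,1}$ can absorb the extra regularity when $s$ is large) and that the Chemin--Lerner time-integrability passes through the product law with $q=q_1=q_2=\infty$ without loss, which is the content of Proposition \ref{prop3.6}.
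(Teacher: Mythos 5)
Your proposal is correct and follows essentially the same route as the paper: expand $F$ in a power series, bound $\|\Gop(z^n)\|_{{\wt L}^\infty_T(\dot B^s_{p,1})}$ by induction via the Chemin--Lerner product law (Proposition \ref{prop3.6}, itself resting on the $B_t$ decomposition and Lemma \ref{lem3.4}), and sum the resulting series under the smallness condition \eqref{R-E55}. Your verification that $s>-\min(d/p,d/p')$ matches the hypothesis $\sigma_1+\sigma_2>d\max(0,-1+2/p)$ of the product law is exactly the point the paper uses implicitly.
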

\begin{rem} For proving our main results, we shall use the above lemma
with $s=\frac dp$ or $s=\frac dp-1.$ Note that the former case requires that $d\geq2$ and  $1<p<2d.$
\end{rem}

\begin{proof}
Let us write   $$F(z)=\sum^{+\infty}_{n=1}a_{n}z^{n}$$
and denote by $R_F>0$ the convergence radius of the series. For all $t\geq0$ (as usual $Z=\Gop z$) we have
\begin{equation}\label{R-E50}
\Gop F(z)=\sum^{+\infty}_{n=1}a_n\Gop z^n =\sum^{+\infty}_{n=1}a_n \Gop (\Gopm Z)^n,
\end{equation}
which implies from \eqref{R-E38}, by induction and thanks to the condition on $s$, that
$$\begin{aligned}
 \|\Gop F(z)\|_{{\wt L}^{\infty}_{T}(\dot{B}^s_{p,1})}&\leq C \sum^{+\infty}_{n=1} |a_{n}|\, \Big(C\|\Gop z\|_{{\wt L}^{\infty}_{t}(\dot{B}^{\frac dp}_{p,1})}\Big)^{n-1} \|\Gop z\|_{{\wt L}^{\infty}_{T}(\dot{B}^s_{p,1})}\\
& \leq \bar{F}(C\|\Gop z\|_{{\wt L}^{\infty}_{T}(\dot{B}^{\frac dp}_{p,1})}) \|\Gop z\|_{{\wt L}^{\infty}_{T}(\dot{B}^s_{p,1})},
\end{aligned}$$
where we define $\bar{F}(z)=\sum^{+\infty}_{n=1}|a_n|z^{n-1}$.
So when $\|\Gop z\|_{{\wt L}^{\infty}_{T}(\dot{B}^{\frac{d}{p}}_{p,1})}\leq \frac{R_F}{2C} \triangleq R_0$ we have \eqref{R-E53} with $D=\sup_{z\in \bar{B}(0,\frac{R_F}{2})} |\bar{F}(z)|$.
\end{proof}
Let us end this section with a variant of the previous result:
\begin{lem}\label{l:compo2}
\sl{
Let $F$ be a real analytic function in a neighborhood of $0$. Let $1<p<\infty$ and
$-\min\bigl(\frac dp,\frac d{p'}\bigr)<s\leq\frac dp\cdotp$ There exist two constants $R_0$ and $D$ depending only on $p,$ $d$ and $F$ such that if for some $T>0$,
\begin{equation}\label{R-E55b}
\max_{i=1,2}\|\Gop z_i\|_{\tilde{L}_T^{\infty}(\dot{B}^{\frac{d}{p}}_{p,1})}\leq R_0,
\end{equation}
then we have 
\begin{equation}\label{R-E53b}
\|\Gop \big(F(z_2)-F(z_1)\big)\|_{{\wt L}^{\infty}_{T}(\dot{B}^{s}_{p,1})}\leq D\|\Gop (z_2-z_1)\|_{\wt {L}^{\infty}_{T}(\dot{B}^{s}_{p,1})}\cdotp
\end{equation}
}
\end{lem}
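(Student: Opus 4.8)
The plan is to reduce Lemma~\ref{l:compo2} to an application of Lemma~\ref{l:compo} by writing the difference $F(z_2)-F(z_1)$ in a factored form that isolates $z_2-z_1$. First I would set $G(z)\triangleq F(z)-F(0)$, which is real analytic near $0$ with $G(0)=0$, so that $F(z_2)-F(z_1)=G(z_2)-G(z_1)$, and observe that it suffices to treat $G$; thus we may assume $F(0)=0$ from the start. Then I would use the standard device
$$F(z_2)-F(z_1)=(z_2-z_1)\int_0^1 F'\bigl(z_1+\tau(z_2-z_1)\bigr)\,d\tau=(z_2-z_1)\,H(z_1,z_2),$$
and, rather than carrying a two-variable composition, I would instead use the power-series form directly: writing $F(z)=\sum_{n\geq1}a_nz^n$, one has
$$F(z_2)-F(z_1)=\sum_{n\geq1}a_n\,(z_2^n-z_1^n)=(z_2-z_1)\sum_{n\geq1}a_n\sum_{k=0}^{n-1}z_2^k z_1^{n-1-k}.$$

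Next, following the proof of Lemma~\ref{l:compo}, I would apply $\Gop$ to this identity and expand each monomial through $\Gopm Z_i$, i.e. $\Gop(z_2-z_1)\,z_2^k z_1^{n-1-k}=\Gop\bigl(\Gopm(Z_2-Z_1)\cdot(\Gopm Z_2)^k\cdot(\Gopm Z_1)^{n-1-k}\bigr)$. Iterating Proposition~\ref{prop3.6} (the Chemin--Lerner version of the Gevrey product estimate), with the algebra index $\dot B^{d/p}_{p,1}$ absorbing each of the $n-1$ factors $Z_i$ and the remaining slot carrying the $\dot B^{s}_{p,1}$ norm of $Z_2-Z_1$ — this is exactly where the hypothesis $-\min(d/p,d/p')<s\le d/p$ is used, as in Lemma~\ref{l:compo} — I would get, for each $n$,
$$\|\Gop(z_2^n-z_1^n)\|_{{\wt L}^\infty_T(\dot B^s_{p,1})}\le C\,n\,\bigl(C\max_{i}\|\Gop z_i\|_{{\wt L}^\infty_T(\dot B^{d/p}_{p,1})}\bigr)^{n-1}\|\Gop(z_2-z_1)\|_{{\wt L}^\infty_T(\dot B^s_{p,1})},$$
the factor $n$ coming from the $n$ terms in $\sum_{k=0}^{n-1}$ (each estimated identically after symmetry). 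Summing over $n$ and using the smallness assumption \eqref{R-E55b} to keep the argument below the radius of convergence, the series $\sum_{n\geq1}|a_n|\,n\,r^{n-1}=\bar F'(r)$-type sum converges, which yields \eqref{R-E53b} with $D$ the supremum of that majorizing series on a fixed small ball, $R_0$ chosen (as in Lemma~\ref{l:compo}) as half the relevant convergence radius divided by $C$.

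The only genuine subtlety — and the step I would be most careful about — is the bookkeeping in the iterated application of Proposition~\ref{prop3.6}: one must check that at each stage exactly one factor is kept in $\dot B^s_{p,1}$ while all others are placed in the algebra $\dot B^{d/p}_{p,1}$, that the index constraints $\sigma_1,\sigma_2\le d/p$ and $\sigma_1+\sigma_2>d\max(0,-1+2/p)$ of Proposition~\ref{prop3.6} are met at every step (they are, precisely under $-\min(d/p,d/p')<s\le d/p$, with the $\wt L^\infty_T$ time-exponent split as $\infty=\infty+\infty+\cdots$), and that the combinatorial constant stays polynomial in $n$ so as not to spoil the convergence radius. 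Everything else is a verbatim repetition of the argument already carried out for Lemma~\ref{l:compo}.
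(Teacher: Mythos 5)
Your proposal is correct and follows essentially the same route as the paper: the identity $z_2^n-z_1^n=(z_2-z_1)\sum_{k=0}^{n-1}z_1^kz_2^{n-1-k}$, iterated applications of Proposition \ref{prop3.6} with $(\sigma_1,\sigma_2)=(s,\frac dp)$, and summation of the resulting majorizing series under the smallness condition \eqref{R-E55b}. The only cosmetic difference is that the paper absorbs the factor $n$ via $n\leq 2^{n-1}$ into the geometric ratio (yielding $\bar F(2Cr)$) rather than invoking the convergence of $\sum|a_n|\,n\,r^{n-1}$; both are valid.
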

\begin{proof}
 With the same notations as before,  Proposition \ref{prop3.6} with $(\sigma_1, \sigma_2)=(s,\frac dp)$  yields:
$$ \begin{aligned}
\|\Gop \big(F(z_2)&-F(z_1)\big)\|_{{\wt L}^{\infty}_{T}(\dot{B}^s_{p,1})} \leq \sum^{+\infty}_{n=1}|a_n|\|\Gop (z_2^n-z_1^n)\|_{{\wt L}^{\infty}_{T}(\dot{B}^s_{p,1})}\\
&\leq \sum^{+\infty}_{n=1}|a_n|\|\Gop \Big((z_2-z_1) \sum_{k=0}^{n-1} z_1^k z_2^{n-1-k}\Big)\|_{{\wt L}^{\infty}_{T}(\dot{B}^s_{p,1})}\\
&\leq C\! \sum^{+\infty}_{n=1}|a_n|\|\Gop (z_2-z_1)\|_{{\wt L}^{\infty}_{T}(\dot{B}^s_{p,1})} \sum_{k=0}^{n-1} \|\Gop (z_1^k z_2^{n-1-k})\|_{{\wt L}^{\infty}_{T}(\dot{B}_{p,1}^{\frac dp})}.
\end{aligned}$$
By induction, we get (using $n\leq 2^{n-1}$)
$$\begin{aligned}
\|&\Gop \big(F(z_2)-F(z_1)\big)\|_{{\wt L}^{\infty}_{T}(\dot{B}^s_{p,1})}\\
&\!\!\leq\! C \|\Gop (z_2-z_1)\|_{{\wt L}^{\infty}_{T}(\dot{B}^s_{p,1})} \sum^{+\infty}_{n=1}|a_n| \sum_{k=0}^{n-1} \!C^{n-1}\|\Gop z_1\|_{{\wt L}^{\infty}_{T}(\dot{B}_{p,1}^{\frac dp})}^k \|\Gop z_2\|_{{\wt L}^{\infty}_{t}(\dot{B}_{p,1}^{\frac dp})}^{n-1-k}\\
&\!\!\leq \! C \|\Gop (z_2-z_1)\|_{{\wt L}^{\infty}_{T}(\dot{B}^s_{p,1})} \sum^{+\infty}_{n=1}|a_n| \Big(2C \max_{i=1,2}\|\Gop z_i\|_{{\wt L}^{\infty}_{T}(\dot{B}_{p,1}^{\frac dp})}\Big)^{n-1}\\
&\!\!\leq \! C \|\Gop (z_2-z_1)\|_{{\wt L}^{\infty}_{T}(\dot{B}^s_{p,1})} \bar{F}\bigl(2C \max_{i=1,2}\|\Gop z_i\|_{{\wt L}^{\infty}_{T}(\dot{B}_{p,1}^{\frac dp})}\bigr)\cdotp
\end{aligned}$$
We conclude as before.
\end{proof}


\subsection{The proof of Theorem \ref{thm3.1}}

One can now come back to the proof of Theorem \ref{thm3.1}. Recall the following estimate that  has been shown in \cite{DD}.
\begin{lem} \label{lem3.5}
Let $(a,u)$ be  a  solution in $E$ of System \eqref{R-E3}. There exists a constant $R_0>0$ 
such that if 
$$\|a\|_{L^\infty(\dot B^{\frac d2}_{2,1})}\leq R_0$$
then  one has the following a priori estimate:
\begin{equation}\label{R-E39}
\|(a,u)\|_{E}\lesssim \|a_0\|_{\dot{B}^{\frac{d}{2}-1}_{2,1}\cap\dot{B}^{\frac{d}{2}}_{2,1}}+\|u_0\|_{\dot{B}^{\frac{d}{2}-1}_{2,1}}+ \big(1+\|(a,u)\|_{E}\big) \|(a,u)\|_{E}^2.
\end{equation}
\end{lem}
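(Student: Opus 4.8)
The plan is to combine the maximal-regularity estimate for the linearized Korteweg system with nonlinear product and composition estimates at critical regularity, following \cite{DD}.

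First I would localize Lemma \ref{lem3.1} in frequency: with $f$ and $g=\sum_{j=1}^5 g_j$ as in \eqref{fetg}, and since $|\xi|\approx 2^j$ on the support of $\ddj$, it gives for all $j\in\Z$ and $t\geq0$
$$\|\ddj(a,\Lambda a,u)(t)\|_{L^2}\lesssim e^{-c_0 2^{2j}t}\|\ddj(a_0,\Lambda a_0,u_0)\|_{L^2}+\int_0^t e^{-c_0 2^{2j}(t-\tau)}\|\ddj(f,\Lambda f,g)(\tau)\|_{L^2}\,d\tau.$$
Taking $\sup_{t\geq0}$, resp. $\int_{\R_+}dt$, of this bound after multiplication by $2^{j(d/2-1)}$, resp. $2^{j(d/2+1)}$, summing over $j\in\Z$, and using $\int_{\R_+}e^{-c_0 2^{2j}t}\,dt\lesssim2^{-2j}$ together with Young's convolution inequality in time, I would obtain the parabolic estimate
$$\|(a,u)\|_{E}\lesssim\|a_0\|_{\dot B^{d/2-1}_{2,1}\cap\dot B^{d/2}_{2,1}}+\|u_0\|_{\dot B^{d/2-1}_{2,1}}+\|f\|_{L^1(\R_+;\dot B^{d/2-1}_{2,1}\cap\dot B^{d/2}_{2,1})}+\|g\|_{L^1(\R_+;\dot B^{d/2-1}_{2,1})},$$
the $\Lambda a$ (resp. $\Lambda f$) component in Lemma \ref{lem3.1} producing the $\dot B^{d/2}_{2,1}\cap\dot B^{d/2+2}_{2,1}$ part of the norm of $a$ (resp. the $\dot B^{d/2}_{2,1}$ part of the norm of $f$).

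It will then remain to bound the source terms by $\|(a,u)\|_{E}$. The hypothesis $\|a\|_{L^\infty(\R_+;\dot B^{d/2}_{2,1})}\leq R_0$ keeps the density in the domain of analyticity of the functions \eqref{fcts}; writing each $H(a)=H(0)+(H(a)-H(0))$ reduces composition to the case $H(0)=0$, controlled by Lemma \ref{l:compo} at $t=0$, while in general $\|H(a)\|_{L^\infty\cap\dot B^{d/2}_{2,1}}\lesssim1+\|a\|_{\dot B^{d/2}_{2,1}}$. Relying on the algebra property of $\dot B^{d/2}_{2,1}$, the product law $\|zw\|_{\dot B^{d/2-1}_{2,1}}\lesssim\|z\|_{\dot B^{d/2}_{2,1}}\|w\|_{\dot B^{d/2-1}_{2,1}}$ (which follows from Proposition \ref{prop3.4} at $t=0$ and embeddings), and the interpolation inclusion $\wt L^\infty(\R_+;\dot B^{\sigma}_{2,1})\cap L^1(\R_+;\dot B^{\sigma+2}_{2,1})\hookrightarrow\wt L^2(\R_+;\dot B^{\sigma+1}_{2,1})$, I would check that $f=-\div(au)$ and $g_1=-u\cdot\nabla u$ are bounded in $L^1(\R_+;\dot B^{d/2-1}_{2,1}\cap\dot B^{d/2}_{2,1})$, resp. $L^1(\R_+;\dot B^{d/2-1}_{2,1})$, by $\|(a,u)\|_{E}^2$; that $g_2$ and $g_3$, being an analytic function of $a$ times a second-order derivative of $u$, are bounded by $(1+\|(a,u)\|_{E})\|a\|_{\wt L^\infty(\R_+;\dot B^{d/2}_{2,1})}\|u\|_{L^1(\R_+;\dot B^{d/2+1}_{2,1})}$; and that $g_4=J(a)\nabla a$, using $J(0)=0$, is bounded by $(1+\|(a,u)\|_{E})\|a\|_{\wt L^2(\R_+;\dot B^{d/2}_{2,1})}^2$.

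The delicate term is the capillary one, $g_5=\kab\nabla\bigl(\tka(a)\Delta a+\frac12\tka'(a)|\nabla a|^2\bigr)$, which a priori carries three derivatives of $a$. Here the point is that the outer gradient is absorbed into the target index, so that $\|g_5\|_{L^1(\R_+;\dot B^{d/2-1}_{2,1})}\lesssim\|\tka(a)\Delta a\|_{L^1(\R_+;\dot B^{d/2}_{2,1})}+\|\tka'(a)|\nabla a|^2\|_{L^1(\R_+;\dot B^{d/2}_{2,1})}$; the first term is then controlled, via the algebra property and $\tka(0)=0$, by $\|a\|_{\wt L^\infty(\R_+;\dot B^{d/2}_{2,1})}\|a\|_{L^1(\R_+;\dot B^{d/2+2}_{2,1})}$, and the second, after interpolating $\nabla a\in\wt L^2(\R_+;\dot B^{d/2}_{2,1})$, by $(1+\|(a,u)\|_{E})\|\nabla a\|_{\wt L^2(\R_+;\dot B^{d/2}_{2,1})}^2$ — both $\lesssim(1+\|(a,u)\|_{E})\|(a,u)\|_{E}^2$. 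Collecting all the contributions and inserting them in the estimate of the first step would give \eqref{R-E39}. I expect this capillary term to be the only real obstacle: one must verify that every third-order derivative of $a$ it generates is tamed either by the outer gradient — which lowers the target Besov index by one — or by the parabolic gain $a\in L^1(\R_+;\dot B^{d/2+2}_{2,1})$ of the density furnished by Lemma \ref{lem3.1}, and it is precisely the capillarity-induced smoothing of $\varrho$ that makes the estimate close. A secondary care point will be the bookkeeping of the composition series so that every remainder is at least quadratic in $\|(a,u)\|_{E}$, which is what produces the harmless prefactor $1+\|(a,u)\|_{E}$.
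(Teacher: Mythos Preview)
Your proposal is correct and follows the standard approach. Note that the paper does not actually prove Lemma \ref{lem3.5}: it simply quotes the estimate from \cite{DD}. Your sketch is precisely the argument of \cite{DD}, and it is also the non-Gevrey specialization of the paper's own proof of Lemma \ref{lem3.6} (set $c_0=0$ there and the computations collapse to yours, with the same treatment of $f$, $g_1,\dots,g_5$ and the same handling of the capillary term $g_5$ by absorbing the outer gradient into the target index).
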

We want to generalize it in the Gevrey regularity setting, getting the following result:
\begin{lem} \label{lem3.6}
Let $(a,u)$ be  the  global solution constructed in Theorem \ref{thm1.1}.
Denote $A\triangleq \Gop a$ and $U\triangleq \Gop u$
where $c_0$ is the constant of Lemma \ref{lem3.1}.
 There exists a constant $R_0 >0$ 
such that if 
\begin{equation}\label{eq:smalla}
\|A\|_{\wt L^\infty(\dot B^{\frac d2}_{2,1})}\leq R_0,
\end{equation}
then we have
\begin{equation}\label{R-E40}
\|({A},{U})\|_{E}\lesssim \|a_0\|_{\dot{B}^{\frac{d}{2}-1}_{2,1}\cap\dot{B}^{\frac{d}{2}}_{2,1}}+\|u_0\|_{\dot{B}^{\frac{d}{2}-1}_{2,1}}+ \big(1+\|(A,U)\|_{E}\big) \|(A,U)\|_{E}^2.
\end{equation}
\end{lem}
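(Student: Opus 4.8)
The plan is to apply the Gevrey smoothing estimate of Lemma \ref{lem3.1} to the (rescaled) system \eqref{R-E3} satisfied by $(a,u)$, in the spirit of the $L^2$ a priori estimate of Lemma \ref{lem3.5}, but keeping track of the Gevrey weight $\Gop$ throughout. Concretely, since Lemma \ref{lem3.1} provides exactly the bound
$$
|(\wh A,|\xi|\wh A,\wh U)(t,\xi)|\lesssim e^{-c_0|\xi|^2 t}|(\wh a,|\xi|\wh a,\wh u)(0,\xi)|+\int_0^t e^{-c_0|\xi|^2(t-\tau)}\bigl|\bigl(\Goptau\wh f,\,|\xi|\Goptau\wh f,\,\Goptau\wh g\bigr)(\tau,\xi)\bigr|\,d\tau,
$$
after noticing that $e^{\sqrt{c_0 t}|\xi|_1}\le e^{\sqrt{c_0(t-\tau)}|\xi|_1}e^{\sqrt{c_0\tau}|\xi|_1}$ and that the extra factor $M_1=e^{-[\sqrt{t-\tau}+\sqrt{\tau}-\sqrt t]\Lambda_1}$ has uniformly bounded $L^1$ kernel by Lemma \ref{lem3.2}, one can run the standard maximal-regularity / Duhamel estimates for the heat semigroup in Chemin--Lerner spaces dyadic block by dyadic block. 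This converts the a priori bound of Lemma \ref{lem3.5} for $(a,u)$ into the same inequality with $(a,u)$ replaced by $(A,U)=(\Gop a,\Gop u)$ on the left and with the nonlinear terms $f,g_1,\dots,g_5$ replaced by $\Gop f,\Gop g_1,\dots,\Gop g_5$.

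The next step is to bound each $\|\Gop g_j\|$ and $\|\Gop f\|$ in the relevant $\tilde L^1_T(\dot B^\sigma_{2,1})$ norms by products of norms of $A$ and $U$. Here is where the machinery of Section 2.3 is used: $f=-\div(au)$ and $g_1=-u\cdot\nabla u$ are pure products, handled by the Gevrey product estimate of Proposition \ref{prop3.6} (with $p=2$); $g_2,g_3$ involve an analytic function of $a$ (namely $I(a)$, $\tilde\mu(a)$, $\tilde\lambda(a)$) multiplied by a second-order derivative of $u$, so one first applies Lemma \ref{l:compo} to $\Gop(\text{composition})$ (with $s=d/2$), then Proposition \ref{prop3.6} for the product; $g_4=J(a)\cdot\nabla a$ and $g_5$ are treated the same way, with the composition $\Gop\tilde\kappa(a)$ estimated by Lemma \ref{l:compo} and the products by Proposition \ref{prop3.6}, the two extra derivatives on $a$ in $g_5$ being compensated by the $\dot B^{d/2+2}_{2,1}$-control of $A$ built into $E$ (this is precisely the capillarity-induced smoothing of the density). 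Collecting the $g_j$ one obtains $\|\Gop g\|\lesssim(1+\|(A,U)\|_E)\|(A,U)\|_E^2$ and similarly for $\Gop f$, which plugged into the linearized estimate yields \eqref{R-E40}.

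The smallness assumption \eqref{eq:smalla} on $\|A\|_{\tilde L^\infty(\dot B^{d/2}_{2,1})}$ is needed exactly to make the hypothesis of the composition lemma (inequality \eqref{R-E55}) hold, so that Lemma \ref{l:compo} applies with $z=a$; this plays the role of the condition $\|a\|_{L^\infty(\dot B^{d/2}_{2,1})}\le R_0$ in Lemma \ref{lem3.5}. One should also use that $a\in\tilde C_b(\R_+;\dot B^{d/2}_{2,1})$ and $A\in E$ guarantee all the formal manipulations (Duhamel, Fourier computations, summation of the series in the composition lemma) are legitimate for the genuinely smooth solution; strictly one argues first for the smooth approximate solutions of the scheme used in Theorem \ref{thm1.1} and passes to the limit.

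The main obstacle, I expect, is the term $g_5=\kab\nabla(\tilde\kappa(a)\Delta a+\tfrac12\nabla\tilde\kappa(a)\cdot\nabla a)$: it is genuinely third order in $a$, and to estimate $\|\Gop g_5\|_{\tilde L^1_T(\dot B^{d/2-1}_{2,1})}$ one needs the Gevrey product estimate at the borderline of the hypotheses of Proposition \ref{prop3.6} (one factor at regularity $d/2$, the other at $d/2+1$ or $d/2+2$, whose sum minus $d/2$ must land at the target index), together with the composition estimate for $\tilde\kappa(a)$ at both regularities $d/2$ and $d/2-1$ as allowed by the remark following Lemma \ref{l:compo}. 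Keeping the bookkeeping of indices consistent — and making sure every product of a "high" norm of $A$ with a norm of $A$ or $U$ closes back into $\|(A,U)\|_E^2$ without losing the time-integrability — is the delicate point; the viscous terms $g_2,g_3$ are of the same nature but strictly easier since they are only second order.
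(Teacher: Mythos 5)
Your proposal is correct and follows essentially the same route as the paper: the Gevrey-weighted version of the linear estimate of Lemma \ref{lem3.1} is obtained block by block using the subadditivity of $t\mapsto\sqrt t$ together with Lemmas \ref{lem3.2}--\ref{lem3.3} and Plancherel, and the nonlinear terms are then closed with the Gevrey product law (Propositions \ref{prop3.4}/\ref{prop3.6}) and the composition Lemma \ref{l:compo} under the smallness condition \eqref{eq:smalla}. You also correctly single out the second part of $g_5$ as the delicate term, which the paper handles exactly as you suggest, by rewriting $\nabla\tka(a)\cdot\nabla a=\tka'(a)\nabla a\cdot\nabla a$ so that the composition is applied at regularity at most $d/2$.
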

\begin{proof}
Apply $\ddq $ to \eqref{R-E4}-\eqref{R-E5} and repeat the procedure leading to Lemma \ref{lem3.1}. 
Multiplying by the factor $e^{\sqrt{c_0 t}|\xi|_1}$ we end up with
$$\displaylines{
 |(\widehat{\ddq A},\widehat{\ddq \nabla A}, \widehat{\ddq U})(t,\xi)|\leq C\Big( e^{\sqrt{c_0 t}|\xi|_1} e^{-c_0|\xi|^2t}|(\widehat{\ddq a_0},\widehat{\ddq \nabla a_0},\widehat{\ddq u_0})|\hfill\cr\hfill
 +e^{\sqrt{c_0 t}|\xi|_1} \int^t_0e^{-c_0|\xi|^2(t-\tau)}|(\widehat{\ddq f},\widehat{\ddq \nabla f},\widehat{\ddq }g)|(\tau, \xi)\,d\tau \Big)\cdotp}$$
 Taking the $L^2$ norm, thanks to the Fourier-Plancherel theorem, we get for all $t\geq0,$
 $$\displaylines{
 \|(\ddq {A},\ddq \nabla{A}, \ddq {U})(t)\|_{L^2} \lesssim \|e^{\sqrt{c_0 t}\Lambda_1 +\frac 12 c_0 t \Delta} e^{\frac 12c_0 t \Delta}(\ddq a_0,\ddq \nabla a_0,\ddq u_0)\|_{L^2}\hfill\cr\hfill
 +\!\!\int_0^t\!\! \|e^{(\sqrt{c_0 (t-\tau)}\Lambda_1 \!+\!\frac 12 c_0 (t-\tau) \Delta)}
 e^{-\sqrt{c_0}(\sqrt{t-\tau}+\sqrt{\tau}-\sqrt{t})\Lambda_1} e^{\frac 12c_0 (t-\tau) \Delta}(\ddq F,\ddq \!\nabla F,\ddq G)(\tau)\|_{L^2}d\tau}
$$
and thanks to Lemmas \ref{lem3.2} and \ref{lem3.3}, and to the properties 
of localization of $\ddq,$ we obtain, denoting $c_1\triangleq \frac{9}{32} c_0,$
 \begin{multline}
 \|(\ddq {A},\ddq \nabla{A}, \ddq {U})(t)\|_{L^2} \leq C\biggl(e^{-c_1t2^{2q}}\|(\ddq a_0,\ddq \nabla a_0,\ddq u_0)\|_{L^2}\\ +\int_0^t \|e^{-c_1 (t-\tau) 2^{2q}}(\ddq F,\ddq \nabla F,\ddq G)(\tau)\|_{L^2}\,d\tau\biggr)\cdotp
\label{estimloc}
 \end{multline}
Therefore,  multiplying by $2^{q(\frac d2-1)}$ and summing on $q\in\Z,$ we obtain that for all $t\geq 0,$ 
$$\displaylines{\quad
\|({A},{U})\|_{E_t}\triangleq\|(A,\nabla A, U)\|_{\tilde{L}_t^\infty(\dot{B}_{2,1}^{\frac d2 -1})} +c_0 \|(A,\nabla A, U)\|_{L_t^1 (\dot{B}_{2,1}^{\frac d2 +1})}\hfill\cr\hfill
\leq C\Bigl( \|(a_0, \nabla a_0, u_0)\|_{\dot{B}_{2,1}^{\frac d2 -1}} + \|(F, \nabla F, G)\|_{L_t^1(\dot{B}_{2,1}^{\frac d2 -1})}\Bigr)\cdotp\quad}$$
We are left with  estimating  the external force terms as in the classical Besov case, but using the laws suited to Gevrey regularity.

Regarding $F,$ we have thanks to Proposition \ref{prop3.4}
$$\begin{aligned}
 \int_0^t \|F(\tau)\|_{\dot{B}_{2,1}^{\frac d2 -1}} d\tau& \leq\int_0^t \|\Goptau (au)\|_{\dot{B}_{2,1}^{\frac d2}} d\tau\\
 &\leq C\int_0^t \|A\|_{\dot{B}_{2,1}^{\frac d2}} \|U\|_{\dot{B}_{2,1}^{\frac d2}}\,d\tau 
 \leq \|A\|_{L_t^2(\dot{B}_{2,1}^{\frac d2})} \|U\|_{L_t^2(\dot{B}_{2,1}^{\frac d2})}.
\end{aligned}$$
Estimating $\nabla F$ is also based  on Proposition \ref{prop3.4},
after using  that $f=-u\cdot\nabla a-\nabla(a\div u).$ Then one may write that
$$\begin{aligned}
 \int_0^t \|\nabla F(\tau)\|_{\dot{B}_{2,1}^{\frac d2-1}} d\tau &\leq \int_0^t \|\Goptau (u\cdot\nabla a+ a\,\div u)\|_{\dot{B}_{2,1}^{\frac d2}} d\tau\\
 &\leq C\int_0^t \bigl( \|U\|_{\dot{B}_{2,1}^{\frac d2}}\|\nabla A\|_{\dot B_{2,1}^{\frac d2}}
+  \|A\|_{\dot{B}_{2,1}^{\frac d2}}\|\div U\|_{\dot B_{2,1}^{\frac d2}}\bigr) d\tau\\
 &\leq C\Bigl(\|U\|_{L_t^2(\dot{B}_{2,1}^{\frac d2})} \|A\|_{L_t^2(\dot{B}_{2,1}^{\frac d2+1})} 
 + \|A\|_{L_t^\infty(\dot{B}_{2,1}^{\frac d2})} \|U\|_{L_t^1(\dot{B}_{2,1}^{\frac d2+1})} \Bigr)\cdotp
\end{aligned}$$
One can now turn to $g$: using Proposition \ref{prop3.4}  with $(s_1,s_2)=(\frac d2-1, \frac d2)$ yields
\begin{equation}
 \int_0^t \|\Goptau g_1\|_{\dot{B}_{2,1}^{\frac d2 -1}} d\tau =\int_0^t \|\Goptau (u\cdot \nabla u)\|_{\dot{B}_{2,1}^{\frac d2 -1}} d\tau \leq C \|U\|_{L_t^2(\dot{B}_{2,1}^{\frac d2})}^2.
\end{equation}
Using the same product law together with Lemma \ref{l:compo}, and under the following condition 
that depends on  the convergence radii of the analytic functions appearing in $g$:
\begin{equation}
 \|\Gop a\|_{\wt L^\infty(\dot B^{\frac d2}_{2,1})}\leq \frac{1}{2C}\min(R_I, R_{\tmu}, R_{\tlamb}, R_{\tka}, R_J),
\end{equation}
we get that 
\begin{equation}
 \int_0^t \|\Goptau g_3\|_{\dot{B}_{2,1}^{\frac d2 -1}} d\tau \leq C \|A\|_{L_t^\infty(\dot{B}_{2,1}^{\frac d2})} \|U\|_{L_t^1(\dot{B}_{2,1}^{\frac d2 +1})}.
\end{equation}
Similarly, we obtain:
\begin{equation}
 \begin{cases}
 \vspace{0.2cm}
  \displaystyle{\int_0^t} \|\Goptau g_2\|_{\dot{B}_{2,1}^{\frac d2 -1}} d\tau \leq C (1+ \|A\|_{L_t^\infty(\dot{B}_{2,1}^{\frac d2})})\|A\|_{L_t^\infty(\dot{B}_{2,1}^{\frac d2})} \|U\|_{L_t^1(\dot{B}_{2,1}^{\frac d2 +1})},\\
  \vspace{0.2cm}
  \displaystyle{\int_0^t} \|\Goptau g_4\|_{\dot{B}_{2,1}^{\frac d2 -1}} d\tau \leq C \|A\|_{L_t^2(\dot{B}_{2,1}^{\frac d2})}^2,\\
  \displaystyle{\int_0^t} \|\Goptau \nabla(\tka(a) \Delta a)\|_{\dot{B}_{2,1}^{\frac d2 -1}} d\tau \leq C
   \|A\|_{L_t^\infty(\dot{B}_{2,1}^{\frac d2})} \|A\|_{L_t^1(\dot{B}_{2,1}^{\frac d2+2})}.
 \end{cases}
\end{equation}
We have to be careful with the second part of $g_5$: as Lemma \ref{l:compo} requires the regularity index to be less than $\frac d2$, we have to rewrite the term into:
$$\begin{aligned}
 \int_0^t \|\Goptau \nabla(\nabla(\tka(a)) \cdot \nabla a)\|_{\dot{B}_{2,1}^{\frac d2 -1}} d\tau &\leq\int_0^t \|\Goptau (\tka'(a) \nabla a \cdot \nabla a)\|_{\dot{B}_{2,1}^{\frac d2}} d\tau\\
 &\leq C (1+\|A\|_{L_t^\infty(\dot{B}_{2,1}^{\frac d2})}) \|A\|_{L_t^2(\dot{B}_{2,1}^{\frac d2+1})}^2.
\end{aligned}$$
Putting all the above estimates together, we conclude the proof of Lemma \ref{lem3.6}.
\end{proof}
\medbreak
Now we are able to complete the proof of Theorem \ref{thm3.1} by means of the fixed point theorem. Let  $W(t)$ be the semi-group associated to the left-hand side of \eqref{R-E3}. According to the standard Duhamel formula, one has
$$\displaylines{
\left(  \begin{array}{c}
    a(t) \\    u(t) \\
  \end{array} \right)=\left(  \begin{array}{c}
   a_{L} \\
    u_{L} \\
  \end{array}
\right)+\int^{t}_{0}W(t-\tau)\left(
                               \begin{array}{c}                                 f(\tau) \\
                                 g(\tau) \\                               \end{array}
                             \right)d\tau
                             \with                          
\left(  \begin{array}{c}
   a_{L} \\
    u_{L} \\
  \end{array}
\right)\triangleq W(t)\left(
  \begin{array}{c}
    a_0 \\
    u_0 \\
  \end{array}
\right)\cdotp}$$

Define the functional $\Psi_{(a_{L},u_{L})}$ in a neighborhood of zero in the space $F$ by
\begin{eqnarray}\label{R-E57}
\Psi_{(a_{L},u_{L})}(\bar{a},\bar{u})=\int^{t}_{0}W(t-\tau)\left(
                                            \begin{array}{c}
                                              f(a_{L}+\bar{a},u_{L}+\bar{u}) \\
                                              g(a_{L}+\bar{a},u_{L}+\bar{u}) \\
                                            \end{array}
                                          \right)d\tau.
                                        \end{eqnarray}
To get the existence part of the theorem, it suffices to show that $\Psi_{(a_{L},u_{L})}$ has a fixed point in $F$. Our procedure is divided into two steps : stability 
of some closed ball $\mathcal{B}(0,r)$ of $F$ by $\Psi_{(a_{L},u_{L})},$ then contraction in that ball. 
As those two properties have been established for  the space $E$ in \cite{DD}, 
we shall concentrate on  proving suitable
bounds for $\Goptau\Psi_{(a_{L},u_{L})}(\bar a,\bar u).$

\subsubsection*{Step 1: Stability of some ball $\mathcal{B}(0,r)$}

We prove that the ball $\mathcal{B}(0,r)$ of $F$ is stable under $\Psi_{(a_{L},u_{L})}$, provided the radius $r$ is small enough. Let $a=a_{L}+\bar{a}$ and $u=u_{L}+\bar{u}$. If the data fulfill \eqref{eq:data},  then from Lemmas \ref{lem3.5}, \ref{lem3.6} and the definition of the space $F,$ we get
\begin{equation}\label{R-E58}
\|\Goptau(a_{L},u_{L})\|_{E} \leq C (\|a_0\|_{\dot{B}^{\frac d2-1}_{2,1}\cap\dot{B}^{\frac d2}_{2,1}}+\|u_0\|_{\dot{B}^{\frac d2-1}_{2,1}}) \leq C \eta,
\end{equation}
and
\begin{equation}\label{R-E59}
\|\Goptau\Psi_{(a_{L},u_{L})}(\bar{a},\bar{u})\|_{E}\leq C\|\Goptau (f,\nabla f,g)\|_{L^1_{t}(\dot{B}^{\frac d2-1}_{2,1})}\cdotp
\end{equation}
Assuming that $r$ is so small that:
\begin{eqnarray}\label{R-E60}
\|\Gop a\|_{L^\infty(\dot{B}^{\frac d2}_{2,1})}\leq \|(a,u)\|_F \leq r\leq \frac{1}{2C}\min(R_I, R_{\tmu}, R_{\tlamb}, R_{\tka}, R_J),
\end{eqnarray}
and also that  $2C\eta\leq r,$ we get
\begin{multline}
 \|\Psi_{(a_{L},u_{L})}(\bar{a},\bar{u})\|_{F}\leq C \|(a_{L}+\bar{a},u_{L}+\bar{u})\|_{F}^2\Big(1+ \|(a_{L}+\bar{a},u_{L}+\bar{u})\|_{F}\Big)\\
 \leq C(C\eta+r)^2(1+C\eta+r) \leq C \frac 94 r^2 \Bigl(1+\frac 32 r\Bigr)\cdotp
 \label{R-E61}
\end{multline}
Finally, choosing $(r,\eta)$ such that
$$
r\leq \min \Big (1,\frac{8}{45C},\frac{1}{2C}\min(R_I, R_{\tmu}, R_{\tlamb}, R_{\tka}, R_J)\Big)\ \ \ \mbox{and}\ \ \ \eta\leq \frac{r}{2C},
$$
 assumption \eqref{R-E60} is satisfied. Hence, it follows from \eqref{R-E61} that 
 $$\Psi_{(a_{L},u_{L})}(\cB(0,r))\subset \cB(0,r).$$

\subsubsection*{Step 2: The contraction property}

Let $(\bar{a}_{1},\bar{u}_{1})$ and $(\bar{a}_{2},\bar{u}_{2})$ be  in $\cB(0,r)$. Denote $a_{i}=a_{L}+\bar{a}_{i}$ and $u_{i}=u_{L}+\bar{u}_{i}$ for $i=1,2$. According to \eqref{R-E57} and Lemmas \ref{lem3.5}, \ref{lem3.6}, we have (as already explained, we  focus on bounds for the Gevrey estimates, the estimates in $E$ are in \cite{DD})
$$
\displaylines{\|\Psi_{(a_{L},u_{L})}(\bar{a}_2,\bar{u}_2)-\Psi_{(a_{L},u_{L})}(\bar{a}_1,\bar{u}_1)\|_{F}\hfill\cr\hfill
=\|\Goptau \Big( f(a_2,u_2)-f(a_1,u_1), \nabla f(a_2,u_2)-\nabla f(a_1,u_1),g(a_2,u_2)-g(a_1,u_1)\Big) \|_{L^1(\dot{B}^{\frac{d}{2}-1}_{2,1})}}$$
where $f$ and $g$ are defined in \eqref{fetg}. All terms are estimated exactly as in the previous step except that we use in addition Lemma \ref{l:compo2}. Let us for example give details for $g_2=(1-I(a))\div (\tmu(a) \cdot\nabla a)$)
(assume that $\wt\lambda=0$ for conciseness):
\begin{multline}
\|g_2(a_2,u_2)-g_2(a_1,u_1)\|_{L^1(\dot{B}^{\frac{d}{2}-1}_{2,1})} \leq \|\big(I(a_2)-I(a_1)\big) \div (\tmu(a_2) \cdot\nabla a_2)\|_{L^1(\dot{B}^{\frac{d}{2}-1}_{2,1})}\\
+\|(1-I(a_1)) \div \big((\tmu(a_2)-\tmu(a_1))\nabla u_2+ \tmu(a_1) \cdot\nabla (u_2-u_1)\big)\|_{L^1(\dot{B}^{\frac{d}{2}-1}_{2,1})}.
\end{multline}
Following the previous computations (together with Lemma \ref{l:compo2} for the first and second terms), we obtain, if $r$ and $\eta$ are small enough, 
$$\begin{aligned}
 \|&\Psi_{(a_{L},u_{L})}(\bar{a}_2,\bar{u}_2)-\Psi_{(a_{L},u_{L})}(\bar{a}_1,\bar{u}_1)\|_{F}\\
 &\leq C \Big(\|(a_1,u_1)\|_F +\|(a_2,u_2)\|_F\Big) \Big(1\!+\! \|(a_1,u_1)\|_F \!+\!\|(a_2,u_2)\|_F\Big) \|(\bar{a}_{2}-\bar{a}_{1},\bar{u}_2-\bar{u}_1)\|_{F}\\
&\leq 4C(r+C\eta)(1+r+C\eta) \|(\bar{a}_{2}-\bar{a}_{1},\bar{u}_2-\bar{u}_1)\|_{F} \\
&\leq \frac14\|(\bar{a}_{2}-\bar{a}_{1},\bar{u}_2-\bar{u}_1)\|_{F}.
\end{aligned}$$
Hence, combining the two steps completes the proof of Theorem \ref{thm3.1}.
\qed

\section{The $L^p$ framework}\setcounter{equation}{0}\label{sec:4}

Our aim here is  to extend Theorem \ref{thm3.1} to more general critical Besov spaces.
Recall that for the  classical compressible Navier-Stokes equations,  the first two authors \cite{CD} and Chen-Miao-Zhang \cite{CMZ}  established a global existence result for small 
 data in $L^p$ type critical Besov spaces. The proofs therein are based on the  study of the paralinearized system combined with a Lagrangian change of coordinates. 
A more elementary method has been proposed afterward by B. Haspot 
in \cite{H2}.  It relies on the introduction of some suitable \textit{effective velocity}
that,  somehow,  allows to uncouple the velocity equation from the mass equation.

In the present section, by combining  Haspot's approach with estimates in the same spirit
as the previous section, we shall not only extend the critical regularity 
result in $L^p$ spaces to the capillary case, but also obtain Gevrey analytic regularity:
\begin{thm}\label{thm4.1} Assume  that
the functions $\kappa$, $\lambda,$ $\mu$ and $P$ are real analytic
and that the condition $P'(\rhob)>0$ is fulfilled.
Let $p\in [2,\min(4,2d/(d-2))]$ with, additionally, $p\not=4$ if $d=2.$
There exists an integer $k_0\in\N$ 
and a real number $\eta>0$ depending only on  the functions  $\kappa$, $\lambda,$ $\mu$ and $P,$   and on $p$ and $d$, such that if one defines the threshold between low and high frequencies as  in \eqref{eq:k0}, 
if    $a_0\in \dot B^{\frac d{p}}_{p,1}$
and $u_0\in \dot B^{\frac d{p}-1}_{p,1}$ with,
    besides,  $(a_0^\ell,u_0^\ell)$  in $\dot B^{\frac d2-1}_{2,1}$ satisfy
    \begin{equation}\label{R-E64}
\cX_{p,0}\triangleq \|(a_0,u_0)\|^\ell_{\dot B^{\frac d2-1}_{2,1}}+\|a_0\|^h_{\dot B^{\frac dp}_{p,1}}
+\|u_0\|^h_{\dot B^{\frac d{p}-1}_{p,1}}\leq \eta,\end{equation}
then \eqref{R-E3}  has a unique global-in-time  solution $(a,u)$  in the space $X_p$ defined by
$$\displaylines{
X_{p}\triangleq\{(a,u)|(a,u)^\ell\in \wt\cC_b(\R_+;\dot B^{\frac d2-1}_{2,1})\cap  L^1(\R_+;\dot B^{\frac d2+1}_{2,1}),
a^h\in \wt\cC_b(\R_+;\dot B^{\frac dp}_{p,1})\cap L^1(\R_+;\dot B^{\frac dp+2}_{p,1}),
\cr u^h\in  \wt\cC_b(\R_+;\dot B^{\frac dp-1}_{p,1})
\cap L^1(\R_+;\dot B^{\frac dp+1}_{p,1})\}\cdotp}
$$ Furthermore, there exists a constant $c_0$ so that $(a,u)$ belongs to the space $$Y_p\triangleq\{(a,u)\in X_p|\Gop (a,u)\in X_p\}\cdotp$$
\end{thm}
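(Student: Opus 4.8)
\textbf{Plan of proof for Theorem \ref{thm4.1}.}
The plan is to mimic the structure of the proof of Theorem \ref{thm3.1}, but working in the hybrid Besov framework that underlies the $L^p$ theory of the compressible Navier--Stokes(--Korteweg) system: low frequencies (below the threshold $2^{k_0}$) are measured in $L^2$-based spaces, while high frequencies are measured in $L^p$-based spaces. First I would recall (or re-establish) the global existence of $(a,u)$ in $X_p$ for data satisfying $\cX_{p,0}\le\eta$; this is the analogue of Theorem \ref{thm1.1} in the $L^p$ setting, and following Haspot's idea one introduces the effective velocity $w\triangleq u-(\text{something like }(-\Delta)^{-1}\nabla)\times$(pressure/capillary combination) so that the equation for $w$ decouples from the density to leading order and the density equation becomes a damped transport-parabolic equation. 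The capillary term $\kab\nabla\Delta a$ provides, at high frequencies, full parabolic smoothing of the density (this is exactly the gain recorded in Lemma \ref{lem3.1} in the $L^2$ case), so the high-frequency block of the linearized system behaves like a heat flow with exponent $2$ and dissipation rate $c_0 2^{2q}$ on dyadic block $q$.

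Next, the Gevrey part. I would set $A\triangleq\Gop a$, $U\triangleq\Gop u$, localize the (reformulated) system in frequency via $\ddq$, run the energy argument of Lemma \ref{lem3.1} block by block, multiply by $e^{\sqrt{c_0 t}|\xi|_1}$, and use the by-now-standard trick of writing
$$e^{\sqrt{c_0 t}|\xi|_1}e^{-c_0|\xi|^2(t-\tau)}=\Bigl(e^{\sqrt{c_0(t-\tau)}|\xi|_1+\frac12 c_0(t-\tau)\Delta}\Bigr)\,e^{-\sqrt{c_0}(\sqrt{t-\tau}+\sqrt\tau-\sqrt t)|\xi|_1}\,e^{\frac12 c_0(t-\tau)\Delta}e^{\sqrt{c_0\tau}|\xi|_1}$$
so that Lemma \ref{lem3.2} absorbs the concave correction, Lemma \ref{lem3.3} absorbs the $e^{\frac12 a\Delta+\sqrt a\Lambda_1}$ multiplier (note this is where $1<p<\infty$, hence $p\ne\infty$, is used), and the residual $e^{\frac12 c_0(t-\tau)\Delta}\ddq$ yields the decay $e^{-c_1(t-\tau)2^{2q}}$ with $c_1=\frac9{32}c_0$. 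Doing this for the high-frequency block gives the $X_p$-type bound for $(A,U)^h$ in terms of $\Goptau$ applied to the nonlinear terms $f,g$; for the low-frequency block one repeats the same scheme in the $L^2$-based spaces, exactly as in Lemma \ref{lem3.6}. The upshot is an a priori estimate
$$\|\Gop(a,u)\|_{X_p}\lesssim \cX_{p,0}+\bigl(1+\|\Gop(a,u)\|_{X_p}\bigr)\|\Gop(a,u)\|_{X_p}^2,$$
and the constraint $p\le\min(4,2d/(d-2))$ (with $p\ne4$ if $d=2$) is precisely what is needed for the product and composition estimates below to close at the critical regularity.

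The nonlinear estimates are then handled by Propositions \ref{prodlaws1}, \ref{prodlaws2}, \ref{prop3.4}, \ref{prop3.6} and Lemmas \ref{l:compo}, \ref{l:compo2}, applied separately to low and high frequencies. The subtlety, compared with the $L^2$ case, is that a product of a low-frequency ($L^2$-type) factor and a high-frequency ($L^p$-type) factor lands partly in $L^2$-type and partly in $L^p$-type spaces, so one must carefully split $fg=f^\ell g^\ell+f^\ell g^h+f^h g^\ell+f^h g^h$, use the embedding $\dot B^{\frac d2-1}_{2,1}\hookrightarrow \dot B^{\frac dp-1}_{p,1}$ (valid since $p\ge2$) on the low-frequency outputs when they need to be re-read in the $L^p$ block, and exploit that $\Gop$ commutes with $\ddq$ and with the truncation into low/high frequencies. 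The composition terms $g_2,\dots,g_5$ require, as in Lemma \ref{lem3.6}, the smallness condition $\|\Gop a\|_{\wt L^\infty(\dot B^{d/p}_{p,1})}\le R_0$ to invoke Lemma \ref{l:compo}; the problematic piece $\nabla\tka(a)\cdot\nabla a$ in $g_5$ is again rewritten as $\tka'(a)\nabla a\cdot\nabla a$ so that the regularity index stays $\le d/p$. Finally, one closes everything by the Banach fixed point theorem in a small ball of $Y_p$: stability of the ball and the contraction estimate are proved exactly as in Steps 1 and 2 of the proof of Theorem \ref{thm3.1}, with Lemma \ref{l:compo2} providing the Lipschitz bounds on the composition terms.

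\textbf{Main obstacle.} The genuinely delicate point is the \emph{low-high frequency coupling in the nonlinear Gevrey estimates}: one has to verify that every bilinear term in $f$ and $g$, once multiplied by $\Goptau$ and once decomposed according to frequency localization, can be allocated to the correct ($L^2$- or $L^p$-based) component of $X_p$ with the correct regularity index, using only the embeddings available for $p\in[2,\min(4,2d/(d-2))]$. In particular, the high-high interaction feeding into the low-frequency block (which requires $R(f,g)$ estimates with a gain of $d(1-2/p)$ derivatives, hence the upper bound on $p$) and the effective-velocity reformulation (which mixes $a$ and $u$ and so must be shown compatible with the Gevrey operator $\Gop$) are where the bookkeeping is heaviest; everything else is a routine transcription of Section \ref{sec:3}.
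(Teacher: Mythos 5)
Your overall architecture matches the paper's: hybrid low/high frequency spaces, Haspot-type effective velocity, the Gevrey factorization absorbed by Lemmas \ref{lem3.2}--\ref{lem3.3}, hybrid paraproduct/remainder estimates (which is indeed where the constraint $p\le\min(4,2d/(d-2))$ enters), and a fixed point in $Y_p$. But there is a genuine gap in your treatment of the high-frequency linear analysis, in two related places.

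First, your description of the effective velocity is the one adapted to the \emph{classical} compressible Navier--Stokes system: you say the density equation "becomes a damped transport-parabolic equation." For the Korteweg system that is not enough: membership in $X_p$ requires $a^h\in L^1(\R_+;\dot B^{d/p+2}_{p,1})$, a gain of \emph{two} derivatives over $\wt L^\infty(\dot B^{d/p}_{p,1})$, which damping cannot produce. The paper's actual mechanism is a two-step diagonalization: $v\triangleq\cQ u+(-\Delta)^{-1}\nabla a$ and then $w\triangleq v+\alpha\nabla a$ with $\alpha(1-\alpha)=\kab$, i.e.\ $\alpha=\frac12\bigl(1+\sqrt{1-4\kab}\bigr)$. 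This turns the $(a,\cQ u)$ block into heat equations for $w$ and $v$ with diffusion coefficients $1-\alpha$ and $\kab/(1-\alpha)$, which are \emph{complex} as soon as $\kab>1/4$; one then recovers $\nabla a=(w-v)/\alpha$ with full parabolic smoothing. Making this rigorous requires a maximal regularity lemma for the heat equation with complex diffusion coefficient of positive real part (Lemma \ref{lem4.1} in the paper, proved by an integration-by-parts kernel estimate). This ingredient is absent from your plan, and without it the high-frequency $L^p$ estimate for $a$ does not close.

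Second, for the Gevrey estimate you propose to "run the energy argument of Lemma \ref{lem3.1} block by block" and then sum. That argument is a pointwise-in-$\xi$ Lyapunov functional converted to norms via Plancherel; it only yields $L^2$ bounds and is used in the paper \emph{only} for the low-frequency block ($j\le k_0$), which is measured in $\dot B^{d/2-1}_{2,1}$. For the high-frequency block, measured in $L^p$ with $p$ possibly $>2$, the paper instead applies $\Gop$ to the Duhamel formulas of the diagonalized complex heat equations for $w$, $v$ and $\cP u$, and combines the factorization $e^{\sqrt{c_0t}\Lambda_1}=e^{\sqrt{c_0t}\Lambda_1+\frac12\wt\alpha t\Delta}\,e^{-\sqrt{c_0}(\sqrt{t-\tau}+\sqrt\tau-\sqrt t)\Lambda_1}\cdots$ with the $L^p$ kernel bound $\|\ddj e^{\beta t\Delta}z\|_{L^p}\le Ce^{-c\,\mathrm{Re}\beta\,t2^{2j}}\|\ddj z\|_{L^p}$ and Lemmas \ref{lem3.2}--\ref{lem3.3}. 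You have correctly identified the right absorption lemmas, but you have attached them to a linear estimate that does not exist in $L^p$; the missing idea is precisely that the Gevrey multiplier must be propagated through the effective-velocity/complex-heat reformulation rather than through the $L^2$ energy functional.
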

\begin{rem}\label{rem4.1}
In the  physical dimensions $d=2,3,$
 Condition \eqref{R-E64} allows us to consider the case  $p>d,$  and  the
velocity regularity exponent $d/p-1$  thus  becomes negative. Therefore, our result applies to \emph{large} highly oscillating initial velocities (see e.g. \cite{CD} for more explanations).
\end{rem}


\subsection{Global estimates in $X_p$ for \eqref{R-E3}}

As in Section 2, the proof of Theorem \ref{thm4.1} is based on the fixed point theorem in complete metric spaces. 
Another important  ingredient is the following endpoint maximal regularity property 
of the heat equation with \emph{complex} diffusion coefficient.
\begin{lem}\label{lem4.1}
Let $T>0,$ $s\in \mathbb{R}$ and $1\leq \rho_2, p,r\leq\infty$. Let $u$ satisfy
\begin{equation}\label{R-E65}
\left\{\begin{array}{l}
\partial_t u-\beta\Delta u=f,\\[1ex]
u|_{t=0}=u_0(x),
\end{array}\right.
\end{equation}
where $\beta\in \mathbb{C}$ is a constant parameter with $\mathrm{Re}\,\beta>0$. Then, there exists  a constant $C$ depending only on $d$ and  such that for all $\rho_1\in [\rho_2,\, \infty]$, one has
\begin{equation}\label{R-E66}
(\mathrm{Re}\,\beta)^{\frac{1}{\rho_1}}\|u\|_{\tilde{L}^{\rho_1}_{T}(\dot{B}^{s+\frac{2}{\rho_1}}_{p,r})}\leq C \Big(\|u_0\|_{\dot{B}^{s}_{p,r}}+(\mathrm{Re}\,\beta)^{\frac{1}{\rho_2}-1}\|f\|_{\tilde{L}^{\rho_2}_{T}(\dot{B}^{s-2+\frac{2}{\rho_2}}_{p,r})}\Big)\cdotp
\end{equation}
\end{lem}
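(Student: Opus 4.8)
The plan is to reduce \eqref{R-E66} to the standard (real-coefficient) endpoint maximal regularity estimate for the heat semigroup via a Littlewood-Paley localization argument. First I would apply the dyadic block $\ddj$ to \eqref{R-E65} and use Duhamel's formula to write $\ddj u(t)=e^{\beta t\Delta}\ddj u_0+\int_0^t e^{\beta(t-\tau)\Delta}\ddj f(\tau)\,d\tau$. The crucial pointwise-in-frequency ingredient is a Bernstein-type decay estimate for the localized complex heat semigroup: there is a constant $c>0$ depending only on $d$ (and the ratio controlling the angle of $\beta$, but since we only need $\mathrm{Re}\,\beta>0$ and the statement tracks $\mathrm{Re}\,\beta$ explicitly, one writes the Fourier multiplier $e^{\beta t|\xi|^2}$ and bounds it by $e^{-c(\mathrm{Re}\,\beta)t2^{2j}}$ on the annulus $|\xi|\thickapprox 2^j$). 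This follows because on the support of $\widehat{\ddj u}$ one has $\mathrm{Re}(\beta t|\xi|^2)=-(\mathrm{Re}\,\beta)t|\xi|^2\leq -c(\mathrm{Re}\,\beta)t2^{2j}$, combined with the standard fact that $\mathcal F^{-1}(\chi(2^{-j}\cdot)e^{\beta t|\cdot|^2})$ has $L^1$ norm bounded independently of $j,t$ and $\beta$ (using the homogeneity, this reduces to the $L^1$ boundedness of $\mathcal F^{-1}(\chi\, e^{z|\cdot|^2})$ uniformly for $z$ in the closed right half-plane intersected with the relevant sector, which is a classical Mikhlin-type bound).

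Granting that localized decay, the proof proceeds exactly as in the real case (see e.g. Chapter 2 of \cite{BCD}): taking the $L^p_x$ norm and using Young's inequality in time,
$$
\|\ddj u\|_{L^{\rho_1}_T(L^p)}\lesssim \Big(\int_0^T e^{-c\rho_1(\mathrm{Re}\,\beta)t2^{2j}}\,dt\Big)^{1/\rho_1}\|\ddj u_0\|_{L^p}
+\Big\|\int_0^t e^{-c(\mathrm{Re}\,\beta)(t-\tau)2^{2j}}\|\ddj f(\tau)\|_{L^p}\,d\tau\Big\|_{L^{\rho_1}_T}.
$$
The first time integral is bounded by $\bigl(c\rho_1(\mathrm{Re}\,\beta)2^{2j}\bigr)^{-1/\rho_1}$, which after multiplying by $2^{j(s+2/\rho_1)}$ yields the factor $2^{js}$ times $(\mathrm{Re}\,\beta)^{-1/\rho_1}$, matching the left side. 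For the second term one applies Young's convolution inequality with exponents $1+1/\rho_1=1/m+1/\rho_2$ where $m\in[1,\infty]$ (this is where the hypothesis $\rho_1\geq\rho_2$ is used), bounding $\|e^{-c(\mathrm{Re}\,\beta)\cdot 2^{2j}}\|_{L^m_T}\lesssim\bigl((\mathrm{Re}\,\beta)2^{2j}\bigr)^{-1/m}$; collecting the powers of $2^{2j}$ and $\mathrm{Re}\,\beta$ gives precisely $2^{j(s-2+2/\rho_2)}\|\ddj f\|_{L^{\rho_2}_T(L^p)}$ times $(\mathrm{Re}\,\beta)^{1/\rho_2-1-1/\rho_1}$.

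Finally I would multiply the resulting estimate for each $j$ by $2^{j(s+2/\rho_1)}$ and take the $\ell^r(\Z)$ norm; since the bound is a sum of two terms each already in ``$2^{j(\cdot)}\|\ddj\cdot\|$'' form, the $\ell^r$ summation (using the triangle inequality for $\ell^r$, or Minkowski if one is careful about the convolution-in-$j$-free structure here) directly produces $\|u_0\|_{\dot B^s_{p,r}}$ and $\|f\|_{\wt L^{\rho_2}_T(\dot B^{s-2+2/\rho_2}_{p,r})}$ on the right and $\|u\|_{\wt L^{\rho_1}_T(\dot B^{s+2/\rho_1}_{p,r})}$ on the left, which is \eqref{R-E66}. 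The main obstacle is the first step: establishing that the kernel of the localized complex heat semigroup has uniformly bounded $L^1$ norm with the sharp decay rate $e^{-c(\mathrm{Re}\,\beta)t2^{2j}}$ and a constant $c$ independent of $\beta$ in the half-plane $\mathrm{Re}\,\beta>0$; once that Bernstein-type lemma is in hand everything else is the routine dyadic bookkeeping identical to the real-coefficient case.
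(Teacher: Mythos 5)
Your argument is correct and follows essentially the same route as the paper: the key step in both is the localized decay estimate $\|\ddj e^{\beta t\Delta}z\|_{L^p}\leq Ce^{-c(\mathrm{Re}\,\beta)\,t2^{2j}}\|\ddj z\|_{L^p}$, obtained from an $L^1$ kernel bound on the annulus via integration by parts, followed by Duhamel and the standard Young-in-time bookkeeping. (Two minor points: the symbol of $e^{\beta t\Delta}$ is $e^{-\beta t|\xi|^2}$, not $e^{\beta t|\xi|^2}$ as you wrote; and, as you parenthetically note, the uniformity of the constant really only holds for $\beta$ in a fixed sector $|\beta|\lesssim \mathrm{Re}\,\beta$, since the $\xi$-derivatives of the symbol produce powers of $|\beta|t$ --- this suffices here because $\beta$ is a fixed complex number in the application.)
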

\begin{proof}
 We claim that there exists some absolute constants $c$ and $C$  such that
\begin{equation}\label{R-E67}
\|\ddj e^{\beta t\Delta} z\|_{L^p}\leq Ce^{-c\mathrm{Re}\beta\, t2^{2j}}\|\ddj z\|_{L^p},\qquad t\geq0,\ \ j\in\Z.\end{equation}
Indeed, using a suitable rescaling, it suffices to prove \eqref{R-E67} for $j=0.$ Now, if we fix some smooth function 
$\wt\varphi$ compactly supported away from $0$ and with value $1$ on $\varphi,$ then we may write
$$\begin{aligned}
e^{\beta t\Delta}\dot\Delta_0z&=\cF^{-1}\left(\wt\varphi e^{-\beta t|\cdot|^2}
\hat{\dot\Delta_0z}\right)\\[1ex]
 & =  g_{\beta t} \star \dot\Delta_0z\quad\hbox{with}\quad
 g_{\beta t}(x) \triangleq
(2\pi)^{-d} \Int  e^{ix\cdot\xi}\wt\varphi(\xi)
e^{-\beta t|\xi|^2}d\xi.
\end{aligned}
$$
Then, integrating by parts, we discover that for all $x\in\R^d,$ we have
$$g_{\beta t}(x)=(1+|x|^2)^{-d}\int_{\R^d}e^{ix\cdot\xi}
 (\Id-\Delta_\xi)^d  \Bigl(\wt\varphi(\xi)e^{-\beta t|\xi|^2}\Bigr)d\xi.
$$
Expanding the last term  and using the fact that  
integration may be performed on some annulus, we get 
for some positive constants $c,$ $C$ and $C',$  
$$\|g_{\beta t}\|_{L^1}\leq 
C\|(1+|\cdot|^2)^dg_{\beta t}\|_{L^\infty}\leq
C'e^{-ct\mathrm{Re}\,\beta}.$$
Then, using the convolution 
inequality $L^1\star L^p\rightarrow L^p$ yields \eqref{R-E67}.
From it, we get 
\begin{equation}\label{R-68}
\|\ddj u(t)\|_{L^p}\leq C\biggl(e^{-c \mathrm{Re}\beta\,2^{2j}t}\|\ddj u_0\|_{L^p}+\int^t_{0}e^{-c\mathrm{Re}\beta\,2^{2j}(t-\tau)}\|\ddj f(\tau)\|_{L^p}\,d\tau\biggr)\cdotp
\end{equation}
Then, \eqref{R-E66} follows from exactly the same calculations as in  \cite{BCD}.
\end{proof}

Combining Lemma \ref{lem4.1}  with the low frequency estimates of the previous section
and introducing some suitable effective velocity
will enable us to  get  the following  result.
\begin{lem}\label{lem4.2} There exists some constant $C$ such that for all $t\geq0,$ 
\begin{eqnarray}\label{R-E69}
\cX_{p}(t)\leq C (\cX_{p,0}+\cX_{p}^2(t)+\cX_{p}^3(t)),
\end{eqnarray}
where
\begin{multline}\label{R-E70}
 \cX_{p}(t)\triangleq\|(a,u)\|^{\ell}_{\wt L^\infty_{t}(\dot B^{\frac d2-1}_{2,1})}+\|(a,u)\|^{\ell}_{L^1_{t}(\dot B^{\frac d2+1}_{2,1})}
\\+\|a\|^{h}_{\wt L^\infty_{t}(\dot B^{\frac dp}_{p,1})\cap L^1_{t}(\dot B^{\frac dp+2}_{p,1})}
+\|u\|^{h}_{\wt L^\infty_{t}(\dot B^{\frac dp-1}_{p,1})\cap L^1_{t}(\dot B^{\frac dp+1}_{p,1})}.
\end{multline}
\end{lem}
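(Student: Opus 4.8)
The plan is to prove the a priori estimate \eqref{R-E69} by splitting the analysis of System \eqref{R-E3} into a low-frequency regime (below the threshold $2^{k_0}$), where the density enjoys $L^2$-based parabolic smoothing exactly as in Lemma \ref{lem3.1}, and a high-frequency regime, where one works in the $L^p$ framework of Haspot. For the low frequencies, I would apply $\dot\Delta_q$ with $q\le k_0$ to the system, multiply by the Gevrey weight $e^{\sqrt{c_0 t}\,|\xi|_1}$, and reproduce verbatim the computation of Lemma \ref{lem3.6}: the estimate \eqref{estimloc} together with the decomposition of the exponential weight as $e^{\sqrt{c_0 t}\Lambda_1+\frac12c_0t\Delta}\cdot e^{\frac12c_0t\Delta}$ and Lemmas \ref{lem3.2}–\ref{lem3.3} yields control of $\|(a,u)\|^\ell_{\wt L^\infty_t(\dot B^{d/2-1}_{2,1})}+\|(a,u)\|^\ell_{L^1_t(\dot B^{d/2+1}_{2,1})}$ by the low-frequency data norm plus $\|\Gop(f,g)\|^\ell_{L^1_t(\dot B^{d/2-1}_{2,1})}$.

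For the high frequencies the key device is Haspot's \emph{effective velocity}: one introduces a new unknown of the form $w\triangleq u+\nabla(\mathrm{Id}-\kab\Delta)(-\Delta)^{-1}\,(\text{something involving }a)$, chosen so that in the equation for $w$ the coupling with $a$ is quasi-diagonalized and $w$ solves a heat equation $\partial_t w-\mu\Delta w=(\text{good terms})$, while $a$ itself then solves a damped equation $\partial_t a+(\text{positive})(\mathrm{Id}-\kab\Delta)a=\dots$ that gives parabolic gain of two derivatives at high frequency. Because the relevant Fourier multipliers are smooth of degree zero away from the origin, they commute with $\Gop$ and with $\dot\Delta_q$ and are bounded on $L^p$, so applying $\Gop$ costs nothing structurally; one then invokes Lemma \ref{lem4.1} (endpoint maximal regularity for the heat semigroup, here with the complex-coefficient version to absorb the lower-order pieces and the weight) to obtain the high-frequency part of $\cX_p(t)$ controlled by the high-frequency data and by $\|\Gop(f,g)\|^h$ in the appropriate $\tilde L^1$–Besov norm. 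The splitting of $\Gop e^{\mu(t-\tau)\Delta}$ via the kernel bound $M_1$ of Lemma \ref{lem3.2} is what lets the Gevrey weight be transferred onto the source terms $F,G$ at time $\tau$.

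It then remains to estimate the nonlinear terms $f=-\div(au)$ and $g=\sum_{j=1}^5 g_j$ from \eqref{fetg}–\eqref{fcts}, with the Gevrey weight, in the hybrid norm $\cX_p$. Here I would use the paraproduct/remainder estimates of Propositions \ref{prodlaws1}–\ref{prodlaws2}, the product laws of Propositions \ref{prop3.4} and \ref{prop3.6}, and the composition Lemmas \ref{l:compo}–\ref{l:compo2}, being careful — as in the proof of Lemma \ref{lem3.6} — to rewrite the quadratic-in-$\nabla a$ piece of $g_5$ as $\Gop(\tka'(a)\nabla a\cdot\nabla a)$ so that the composition lemma is only ever applied with regularity index $\le d/p$. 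The smallness assumption built into $\cX_p(t)\le$ (small) ensures the hypotheses \eqref{R-E55}, \eqref{R-E55b} of the composition lemmas hold. Because the norm $\cX_p$ mixes an $L^2$-Besov low-frequency part with an $L^p$-Besov high-frequency part, each product must be bounded by splitting both factors into low and high frequencies and using embeddings ($\dot B^{d/2}_{2,1}\hookrightarrow\dot B^{d/p}_{p,1}$ for low frequencies when $p\ge2$, and the reverse at high frequencies) to land the product in the correct space; this bookkeeping, rather than any single inequality, is the main obstacle, and it is exactly the point where the constraint $p\in[2,\min(4,2d/(d-2))]$ (with $p\neq4$ if $d=2$) is forced. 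Collecting the resulting bilinear and trilinear bounds gives the $\cX_p^2(t)+\cX_p^3(t)$ right-hand side of \eqref{R-E69}.
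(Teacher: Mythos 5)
Your outline follows the paper's proof in its broad architecture: low frequencies handled by the $L^2$ analysis underlying Lemma \ref{lem3.1}, high frequencies by Haspot's effective velocity combined with the maximal regularity of Lemma \ref{lem4.1}, and nonlinear terms by paraproduct/remainder/composition estimates in the hybrid norm, with the restriction $2\le p\le\min(4,2d/(d-2))$ surfacing exactly where you say it does. One correction of bookkeeping first: \eqref{R-E69} concerns the plain norm $\cX_p(t)$ of $(a,u)$ --- no Gevrey weight appears anywhere in Lemma \ref{lem4.2}. You have systematically inserted $\Gop$, $F$, $G$, Lemma \ref{lem3.2} and the Gevrey product laws; that is the content of Lemma \ref{lem4.3} (the bound in $Y_p$), which the paper proves afterwards and which requires the extra condition $p<2d$ through Proposition \ref{proplaws5}. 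What is needed here is only the unweighted version of each step (classical paraproduct/remainder continuity between $L^p$- and $L^2$-based Besov spaces, and Proposition \ref{prop2.3} for compositions); the paper needs this unweighted lemma first in order to construct the solution in $X_p$ at all.

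The genuine gap is that the high-frequency diagonalization --- the only new computation compared with the non-capillary case --- is left as a black box, and your description of its mechanism is not what actually happens. The paper sets $v\triangleq\cQ u+(-\Delta)^{-1}\nabla a$ and then $w\triangleq v+\alpha\nabla a$, where $\alpha$ must solve $\alpha(1-\alpha)=\kab$, i.e.\ $\alpha=\frac12\bigl(1+\sqrt{1-4\kab}\bigr)$. Then $w$ solves a heat equation with diffusion coefficient $1-\alpha$, $v$ solves one with coefficient $\kab/(1-\alpha)$, both of positive real part for every $\kab>0$, and the two derivatives gained by $a$ at high frequency come from $\nabla a=(w-v)/\alpha$ together with the parabolic gain on $w$ and $v$ --- not from a damped equation $\partial_t a+(\text{positive})(\Id-\kab\Delta)a=\dots$, which $a$ does not satisfy. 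In particular, the complex-coefficient version of Lemma \ref{lem4.1} is forced by the fact that $\alpha\notin\R$ as soon as $\kab>1/4$, not by any need to ``absorb lower-order pieces and the weight''. Without exhibiting $\alpha$ and checking that $\mathrm{Re}\,(1-\alpha)>0$ and $\mathrm{Re}\,\bigl(\kab/(1-\alpha)\bigr)>0$, the high-frequency estimate \eqref{estimHF} --- and hence the lemma --- is not established. You should also make explicit that the remaining lower-order couplings ($v$, $\alpha\nabla a$ and $(-\Delta)^{-1}\nabla a$ on the right-hand sides) are absorbed by choosing the threshold $k_0$ large enough, which is what fixes $k_0$ in the statement of Theorem \ref{thm4.1}.
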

\begin{proof}
We start from the linearized system \eqref{R-E3}:
$$
\begin{cases}
 \d_ta+\div u=f,\\
\d_t u-\cAb u+ \nabla a-\kab \nabla\Delta a=g. 
\end{cases}
$$
The incompressible part of the velocity fulfills the heat equation
\begin{equation}
 \d_t\cP u-\mub\Delta \cP u=\cP g.
 \label{eq:incompr}
\end{equation}
Hence, using 
the notation $z_j:=\ddj z$ for $z$ in $\cS',$ we see that
 there exists a constant $c>0$ such that we have for all $j\in\Z,$
\begin{equation}\label{R-E71}
\|\cP u_j(t)\|_{L^p}\leq Ce^{-c2^{2j}t}\biggl(\|\cP u_j(0)\|_{L^p}+\int_0^te^{c2^{2j}\tau}\|\cP g_j\|_{L^p}\,d\tau\biggr),
\end{equation}
which leads for all $T>0,$  after summation on $j\geq k_0,$ to 
\begin{equation}\label{R-E72}
\|\cP u\|^{h}_{\tilde{L}^{\infty}_{T}(\dot{B}^{\frac{d}{p}-1}_{p,1})}+
\|\cP u\|^{h}_{L^{1}_{T}(\dot{B}^{\frac{d}{p}+1}_{p,1})}\lesssim \|\cP u_0\|^h_{\dot{B}^{\frac{d}{p}-1}_{p,1}}
+\|\cP g\|^h_{L^1_{T}(\dot{B}^{\frac{d}{p}-1}_{p,1})}.
\end{equation}

To estimate $a$ and $\cQ u,$  following Haspot in \cite{H2},  we introduce the modified velocity
$$v\triangleq \cQ u+(-\Delta)^{-1}\nabla a$$
so that $\div v=\div u-a,$  and discover that, since $\bar\lambda+2\bar\mu=1,$
$$\left\{\begin{array}{l}\d_t\nabla a+ \nabla a + \Delta v=\nabla f,\\[1ex]
\d_tv-\Delta v-\kab\Delta\nabla  a=\cQ g+(-\Delta)^{-1}\nabla f+v-(-\Delta)^{-1}\nabla a.
\end{array}\right.$$
In the Fourier space, the eigenvalues of the associated matrix read (with the convention that
$\sqrt r:=i\sqrt{|r|}$ if $r<0$):
$$\lambda^{\pm}(\xi)=\frac12\Bigl(1+|\xi|^2\pm \sqrt{(1-4\kab)|\xi|^4-2|\xi|^2+1}\Bigr)\cdotp$$
Therefore, in the high frequency regime, we expect that for any $\kab>0,$
the system has a parabolic behavior.
This may be easily justified by considering suitable linear combinations of $v$ and $\nabla a.$
Indeed, for all $\alpha\in\C,$ we have
$$\d_t(v+\alpha\nabla a)-(1-\alpha)\Delta v-\kab\Delta\nabla a+\alpha\nabla a=\alpha\nabla f+\cQ g+(-\Delta)^{-1}\nabla f+v-(-\Delta)^{-1}\nabla a.
$$
Therefore, if we set
$$
w\triangleq v+\alpha\nabla a\with \alpha\ \hbox{ satisfying }\ \alpha=\frac\kab{1-\alpha},
$$
then we have
$$
\d_tw-(1-\alpha)\Delta w=-\alpha \nabla a+\alpha\nabla f+\cQ g+(-\Delta)^{-1}\nabla f+v-(-\Delta)^{-1}\nabla a.
$$
A possible choice is
$$
\alpha=\frac12\bigl(1+\sqrt{1-4\kab}\bigr)\quad\hbox{so that }\
1-\alpha =\frac12\bigl(1-\sqrt{1-4\kab}\bigr).
$$

Obviously, the real part of $1-\alpha$ is positive for any value of $\kab$.
Hence one can take advantage of \eqref{R-E66} and get
\begin{multline}\label{R-E73}\|w\|^{h}_{\tilde{L}^{\infty}_{T}(\dot{B}^{\frac{d}{p}-1}_{p,1})}+
\|w\|^{h}_{L^{1}_{T}(\dot{B}^{\frac{d}{p}+1}_{p,1})}\lesssim  \|w_0\|^{h}_{\dot{B}^{\frac{d}{p}-1}_{p,1}}\\+\|\alpha\nabla f+\cQ g+(-\Delta)^{-1}\nabla f\|^{h}_{L^{1}_{T}(\dot{B}^{\frac{d}{p}-1}_{p,1})}+\|v-\alpha\nabla a-(-\Delta)^{-1}\nabla a\|^{h}_{L^{1}_{T}(\dot{B}^{\frac{d}{p}-1}_{p,1})}.
\end{multline}
Because $\nabla(-\Delta)^{-1}$ is an homogeneous Fourier multiplier of degree $-1,$  we have
$$\begin{aligned}
\|\alpha\nabla f+\cQ g+(-\Delta)^{-1}\nabla f\|^{h}_{L^{1}_{T}(\dot{B}^{\frac{d}{p}-1}_{p,1})}&\lesssim  \|f\|^{h}_{L^{1}_{T}(\dot{B}^{\frac{d}{p}}_{p,1})}+
\|f-\mathrm{div}g\|^{h}_{L^{1}_{T}(\dot{B}^{\frac{d}{p}-2}_{p,1})}\nonumber\\&\lesssim  \|f\|^{h}_{L^{1}_{T}(\dot{B}^{\frac{d}{p}}_{p,1})}+\|g\|^{h}_{L^{1}_{T}(\dot{B}^{\frac{d}{p}-1}_{p,1})}.
\end{aligned}$$
Next, let us observe that, owing to the high frequency cut-off, we have for some
universal constant $C$,
$$\displaylines{
\|\alpha\nabla a\|^{h}_{L^{1}_{T}(\dot{B}^{\frac{d}{p}-1}_{p,1})}\leq C 2^{-2k_{0}}\|a\|^{h}_{L^{1}_{T}(\dot{B}^{\frac{d}{p}+2}_{p,1})},\qquad
\|v\|^{h}_{L^{1}_{T}(\dot{B}^{\frac{d}{p}-1}_{p,1})} \leq C 2^{-2k_{0}}\|v\|^{h}_{L^{1}_{T}(\dot{B}^{\frac{d}{p}+1}_{p,1})}
\cr\andf \|(-\Delta)^{-1}\nabla a\|^{h}_{L^{1}_{T}(\dot{B}^{\frac{d}{p}-1}_{p,1})}\leq C 2^{-4k_{0}}\|a\|^{h}_{L^{1}_{T}(\dot{B}^{\frac{d}{p}+2}_{p,1})}.}$$
Consequently, it follows that
\begin{multline}\label{R-E74}
\|w\|^{h}_{\tilde{L}^{\infty}_{T}(\dot{B}^{\frac{d}{p}-1}_{p,1})}+
\|w\|^{h}_{L^{1}_{T}(\dot{B}^{\frac{d}{p}+1}_{p,1})}\lesssim  \|w_0\|^{h}_{\dot{B}^{\frac{d}{p}-1}_{p,1}}
+\|f\|^{h}_{L^{1}_{T}(\dot{B}^{\frac{d}{p}}_{p,1})}+\|g\|^{h}_{L^{1}_{T}(\dot{B}^{\frac{d}{p}-1}_{p,1})}\\
+2^{-2k_{0}}\|a\|^{h}_{L^{1}_{T}(\dot{B}^{\frac{d}{p}+2}_{p,1})}+2^{-2k_{0}}\|v\|^{h}_{L^{1}_{T}(\dot{B}^{\frac{d}{p}+1}_{p,1})}
+2^{-4k_{0}}\|a\|^{h}_{L^{1}_{T}(\dot{B}^{\frac{d}{p}+2}_{p,1})}.
\end{multline}
Now, in order to estimate $v,$ we use the fact that
\begin{equation}\label{eq:Da}\nabla a =\frac{w-v}\alpha\end{equation} so that the equation for $v$ rewrites
$$
\d_t v-\frac{\alpha-\kab}{\alpha}\:\Delta v=\frac\kab\alpha \Delta w +\nabla(-\Delta)^{-1}(f-\mathrm{div}g)+ v-(-\Delta)^{-1}\nabla a.
$$
The important observation is that
$$
\frac{\alpha-\kab}{\alpha}=\frac\kab{1-\alpha}\cdotp
$$
Hence one can again take advantage of \eqref{R-E66}, and get
$$\displaylines{
\|v\|^{h}_{\tilde{L}^{\infty}_{T}(\dot{B}^{\frac{d}{p}-1}_{p,1})}+
\|v\|^{h}_{L^{1}_{T}(\dot{B}^{\frac{d}{p}+1}_{p,1})}\lesssim\|v_0\|^{h}_{\dot{B}^{\frac{d}{p}-1}_{p,1}}
+\|\nabla(-\Delta)^{-1}(f-\mathrm{div}g)\|^{h}_{L^{1}_{T}(\dot{B}^{\frac{d}{p}-1}_{p,1})}\hfill\cr\hfill+\Big\|\frac\kab\alpha \Delta w+v-(-\Delta)^{-1}\nabla a\Big\|^h_{L^{1}_{T}(\dot{B}^{\frac{d}{p}-1}_{p,1})},}
$$
whence
\begin{multline}\label{R-E75}
\|v\|^{h}_{\tilde{L}^{\infty}_{T}(\dot{B}^{\frac{d}{p}-1}_{p,1})}+
\|v\|^{h}_{L^{1}_{T}(\dot{B}^{\frac{d}{p}+1}_{p,1})}\lesssim\|v_0\|^{h}_{\dot{B}^{\frac{d}{p}-1}_{p,1}}+\|g\|^{h}_{L^{1}_{T}(\dot{B}^{\frac{d}{p}-1}_{p,1})}+\|f\|^{h}_{L^{1}_{T}(\dot{B}^{\frac{d}{p}-2}_{p,1})}\\
+\|w\|^{h}_{L^{1}_{T}(\dot{B}^{\frac{d}{p}+1}_{p,1})}+2^{-2k_{0}}\|v\|^{h}_{L^{1}_{T}(\dot{B}^{\frac{d}{p}+1}_{p,1})}
+2^{-4k_{0}}\|a\|^{h}_{L^{1}_{T}(\dot{B}^{\frac{d}{p}+2}_{p,1})}.
\end{multline}
Plugging  \eqref{R-E74} in \eqref{R-E75} and taking $k_0$ large enough, we arrive at
$$
\|v\|^{h}_{\tilde{L}^{\infty}_{T}(\dot{B}^{\frac{d}{p}-1}_{p,1})}+
\|v\|^{h}_{L^{1}_{T}(\dot{B}^{\frac{d}{p}+1}_{p,1})}\lesssim\|v_0\|^{h}_{\dot{B}^{\frac{d}{p}-1}_{p,1}}+\|f\|^{h}_{L^{1}_{T}(\dot{B}^{\frac{d}{p}}_{p,1})}
+\|g\|^{h}_{L^{1}_{T}(\dot{B}^{\frac{d}{p}-1}_{p,1})}
+2^{-2k_{0}}\|a\|^{h}_{L^{1}_{T}(\dot{B}^{\frac{d}{p}+2}_{p,1})}.
$$
Then, inserting that latter inequality in \eqref{R-E74} and using \eqref{eq:Da}, we get 
$$
\|(\nabla a, v)\|^{h}_{\tilde{L}^{\infty}_{T}(\dot{B}^{\frac{d}{p}-1}_{p,1})}+\|(\nabla a, v)\|^{h}_{L^{1}_{T}(\dot{B}^{\frac{d}{p}+1}_{p,1})}
\lesssim \|(\nabla a_0, v_0)\|^h_{\dot{B}^{\frac{d}{p}-1}_{p,1}}+\|f\|^{h}_{L^{1}_{T}(\dot{B}^{\frac{d}{p}}_{p,1})}
+\|g\|^{h}_{L^{1}_{T}(\dot{B}^{\frac{d}{p}-1}_{p,1})}.$$
Finally, keeping  in mind that $u=v-(-\Delta)^{-1}\nabla a+\mathcal{P}u$,  we conclude that
\begin{multline}
 \|(\nabla a, u)\|^{h}_{\tilde{L}^{\infty}_{T}(\dot{B}^{\frac{d}{p}-1}_{p,1})}+\|(\nabla a, u)\|^{h}_{L^{1}_{T}(\dot{B}^{\frac{d}{p}+1}_{p,1})}
\\\lesssim \|(\nabla a_0, u_0)\|^h_{\dot{B}^{\frac{d}{p}-1}_{p,1}}+\|f\|^{h}_{L^{1}_{T}(\dot{B}^{\frac{d}{p}}_{p,1})}+\|g\|^{h}_{L^{1}_{T}(\dot{B}^{\frac{d}{p}-1}_{p,1})}.
\label{estimHF}
\end{multline}


Let us next turn to estimates for the nonlinear terms. For the high frequencies of $f,$ we just
write that
\begin{eqnarray}\label{R-E78}
\|f\|^{h}_{L^{1}_{T}(\dot{B}^{\frac{d}{p}}_{p,1})}&\!\!\!\lesssim\!\!\!& \|a\|_{L^\infty_{T}(L^\infty)}\|u\|_{L^{1}_{T}(\dot{B}^{\frac{d}{p}+1}_{p,1})}
+\|u\|_{L^2_{T}(L^\infty)}\|a\|_{L^{2}_{T}(\dot{B}^{\frac{d}{p}+1}_{p,1})}\nonumber\\&\!\!\!\lesssim\!\!\! &
\|a\|_{L^\infty_{T}(\dot B^{\frac dp}_{p,1})}\|u\|_{L^{1}_{T}(\dot{B}^{\frac{d}{p}+1}_{p,1})}+\|u\|_{L^2_{T}(\dot B^{\frac dp}_{p,1})}\|a\|_{L^{2}_{T}(\dot{B}^{\frac{d}{p}+1}_{p,1})}
\nonumber\\&\!\!\!\lesssim\!\!\! &\cX_p^2(T).
\end{eqnarray}
All terms in $g,$ but $\nabla(\wt \kappa(a)\Delta a)$ and $\nabla(\tka'(a)|\nabla a|^2)$ have been treated in e.g. \cite{H2}
for the classical compressible Navier-Stokes equations; they are bounded by the right-hand side of \eqref{R-E69}.
Now, regarding the high frequencies of these two capillary  terms, one can just use the fact that the space $\dot B^{\frac dp}_{p,1}$ is stable by product and composition, and  get
$$\|\nabla(\wt\kappa(a)\Delta a)\|^h_{L^1(\dot B^{\frac dp-1}_{p,1})}
\lesssim \|\wt\kappa(a)\Delta a\|_{L^1(\dot B^{\frac dp}_{p,1})}
\lesssim \|a\|_{L^\infty(\dot B^{\frac dp}_{p,1})}
\|\Delta a\|_{L^1(\dot B^{\frac dp}_{p,1})}.$$
Similarly,
\begin{multline}
 \|\nabla(\tka'(a)|\nabla a|^2)\|^h_{L^1(\dot B^{\frac dp-1}_{p,1})}
\lesssim \|\tka'(1)+(\tka'(a)-\tka'(1))|\nabla a|^2\|_{L^1(\dot B^{\frac dp}_{p,1})}\\
\lesssim (1+ \|a\|_{L^\infty(\dot B^{\frac dp}_{p,1})})\|\nabla a \|^2_{L^2(\dot B^{\frac dp}_{p,1})}.
\end{multline}
To handle the low frequencies, one can  use the fact that, owing to Lemma \ref{lem3.1},  
\begin{equation}
\|(a,u)\|_{\wt L_T^\infty(\dot B^{\frac d2-1}_{2,1})\cap L_T^1(\dot B^{\frac d2+1}_{2,1})}^\ell
\lesssim \|(a_0,u_0)\|_{\dot B^{\frac d2-1}_{2,1}}^\ell+\|(f,g)\|_{L_T^1(\dot B^{\frac d2-1}_{2,1})}^\ell.
\end{equation}
Again, taking advantage of prior works on the compressible Navier-Stokes equations, 
we just have to check that the capillary terms satisfy \eqref{R-E69}. 
Now, we have
$$
\|\nabla(\wt \kappa(a)\Delta a)\|^\ell_{L^1(\dot B^{\frac d2-1}_{2,1})}
\lesssim \|\wt \kappa(a)\Delta a\|^\ell_{L^1(\dot B^{\frac d2}_{2,1})}.$$
In order to estimate the r.h.s., we use the following Bony decomposition:
$$\wt \kappa(a)\Delta a =T_{\wt\kappa(a)} \Delta a +T_{\Delta a}\wt\kappa(a)+ R(\wt\kappa (a), \Delta a).$$
Recall that $T: \dot B^{\frac dp-1}_{p,1}\times \dot B^{\frac dp}_{p,1}\to \dot B^{\frac d2-1}_{2,1}$
for $2\leq p \leq\min(4,\frac{2d}{d-2})\cdotp$ Hence we have

$$\begin{aligned}\|T_{\wt\kappa(a)} \Delta a +T_{\Delta a}\wt\kappa(a)\|^\ell_{\dot B^{\frac d2}_{2,1}}
&\lesssim \|T_{\wt\kappa(a)} \Delta a +T_{\Delta a}\wt\kappa(a)\|_{\dot B^{\frac d2-1}_{2,1}}\\
&\lesssim  \|\wt\kappa(a)\|_{\dot B^{\frac dp-1}_{p,1}}\|\Delta a\|_{\dot B^{\frac dp}_{p,1}}
+ \|\Delta a\|_{\dot B^{\frac dp-1}_{p,1}}\|\wt\kappa(a)\|_{\dot B^{\frac dp}_{p,1}}\\
&\lesssim  \|a\|_{\dot B^{\frac dp-1}_{p,1}}\|\Delta a\|_{\dot B^{\frac dp}_{p,1}}.\end{aligned}$$
For the remainder term, one can use that $R:\dot B^{\frac dp}_{p,1}\times\dot B^{\frac dp}_{p,1}\to\dot B^{\frac d2}_{2,1}$
if $2\leq p\leq 4.$
Hence we eventually get, if $p\leq\min(4,\frac{2d}{d-2}),$
$$\|\nabla(\wt \kappa(a)\Delta a)\|^\ell_{L^1(\dot B^{\frac d2-1}_{2,1})}\lesssim  \|a\|_{L^\infty(\dot B^{\frac dp}_{p,1}\cap \dot B^{\frac dp-1}_{p,1})}\|\Delta a\|_{L^1(\dot B^{\frac dp-1}_{p,1})}.$$
In order to estimate the other capillary term, we simply use that $\tka'(a)\nabla a= \nabla(\tka(a))$, with $\tka(0)=0$. 
Now, thanks to  Bony's decomposition:
$$\nabla a \cdot \nabla(\tka(a)) = T_{\nabla a} \nabla(\tka(a))+ T_{\nabla(\tka(a))} \nabla a
+R(\nabla(\tka(a)),\nabla a).$$
and to
$$\begin{aligned}
 \| T_{\nabla a} \nabla(\tka(a))\!+\! T_{\nabla(\tka(a))} \nabla a
\|^\ell_{\dot B^{\frac d2}_{2,1}}&\lesssim\| \nabla a\|_{\dot B^{\frac dp-1}_{p,1}}\| \nabla(\tka(a))\|_{\dot B^{\frac dp}_{p,1}}
+ \| \nabla(\tka(a))\|_{\dot B^{\frac dp-1}_{p,1}}\| \nabla a\|_{\dot B^{\frac dp}_{p,1}}\\
&\lesssim\|a\|_{\dot B^{\frac dp}_{p,1}} \| \nabla a\|_{\dot B^{\frac dp}_{p,1}}
\end{aligned}
$$
and
$$
\| R(\nabla(\tka(a)),\nabla a)\|_{\dot B^{\frac d2}_{2,1}} \lesssim
 \| \nabla(\tka(a))\|_{\dot B^{\frac dp}_{p,1}}\| \nabla a\|_{\dot B^{\frac dp}_{p,1}}
\lesssim   \|a\|_{\dot B^{\frac dp+1}_{p,1}}\| \nabla a\|_{\dot B^{\frac dp}_{p,1}},$$
we end up with
$$
\|\nabla(\kappa'(a)|\nabla a|^2)\|^\ell_{L^1(\dot B^{\frac d2-1}_{2,1})}
\lesssim   \|a\|_{L^2(\dot B^{\frac dp}_{p,1}\cap \dot B^{\frac dp+1}_{p,1})}\|\nabla a\|_{L^2(\dot B^{\frac dp}_{p,1})}.$$

Combining with the already proved estimates for the other nonlinear terms (see \cite{D-handbook}), we conclude that \eqref{R-E69} is fulfilled.
From this,  it is not difficult to work out a fixed  point argument as in the previous section,
and to prove the first part of Theorem \ref{thm4.1}.
\end{proof}

\medbreak

\subsection{More paraproduct, remainder and  product estimates}

In order to investigate the Gevrey regularity of solutions in the $L^p$ framework, resorting only to Propositions \ref{prodlaws1}-\ref{prodlaws2} does not allow to get suitable bounds for the low frequency part of some nonlinear terms. The goal of this short subsection is to  establish more  estimates 
for the paraproduct, remainder operators in $L^2$ based Besov spaces, when 
the two functions under consideration belong to some $L^p$ type Besov space.
\begin{prop} \label{prodlaws3}
Assume that $2\leq p\leq\min (4, \frac{2d}{d-2})$ and $s\in \R$. There exists a constant $C>0$ such that
$$
\|\Gop T_{f}g\|_{\dot{B}^s_{2,1}} \leq C \|F\|_{\dot{B}^{\frac dp-1}_{p,1}} \|G\|_{\dot{B}^{s+1-\frac d2+\frac dp}_{p,1}}\with F\triangleq e^{\sqrt t\Lambda_1}f\andf G\triangleq e^{\sqrt t\Lambda_1}g.
$$
\end{prop}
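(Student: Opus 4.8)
The plan is to derive the bound from the Gevrey paraproduct estimate \eqref{R-E30} of Proposition \ref{prodlaws1}, which produces an $L^2$--type output as soon as the two factors are measured in $L^{p_1}$-- and $L^{p_2}$--Besov spaces with $\frac1{p_1}+\frac1{p_2}=\frac12$. Because $p\geq2$, this is affordable: the standard Besov embedding $\dot B^{\sigma}_{p,1}\hookrightarrow\dot B^{\sigma-d(\frac1p-\frac1q)}_{q,r}$, valid for $p\leq q$ and any $1\leq r\leq\infty$, upgrades the $L^p$ integrability of $F$ and $G$ to any $L^{p_i}$ with $p_i\geq p$. Concretely, I would choose $p_1,p_2$ with $\frac1{p_1}+\frac1{p_2}=\frac12$, $p\leq p_1,p_2$, $1<p_1,p_2<\infty$ and $\frac1{p_1}\leq\frac1d$, set $\sigma\triangleq 1-\frac d{p_1}\geq0$, and apply \eqref{R-E30} with Lebesgue triple $(2,p_1,p_2)$, summation triple $(r,r_1,r_2)=(1,1,\infty)$ --- so that the borderline case $\sigma=0$ is still allowed, precisely because $r_1=1$ --- and $G$--regularity index $s_0\triangleq s+1-\frac d2+\frac d{p_2}$.

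The remaining bookkeeping then closes by itself. By the embedding and the choice $\sigma=1-\frac d{p_1}$, we have $\|F\|_{\dot B^{-\sigma}_{p_1,1}}=\|F\|_{\dot B^{\frac d{p_1}-1}_{p_1,1}}\lesssim\|F\|_{\dot B^{\frac dp-1}_{p,1}}$ and $\|G\|_{\dot B^{s_0}_{p_2,\infty}}\lesssim\|G\|_{\dot B^{s_0+d(\frac1p-\frac1{p_2})}_{p,1}}=\|G\|_{\dot B^{s+1-\frac d2+\frac dp}_{p,1}}$; moreover the output regularity is $s_0-\sigma=\bigl(s+1-\frac d2+\frac d{p_2}\bigr)+\bigl(\frac d{p_1}-1\bigr)=s-\frac d2+d\bigl(\tfrac1{p_1}+\tfrac1{p_2}\bigr)=s$, which is exactly the claimed inequality. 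The one thing that must be checked is that an admissible pair $(p_1,p_2)$ exists: writing $x=\frac1{p_1}$, the constraints amount to $\max\bigl(0,\tfrac12-\tfrac1p\bigr)\leq x\leq\min\bigl(\tfrac1p,\tfrac1d\bigr)$ together with $0<x<\tfrac12$, and this range is non-empty precisely because $2\leq p\leq4$ (which forces $\tfrac12-\tfrac1p\leq\tfrac1p$) and $p\leq\tfrac{2d}{d-2}$ (which forces $\tfrac12-\tfrac1p\leq\tfrac1d$). At the endpoint $p=\tfrac{2d}{d-2}$ one is obliged to take $x=\tfrac1d$, hence $\sigma=0$, which is the reason the ``$\sigma\geq0$ if $r_1=1$'' clause of Proposition \ref{prodlaws1} is needed.

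I expect the only genuine, if minor, difficulty to be exactly this index bookkeeping near the endpoints $p=4$ and $p=\tfrac{2d}{d-2}$: keeping $\sigma\geq0$ and $p_2<\infty$ simultaneously. If one prefers not to invoke Proposition \ref{prodlaws1} at a non-standard third Lebesgue index, an equally short alternative is to rerun the hands-on computation from the proofs of Propositions \ref{prodlaws1}--\ref{prodlaws2}: write $\Gop T_{f}g=\sum_{j}B_t(\dot S_{j-1}F,\ddj G)$ and expand $B_t$ through \eqref{Btsum}; bound each resulting term in $L^2$ using that $K_\alpha$ is bounded on $L^2$, H\"{o}lder's inequality with $\frac1{p_1}=\frac12-\frac1p$, the $L^{p_1}$-- and $L^p$--boundedness of the operators $Z_{t,\alpha,\beta}$, and Bernstein's inequality \eqref{R-E14} on the low-frequency factor $\dot S_{j-1}F$. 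The Bernstein gain $d\bigl(\tfrac1p-\tfrac1{p_1}\bigr)=d\bigl(\tfrac2p-\tfrac12\bigr)$ is nonnegative exactly when $p\leq4$, and, after reorganising the sums as in \eqref{R-E34}, the series over $j$ carries the exponent $1-\tfrac d2+\tfrac dp$, whose nonnegativity is precisely $p\leq\tfrac{2d}{d-2}$; one then concludes with Proposition \ref{prop2.1}, since each $B_t(\dot S_{j-1}F,\ddj G)$ is spectrally supported in a dyadic annulus of size $2^{j}$.
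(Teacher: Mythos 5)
Your argument is correct and matches the paper's: the paper applies \eqref{R-E30} with $(p_1,p_2)=(p^*,p)$, $\frac1{p^*}=\frac12-\frac1p$, $\sigma=1-\frac d{p^*}$, $(r,r_1,r_2)=(1,1,\infty)$, and then concludes by the same two embeddings, with the conditions $p\leq4$ and $p\leq\frac{2d}{d-2}$ entering exactly where you place them. The only cosmetic difference is the endpoint $p=2$, where $p^*=\infty$ forces the paper to switch to \eqref{R-E30b}, whereas your flexible choice of $\frac1{p_1}$ in $\bigl[\max\bigl(0,\frac12-\frac1p\bigr),\min\bigl(\frac1p,\frac1d\bigr)\bigr]\cap\bigl(0,\frac12\bigr)$ absorbs that case into the same computation.
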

\begin{proof}
If $p>2$ then we define  $p^*$ by the relation $\frac 12=\frac 1p + \frac1{p^*}\cdotp$ 
Then applying inequality \eqref{R-E30} with the exponents 
$(s,\sigma, p,p_1,p_2,r,r_1,r_2)=(s+1-\frac{d}{p^*},1-\frac{d}{p^*}, 2,p^*, p,1,1,\infty)$
which is possible since  $p^* \geq p$ (that is $2\leq p\leq 4$)
 and $-\sigma\triangleq \frac{d}{p^*}-1 \leq 0$ (or, equivalently, $p\leq \frac{2d}{d-2}$), we get
$$
\|\Gop T_{f}g\|_{\dot{B}^s_{2,1}} \leq C \|F\|_{\dot{B}^{\frac{d}{p^*}-1}_{p^*,1}} \|G\|_{\dot{B}^{s+1-\frac{d}{p^*}}_{p,\infty}}.
$$
Then using the embedding $\dot B^{\frac dp}_{p,1}\hookrightarrow \dot B^{\frac d{p^*}}_{p^*,1}$
(note that $p^* \geq p$) and $\dot{B}^{s+1-\frac d2+\frac dp}_{p,1}\hookrightarrow  \dot B^{\frac d{p^*}}_{p^*,1}$
gives the desired inequality.
\smallbreak
The endpoint case $p=2$ stems from  \eqref{R-E30b}  with the exponents 
$(s,\sigma,p,q,r,r_1,r_2)=(s+1,1,2,2,1,1,\infty)$.
\end{proof}

As a consequence of Proposition \ref{prodlaws1} and of the embedding $\dot{B}^{\sigma+d(\frac2p -\frac 12)}_{p/2,1}\hookrightarrow \dot{B}^{\sigma}_{2,1}$ for any $2\leq p \leq4$ and $\sigma\in\R,$ we readily get:
\begin{prop}\label{prodlaws4}
Let $d\geq2$ and $2\leq p \leq4$. If $s_1+s_2>d(\frac 12-\frac 2p)$ then there exists a constant $C>0$ such that
\begin{equation}\label{R-E82}
\|\Gop R(f,g)\|_{\dot{B}^{s_1+s_2}_{2,1}} \leq C \|F\|_{\dot{B}^{s_1+d\bigl(\frac 2p-\frac 12\bigr)}_{p,1}} \|G\|_{\dot{B}^{s_2}_{p,1}}.\end{equation}
\end{prop}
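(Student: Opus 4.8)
The plan is to derive Proposition \ref{prodlaws4} directly from the remainder estimate \eqref{R-E32} of Proposition \ref{prodlaws1}, combined with a single Besov embedding in the target space. First I would apply \eqref{R-E32} with the choice of Lebesgue exponents $p_1=p_2=p$ and $r=r_1=r_2=1$, which is legitimate because $1<p<\infty$ (we have $2\leq p\leq 4$) and because the hypothesis $s_1+s_2>d(\tfrac12-\tfrac2p)$ will, after the embedding step, translate into the positivity condition $s_1'+s_2>0$ required there; this yields
$$\|\Gop R(f,g)\|_{\dot B^{s_1'+s_2}_{p/2,1}}\leq C\|F\|_{\dot B^{s_1'}_{p,1}}\|G\|_{\dot B^{s_2}_{p,1}}$$
for any admissible $s_1'$ (note $p/2\geq1$ so $\dot B^{\cdot}_{p/2,1}$ makes sense, and $R$ genuinely maps into an $L^{p/2}$-based space since both factors sit in $L^p$).

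Next I would invoke the embedding $\dot B^{\sigma+d(\frac2p-\frac12)}_{p/2,1}\hookrightarrow\dot B^{\sigma}_{2,1}$, valid for $2\leq p\leq4$ (so that $p/2\leq2$ and the regularity shift $d(\frac2p-\frac12)\geq0$ is nonnegative, as required for a Besov embedding when lowering the integrability exponent from $p/2$ to $2$) and for all $\sigma\in\R$. Applying this with $\sigma=s_1+s_2$ gives
$$\|\Gop R(f,g)\|_{\dot B^{s_1+s_2}_{2,1}}\lesssim\|\Gop R(f,g)\|_{\dot B^{s_1+s_2+d(\frac2p-\frac12)}_{p/2,1}}.$$
Then I would choose $s_1'\triangleq s_1+d(\frac2p-\frac12)$ in the first step, so that $s_1'+s_2=s_1+s_2+d(\frac2p-\frac12)$, matching the exponent produced by the embedding. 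The positivity requirement $s_1'+s_2>0$ from \eqref{R-E32} is then exactly $s_1+s_2>-d(\frac2p-\frac12)=d(\frac12-\frac2p)$, which is precisely the hypothesis of the proposition. Chaining the two displayed inequalities yields
$$\|\Gop R(f,g)\|_{\dot B^{s_1+s_2}_{2,1}}\leq C\|F\|_{\dot B^{s_1+d(\frac2p-\frac12)}_{p,1}}\|G\|_{\dot B^{s_2}_{p,1}},$$
which is \eqref{R-E82}.

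There is essentially no serious obstacle here: the proof is a bookkeeping exercise in matching indices, and the only points requiring a moment of care are verifying that the embedding $\dot B^{\sigma+d(\frac2p-\frac12)}_{p/2,1}\hookrightarrow\dot B^{\sigma}_{2,1}$ is applicable (which needs $p/2\le 2$, i.e.\ $p\le4$, and the nonnegativity of the shift, again $p\le4$), and checking that the positivity condition on the sum of regularity indices in \eqref{R-E32} is equivalent to the stated hypothesis after the shift. The case $p=2$ is degenerate ($p/2=1$, zero shift) but still covered since \eqref{R-E32} allows $p_1=p_2=2$ and the embedding becomes the identity. Since the statement is phrased as an immediate consequence of Proposition \ref{prodlaws1}, I would keep the write-up to just these few lines.
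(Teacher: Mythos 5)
Your argument is precisely the one the paper intends: Proposition \ref{prodlaws4} is presented there as an immediate consequence of \eqref{R-E32} applied with $p_1=p_2=p$ (so that the remainder lands in an $L^{p/2}$-based Besov space) followed by the embedding $\dot B^{\sigma+d(\frac2p-\frac12)}_{p/2,1}\hookrightarrow\dot B^{\sigma}_{2,1}$, and your bookkeeping of the regularity indices is correct. One cosmetic remark: the choice $r=r_1=r_2=1$ violates the constraint $1/r=1/r_1+1/r_2$ in Proposition \ref{prodlaws1}; take $r_1=1$, $r_2=\infty$ together with $\dot B^{s_2}_{p,1}\hookrightarrow\dot B^{s_2}_{p,\infty}$ to land in $\ell^1$.

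The substantive problem is your handling of the endpoint $p=2$: you have swapped the two endpoints. The degenerate case where $p/2=2$ and the shift $d(\frac2p-\frac12)$ vanishes (so the embedding is the identity) is $p=4$, not $p=2$, and that case is unproblematic. At $p=2$ the shift equals $d/2$ and, crucially, the target Lebesgue exponent produced by $p_1=p_2=2$ is $p/2=1$, which is excluded from Proposition \ref{prodlaws1}: its proof rests on Lemma \ref{lem3.4}, i.e.\ on the $L^p$-boundedness of the operators $K_\alpha$ and $Z_{t,\alpha,\beta}$, which fails at the Lebesgue endpoints. So \eqref{R-E32} with $p_1=p_2=2$ is simply not available, and the sentence ``still covered since \eqref{R-E32} allows $p_1=p_2=2$'' is false. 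The gap is repairable by the same device the paper uses in the proof of Proposition \ref{prop3.4} for $1<p<2$: apply \eqref{R-E32} with a target exponent $p_0>1$ close to $1$ and $1/p_0=1/2+1/p_2$ (so $p_2>2$ close to $2$), estimate $F$ in $\dot B^{s_1+d/p_2}_{p_2,1}\supset\dot B^{s_1+d/2}_{2,1}$, and embed the output via $\dot B^{\sigma}_{p_0,1}\hookrightarrow\dot B^{\sigma-d/p_2}_{2,1}$; the positivity condition then reads $s_1+s_2>-d/p_2$, so choosing $p_2$ close enough to $2$ recovers the full range $s_1+s_2>-d/2$. This extra step (or an appeal to \eqref{R-E32b}) is needed for the statement as claimed at $p=2$, which is actually used later for $d=2$.
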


\begin{prop}  \label{proplaws5}
 \sl{Assume that $2\leq p\leq\min (4, \frac{2d}{d-2})$ and $p<2d.$  There exists a constant $C>0$ such that:
 \begin{equation}
  \begin{cases}
   \|\Goptau (fg)\|_{\dot{B}_{2,1}^{\frac d2}}^\ell \lesssim \|F\|_{\dot{B}_{p,1}^{\frac dp-1}} \|G\|_{\dot{B}_{p,1}^{\frac dp+1}} +\|F\|_{\dot{B}_{p,1}^{\frac dp+1}} \|G\|_{\dot{B}_{p,1}^{\frac dp-1}},\\
   \|\Goptau (fg)\|_{\dot{B}_{2,1}^{\frac d2-1}}^\ell \lesssim \|F\|_{\dot{B}_{p,1}^{\frac dp-1}} \|G\|_{\dot{B}_{p,1}^{\frac dp}} +\|F\|_{\dot{B}_{p,1}^{\frac dp}} \|G\|_{\dot{B}_{p,1}^{\frac dp-1}},\\
   \|\Goptau (fg)\|_{\dot{B}_{2,1}^{\frac d2-1}}^\ell \lesssim \|F\|_{\dot{B}_{p,1}^{\frac dp-1} \cap \dot{B}_{p,1}^{\frac dp}} \|G\|_{\dot{B}_{p,1}^{\frac dp-1}}.
  \end{cases}
  \label{prodlaws5}
  \end{equation}}\end{prop}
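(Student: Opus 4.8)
The plan is to prove the three inequalities of \eqref{prodlaws5} by combining Bony's decomposition \eqref{R-E11} for the Gevrey-modified product with the $L^p$-to-$L^2$ paraproduct and remainder estimates of Propositions \ref{prodlaws3} and \ref{prodlaws4}, exactly as in the proof of Proposition \ref{prop3.4} but keeping track of the fact that now the output space is $L^2$-based while the factors live in $L^p$-based spaces. Recall first that for low frequencies the operator $\Goptau$ and the cut-off $^\ell$ commute with everything, so $\|\Goptau(fg)\|^\ell_{\dot B^s_{2,1}}\lesssim\|\Goptau(fg)\|_{\dot B^s_{2,1}}$ and it suffices to bound the full Besov norm; moreover writing $\Goptau(fg)=\Goptau T_fg+\Goptau T_gf+\Goptau R(f,g)$ reduces matters to the three pieces.

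For the first estimate, with $(s_1,s_2)=(\frac dp-1,\frac dp+1)$ and its symmetric counterpart, I would bound the two paraproduct terms by Proposition \ref{prodlaws3}: taking $s=\frac d2$ there requires the second factor to sit in $\dot B^{\frac d2+1-\frac d2+\frac dp}_{p,1}=\dot B^{\frac dp+1}_{p,1}$ and the first factor in $\dot B^{\frac dp-1}_{p,1}$, which is precisely one of the two products on the right-hand side; the term $\Goptau T_gf$ gives the symmetric product. For the remainder $\Goptau R(f,g)$ I would apply Proposition \ref{prodlaws4} with $s_1+s_2=\frac d2$: this needs $\frac d2>d(\frac12-\frac2p)$, i.e. $p<\infty$ trivially (in fact it holds as soon as $p\geq2$), and it produces $\|F\|_{\dot B^{s_1+d(2/p-1/2)}_{p,1}}\|G\|_{\dot B^{s_2}_{p,1}}$; choosing $s_1,s_2$ so that the exponents become $\frac dp-1$ and $\frac dp+1$ (up to the usual harmless splitting and embeddings, using $p<2d$ and $p\leq\min(4,\frac{2d}{d-2})$) closes it. The second and third inequalities are handled the same way with the target regularity $\frac d2-1$ instead of $\frac d2$: for the paraproducts one invokes Proposition \ref{prodlaws3} with $s=\frac d2-1$ (so the "good" factor needs regularity $\frac dp$ and the "bad" one $\frac dp-1$), and for the remainder one again uses Proposition \ref{prodlaws4}, now with $s_1+s_2=\frac d2-1$, which forces $\frac d2-1>d(\frac12-\frac2p)$, i.e. $p<4d/(d+2)\cdot\frac{?}{}$—more cleanly, since $2\leq p\leq\min(4,\frac{2d}{d-2})$ this condition reads $\frac dp-1+\frac dp>d(\frac12-\frac2p)$ and is verified; the third inequality is obtained from the second by the embedding $\dot B^{\frac dp}_{p,1}\cap\dot B^{\frac dp-1}_{p,1}\hookrightarrow$ itself together with moving one unit of regularity from $G$ back to $F$ in the remainder estimate, exploiting that $R$ is symmetric and that we are free to use either $\dot B^{\frac dp-1}_{p,1}$ or $\dot B^{\frac dp}_{p,1}$ for the $F$-factor.

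The main obstacle I anticipate is bookkeeping rather than conceptual: Propositions \ref{prodlaws3} and \ref{prodlaws4} come with rigid constraints linking the regularity indices, and one must verify at each occurrence that the endpoint conditions $s_1+s_2>d(\frac12-\frac2p)$ (for the remainder) and $2\leq p\leq\min(4,\frac{2d}{d-2})$, $p<2d$ (for the paraproduct, via the auxiliary exponent $p^*$) are genuinely met for the specific indices $\frac dp$, $\frac dp\pm1$, $\frac d2$, $\frac d2-1$ appearing here — in particular the third estimate is the delicate one, since one needs to distribute the derivatives between $F$ and $G$ so that neither factor exits the allowed range, and the hypothesis $p<2d$ (equivalently $\frac dp>\frac12$, guaranteeing $\frac dp-1>-\frac d2$ and keeping all remainder exponents admissible) is exactly what is used there. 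Once the admissibility of indices is checked, the estimates follow by summing the Bony pieces and invoking the standard embedding $\dot B^{\sigma+d(2/p-1/2)}_{p/2,1}\hookrightarrow\dot B^\sigma_{2,1}$ and $\dot B^{\frac dp}_{p,1}\hookrightarrow\dot B^{\frac d{p^*}}_{p^*,1}$ wherever a gap between exponents must be absorbed, just as in Proposition \ref{prop3.4}.
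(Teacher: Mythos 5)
Your treatment of the first two inequalities is correct and follows the paper's route exactly: Bony's decomposition, Proposition \ref{prodlaws3} for the two paraproducts with $s=\frac d2$ (resp.\ $s=\frac d2-1$), and Proposition \ref{prodlaws4} for the remainder with $s_1+s_2=\frac d2$ (resp.\ $\frac d2-1$), the latter being where $p<2d$ enters since the condition $\frac d2-1>d(\frac12-\frac2p)$ is equivalent to $\frac{2d}{p}>1$.

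There is, however, a genuine gap in your argument for the third inequality, and it originates in your very first reduction: you discard the low-frequency cut-off by writing $\|\Goptau(fg)\|^\ell_{\dot B^s_{2,1}}\lesssim\|\Goptau(fg)\|_{\dot B^s_{2,1}}$ and claim it suffices to bound the full norm. That is harmless for the first two estimates but fatal for the third, because the full-norm version of the third inequality is false: Proposition \ref{prodlaws3} with target $s=\frac d2-1$ \emph{forces} $G\in\dot B^{s+1-\frac d2+\frac dp}_{p,1}=\dot B^{\frac dp}_{p,1}$ for the term $T_fg$, and no choice of indices lets you trade this for $\dot B^{\frac dp-1}_{p,1}$ (a direct H\"older computation on $\dot S_{j-1}f\,\ddj g$ at large $j$ shows the high-frequency part genuinely requires the extra derivative on $g$). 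Your proposed fix --- deriving the third estimate from the second by ``moving one unit of regularity from $G$ back to $F$ in the remainder, exploiting that $R$ is symmetric'' --- only addresses the remainder, where the split $s_1+s_2=\frac d2-1$ is indeed flexible; it does nothing for the paraproduct $T_fg$, whose index constraints in Proposition \ref{prodlaws3} are rigid. The paper's proof keeps the superscript $\ell$ precisely at this point and uses it to \emph{lower} the target regularity of the paraproduct pieces, writing
$$\|\Goptau(T_fg+T_gf)\|^\ell_{\dot B^{\frac d2-1}_{2,1}}\lesssim \|\Goptau(T_fg+T_gf)\|^\ell_{\dot B^{\frac d2-2}_{2,1}},$$
which is legitimate only on low frequencies; applying Proposition \ref{prodlaws3} with $s=\frac d2-2$ then demands only $G\in\dot B^{\frac dp-1}_{p,1}$ (and $F\in\dot B^{\frac dp-1}_{p,1}$), while the remainder is bounded as you describe with $(s_1,s_2)=(\frac d2-\frac dp,\frac dp-1)$, yielding the $\dot B^{\frac dp}_{p,1}$ norm of $F$. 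This is the missing idea; without it the third estimate does not close. (A minor further point: your verification of the remainder condition for the second estimate contains an unfinished computation --- the clean statement is simply $\frac d2-1>d(\frac12-\frac2p)\iff p<2d$.)
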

\begin{proof}
{}From Bony's decomposition, we have 
$$
\|\Goptau (fg)\|^{\ell}_{\dot{B}^{\frac d2}_{2,1}} = \|\Goptau (T_f g +T_g f +R(f,g))\|^{\ell}_{\dot{B}^{\frac d2}_{2,1}}.
$$
Thanks to Propositions \ref{prodlaws3} and \ref{prodlaws4} (with $(s,s_1,s_2)=(\frac d2, \frac d2 -\frac dp -1, \frac dp+1)$), we get that:
$$
\begin{cases}
 \|\Goptau (T_{f}g)\|_{\dot{B}^{\frac d2}_{2,1}}^{\ell} + \|\Goptau (R(f,g))\|_{\dot{B}^{\frac d2}_{2,1}}^{\ell} \lesssim \|{F}\|_{\dot{B}^{\frac dp-1}_{p,1}} \|{G}\|_{\dot{B}^{\frac dp+1}_{p,1}},\\
\|\Goptau (T_{g}f)\|_{\dot{B}^{\frac d2}_{2,1}}^{\ell}\lesssim \|{G}\|_{\dot{B}^{\frac dp-1}_{p,1}} \|{F}\|_{\dot{B}^{\frac dp+1}_{p,1}}.\\
\end{cases}
$$
The second estimate is proved the same way but with  $(s,s_1,s_2)=(\frac d2-1, \frac d2 -\frac dp -1, \frac dp)$. For the last estimate, we write that, taking advantage of the low frequency cut-off,
$$
\|\Goptau (fg)\|^{\ell}_{\dot{B}^{\frac d2-1}_{2,1}} \lesssim \|\Goptau (T_f g +T_g f)\|^{\ell}_{\dot{B}^{\frac d2-2}_{2,1}} +\|\Goptau R(f,g))\|^{\ell}_{\dot{B}^{\frac d2-1}_{2,1}}.
$$
The last term may be bounded as before, and for the first two terms, we apply 
Proposition \ref{prodlaws3} with  $s=\frac d2-2$.
\end{proof}


\subsection{A priori estimates for Gevrey regularity}

That paragraph is devoted to proving estimates  for Gevrey regularity in the $L^p$ Besov framework.
This will be based on the following lemma.
\begin{lem} \label{lem4.3}
If $(a,u)$ satisfies \eqref{R-E3}, then the following a priori estimate holds true:
\begin{eqnarray}\label{R-E85}
\|(a,u)\|_{Y_{p}}\leq C\bigl(\cX_{p,0}+\|(a,u)\|_{Y_{p}}^2+\|(a,u)\|_{Y_{p}}^3\bigr)\cdotp
\end{eqnarray}
\end{lem}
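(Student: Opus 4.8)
The plan is to mimic the structure of the proof of Lemma \ref{lem3.6}, but now in the frequency-splitted $L^p$ framework, combining the parabolic smoothing estimates for the linearized system with the new Gevrey product estimates of Propositions \ref{prodlaws3}--\ref{proplaws5} and the composition lemmas \ref{l:compo}--\ref{l:compo2}. More precisely, I would first apply $\ddj$ and the Gevrey multiplier $\Gop$ to System \eqref{R-E3}, exactly as in \eqref{estimloc}: for the low frequencies I would use Lemma \ref{lem3.1} together with Lemmas \ref{lem3.2} and \ref{lem3.3}, while for the high frequencies of $\cP u$ I would use Lemma \ref{lem4.1} (applied as in \eqref{R-E71}--\eqref{R-E72}) preceded by the same splitting of the Gevrey factor $e^{\sqrt{c_0t}\Lambda_1}=e^{(\sqrt{c_0(t-\tau)}+\sqrt{c_0\tau}-\sqrt{c_0t})\Lambda_1}\,e^{\sqrt{c_0(t-\tau)}\Lambda_1}$ that was used in Lemma \ref{lem3.6}. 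The outcome should be a linear estimate of the form
$$
\|(a,u)\|_{Y_p}\lesssim \cX_{p,0}+\|\Gop(f,g)\|^{\ell}_{L^1_T(\dot B^{\frac d2-1}_{2,1})}+\|\Gop f\|^{h}_{L^1_T(\dot B^{\frac dp}_{p,1})}+\|\Gop g\|^{h}_{L^1_T(\dot B^{\frac dp-1}_{p,1})},
$$
which is the Gevrey analogue of Lemma \ref{lem4.2} and relies on the effective-velocity manipulation ($v=\cQ u+(-\Delta)^{-1}\nabla a$, $w=v+\alpha\nabla a$) carried out there, all of whose Fourier multipliers commute with $\Gop$.

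Next I would bound each nonlinear term. For the \emph{high-frequency} parts, I would use that $\{f\in\dot B^{\frac dp}_{p,1}\,:\,e^{\sqrt t\Lambda_1}f\in\dot B^{\frac dp}_{p,1}\}$ is an algebra (Remark \ref{r:algebra}) plus the composition estimate Lemma \ref{l:compo} with $s=\frac dp$ or $s=\frac dp-1$, exactly as in \eqref{R-E78} and in the displays immediately following it: $\Gop f=-\Goptau(au)$ gives a term $\lesssim\cX_p^2$, and similarly the capillary terms $\nabla(\tka(a)\Delta a)$ and $\nabla(\tka'(a)|\nabla a|^2)$ are handled by writing $\|\Gop\nabla(\tka(a)\Delta a)\|^h_{L^1(\dot B^{\frac dp-1}_{p,1})}\lesssim\|\Goptau(\tka(a)\Delta a)\|_{L^1(\dot B^{\frac dp}_{p,1})}$ and applying Proposition \ref{prop3.4} together with Lemma \ref{l:compo}; the remaining terms $g_1,\dots,g_4$ are treated as in the classical compressible case of \cite{H2}, \cite{D-handbook}, now with the Gevrey product laws. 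For the \emph{low-frequency} parts, the point is that Propositions \ref{prodlaws1}--\ref{prop3.4} alone are not enough: one must feed $\dot B^{\frac dp}_{p,1}$-type Gevrey norms of $F,G$ into an $L^2$-based target space, which is precisely what Proposition \ref{proplaws5} provides. So for each nonlinear term I would run Bony's decomposition as in the proof of Lemma \ref{lem4.2} (the passage just after \eqref{R-E78}, where $\wt\kappa(a)\Delta a=T_{\wt\kappa(a)}\Delta a+T_{\Delta a}\wt\kappa(a)+R(\wt\kappa(a),\Delta a)$) and replace each paraproduct/remainder bound by its Gevrey counterpart from Propositions \ref{prodlaws3}, \ref{prodlaws4} and \ref{proplaws5}; the composition factors $\wt\kappa(a)$, $\tka(a)$, $I(a)$, $J(a)$ etc.\ are then controlled by Lemma \ref{l:compo} with $s\in\{\frac d2-1,\frac d2,\frac dp-1,\frac dp\}$ under the smallness condition \eqref{R-E55}, which is why the constant $c_0$ and the smallness of $\eta$ enter. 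Summing the contributions and absorbing the gradient losses at high frequency by $2^{-2k_0}$ factors (choosing $k_0$ large, as in Lemma \ref{lem4.2}) yields \eqref{R-E85}.

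The main obstacle, as in the non-Gevrey case, is the low-frequency estimate of the two capillary terms $\nabla(\wt\kappa(a)\Delta a)$ and $\nabla(\tka'(a)|\nabla a|^2)$: these involve two derivatives of $a$, so one has to land in $\dot B^{\frac d2-1}_{2,1}$ starting from the $L^p$-type high-regularity norms $\|A\|^h_{L^1(\dot B^{\frac dp+2}_{p,1})}$, $\|A\|^h_{L^2(\dot B^{\frac dp+1}_{p,1})}$, and the Gevrey factor must be distributed across the two (or three) pieces of the product without losing the gain. This is exactly where the restriction $2\le p\le\min(4,2d/(d-2))$ (and $p<2d$) is used, through the mapping properties $T:\dot B^{\frac dp-1}_{p,1}\times\dot B^{\frac dp}_{p,1}\to\dot B^{\frac d2-1}_{2,1}$ and $R:\dot B^{\frac dp}_{p,1}\times\dot B^{\frac dp}_{p,1}\to\dot B^{\frac d2}_{2,1}$ encoded in Propositions \ref{prodlaws3}--\ref{proplaws5}, and where the trick $\tka'(a)\nabla a=\nabla(\tka(a))$ (with $\tka(0)=0$) is needed so that Lemma \ref{l:compo} can be applied with a regularity index $\le\frac dp$. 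Once those bounds are in place, the rest is bookkeeping; the estimate \eqref{R-E85}, together with a contraction argument of the same flavour (using Lemma \ref{l:compo2} for the differences), then closes the proof of the Gevrey part of Theorem \ref{thm4.1}.
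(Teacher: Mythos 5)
Your proposal is correct and follows essentially the same route as the paper's proof: low frequencies via the $L^2$-based Lemma \ref{lem3.1} combined with Lemmas \ref{lem3.2}--\ref{lem3.3} (i.e.\ summing \eqref{estimloc} over $j\le k_0$), high frequencies via the effective velocities $v$ and $w$ and the complex-coefficient maximal regularity of Lemma \ref{lem4.1}, and then the nonlinear terms bounded by Proposition \ref{proplaws5} (low frequencies) and Proposition \ref{prop3.6} with Lemma \ref{l:compo} (high frequencies), including the key trick $\tka'(a)\nabla a=\nabla(\tka(a))$ for the second capillary term. No gaps.
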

\begin{proof}
Summing up inequality \eqref{estimloc} for $j\leq k_0,$ we get for all $t\geq0,$
\begin{multline}\label{R-E86}
\|({A},{U})\|^{\ell}_{\tilde{L}^{\infty}_{t}(\dot{B}^{\frac{d}{2}-1}_{2,1})}
+\|({A},{U})\|^{\ell}_{{L}^{1}_{t}(\dot{B}^{\frac{d}{2}+1}_{2,1})}\\ \lesssim \|( a_0,u_0)\|^{\ell}_{\dot{B}^{\frac{d}{2}-1}_{2,1}}+\|F\|^{\ell}_{L^{1}_{t}(\dot{B}^{\frac{d}{2}-1}_{2,1})}
+\|G\|^{\ell}_{L^{1}_{t}(\dot{B}^{\frac{d}{2}-1}_{2,1})}.
\end{multline}

Regarding the high frequency estimates, we plan to repeat the computations of the previous section 
after introducing  $\Gop$ everywhere. 
Now,  using again the auxiliary functions  
$$v\triangleq \cQ u+(-\Delta)^{-1}\nabla a\andf w\triangleq v+\alpha\nabla a\with\alpha=\frac12\bigl(1+\sqrt{1-4\kab}\bigr),$$ and setting $\wt\alpha\triangleq1-\alpha$ and $\tilde{g}\triangleq\cQ g+(-\Delta)^{-1}\nabla f+v-(-\Delta)^{-1}\nabla a,$ we discover that 
$$w(t)=e^{\wt \alpha t\Delta}w_0+\int^t_{0}e^{\wt\alpha(t-\tau)\Delta}(-\alpha\nabla a+\alpha\nabla f+\tilde{g})(\tau)\,d\tau.$$
Hence ${W(t)}\triangleq \Gop w(t)$ fulfills (with obvious notation):
 $$
W(t)=e^{\sqrt{t}\Lambda_1+\wt\alpha t\Delta}w_0+\int^t_{0}e^{[(\sqrt{t}-\sqrt{\tau})\Lambda_1+\wt\alpha(t-\tau)\Delta]} (-\alpha\nabla A+\alpha\nabla F+\tilde{G})(\tau)\,d\tau. $$
It follows from Lemmas \ref{lem3.2}-\ref{lem3.3} that for the same threshold $k_0$ as in \eqref{R-E73} and \eqref{R-E74}, we have
$$\begin{aligned}
\|{W}\|^{h}_{\tilde{L}^{\infty}_{T}(\dot{B}^{\frac{d}{p}-1}_{p,1})}\!+\!\|{W}\|^{h}_{\tilde{L}^{1}_{T}(\dot{B}^{\frac{d}{p}+1}_{p,1})}&\lesssim \|w_0\|^{h}_{\dot{B}^{\frac dp-1}_{p,1}}+\|{A}\|^{h}_{L^{1}_{T}(\dot{B}^{\frac{d}{p}}_{p,1})}
\!+\!\|F\|^{h}_{L^{1}_{T}(\dot{B}^{\frac{d}{p}}_{p,1})}
\!+\!\|\wt G\|^{h}_{L^{1}_{T}(\dot{B}^{\frac{d}{p}-1}_{p,1})}\\
&\lesssim \|w_0\|^{h}_{\dot{B}^{\frac dp-1}_{p,1}}+2^{-2k_{0}}\|{A}\|^{h}_{L^{1}_{T}(\dot{B}^{\frac{d}{p}+2}_{p,1})}+2^{-2k_{0}}\|{V}\|^{h}_{L^{1}_{T}(\dot{B}^{\frac{d}{p}+1}_{p,1})}\\
&\hspace{4cm}+\|F\|^{h}_{L^{1}_{T}(\dot{B}^{\frac{d}{p}}_{p,1})}
+\| G\|^{h}_{L^{1}_{T}(\dot{B}^{\frac{d}{p}-1}_{p,1})}.
\end{aligned}$$
Then one can revert to $v$ as in \eqref{eq:Da}, applying $\Gop$ to:
$$\partial_{t}v-\frac{\kab}{1-\alpha} \Delta v=\frac{\kab}{\alpha} \Delta w+\tilde{g}.$$
Denoting   ${V}\triangleq \Gop v$ and   following  the procedure leading to \eqref{R-E75}, one gets
$$\begin{aligned}
\|{V}\|^{h}_{\tilde{L}^{\infty}_{T}(\dot{B}^{\frac{d}{p}-1}_{p,1})}+\|{V}\|^{h}_{\tilde{L}^{1}_{T}(\dot{B}^{\frac{d}{p}+1}_{p,1})}&\lesssim \|W\|^h_{L^{1}_{T}(\dot{B}^{\frac{d}{p}+1}_{p,1})}
+\|\wt G\|^h_{L^{1}_{T}(\dot{B}^{\frac{d}{p}-1}_{p,1})}\\
&\lesssim \|v_0\|^{h}_{\dot{B}^{\frac dp-1}_{p,1}}+\|{W}\|^{h}_{\tilde{L}^{1}_{T}(\dot{B}^{\frac{d}{p}+1}_{p,1})}
+2^{-2k_{0}}\|{V}\|^{h}_{L^{1}_{T}(\dot{B}^{\frac{d}{p}+1}_{p,1})}
\\&\ +2^{-4k_{0}}\|{A}\|^{h}_{L^{1}_{T}(\dot{B}^{\frac{d}{p}+2}_{p,1})}+\|F\|^{h}_{L^{1}_{T}(\dot{B}^{\frac{d}{p}-2}_{p,1})}+\|G\|^{h}_{L^{1}_{T}(\dot{B}^{\frac{d}{p}-1}_{p,1})}.
\end{aligned}
$$
For the incompressible part of velocity, applying $\Gop$ to \eqref{eq:incompr} yields
\begin{equation}\label{R-E96}
\|\mathcal{P}{U}\|^{h}_{\tilde{L}^{\infty}_{T}(\dot{B}^{\frac{d}{p}-1}_{p,1})}+\|\mathcal{P}{U}\|^{h}_{\tilde{L}^{1}_{T}(\dot{B}^{\frac{d}{p}+1}_{p,1})}
\lesssim \|\mathcal{P}u_0\|_{\dot{B}^{\frac{d}{p}-1}_{p,1}}+\|G\|^{h}_{L^{1}_{T}(\dot{B}^{\frac{d}{p}-1}_{p,1})}.
\end{equation}
Therefore, taking  the same large enough $k_0$ as in the previous section, 
and using \eqref{eq:Da}, we  deduce that
\begin{multline}\label{R-E97}
\|(\nabla {A},{U})\|^{h}_{\tilde{L}^{\infty}_{t}(\dot{B}^{\frac{d}{p}-1}_{p,1})}+\|(\nabla {A},{U})\|^{h}_{\tilde{L}^{1}_{t}(\dot{B}^{\frac{d}{p}+1}_{p,1})} \\ \lesssim \|(\nabla a_0,u_0)\|^{h}_{\dot{B}^{\frac{d}{p}-1}_{p,1}}+\|F\|^{h}_{L^{1}_{t}(\dot{B}^{\frac{d}{p}}_{p,1})}
+\|G\|^{h}_{L^{1}_{t}(\dot{B}^{\frac{d}{p}-1}_{p,1})}.
\end{multline}
Putting together with  \eqref{R-E86},  we end up with 
\begin{multline}\label{R-E98}
\|({A},{U})\|_{X_p(t)}\lesssim X_{p,0}+\|F\|^{\ell}_{L^{1}_{t}(\dot{B}^{\frac{d}{2}}_{2,1})}
+\|G\|^{\ell}_{L^{1}_{t}(\dot{B}^{\frac{d}{2}-1}_{2,1})}\\+\|F\|^{h}_{L^{1}_{t}(\dot{B}^{\frac{d}{p}}_{p,1})}
+\|G\|^{h}_{L^{1}_{t}(\dot{B}^{\frac{d}{p}-1}_{p,1})}.
\end{multline}
All that remains to do is to bound $F$ and $G,$ which will be strongly based on Proposition \ref{proplaws5}
as regards the low frequencies. 
\smallbreak
Let us start with $F.$ Then, thanks to \eqref{prodlaws5}$_1$ and  Besov injections (as $p\geq 2$), we get 
$$\begin{aligned}
\|F\|^{\ell}_{L^1_{t}(\dot{B}^{\frac d2-1}_{2,1})} &\lesssim \|{A}\|_{L^\infty_{t}(\dot{B}^{\frac dp-1}_{p,1})}
\|{U}\|_{L^1_{t}(\dot{B}^{\frac dp+1}_{p,1})} +\|{U}\|_{L^\infty_{t}(\dot{B}^{\frac dp-1}_{p,1})}
\|{A}\|_{L^1_{t}(\dot{B}^{\frac dp+1}_{p,1})}\\
&\lesssim  \Big(\|{A}\|^{\ell}_{L^\infty_{t}(\dot{B}^{\frac d2-1}_{2,1})}+ \|{A}\|^{h}_{L^\infty_{t}(\dot{B}^{\frac dp}_{p,1})}\Big) \Big(\|{U}\|^{\ell}_{L^1_{t}(\dot{B}^{\frac d2+1}_{2,1})} +\|{U}\|^{h}_{L^1_{t}(\dot{B}^{\frac dp+1}_{p,1})}\Big)\\
&\qquad\qquad+\Big(\|{A}\|^{\ell}_{L^1_{t}(\dot{B}^{\frac d2+1}_{2,1})}\!+\!\|{A}\|^h_{L^1_{t}(\dot{B}^{\frac dp+2}_{p,1})}\Big) \Big(\|{U}\|^{\ell}_{L^\infty_{t}(\dot{B}^{\frac d2-1}_{2,1})} \!+\!\|{U}\|^h_{L^\infty_{t}(\dot{B}^{\frac dp-1}_{p,1})}\Big) 
\\&\lesssim \|(a,u)\|_{Y_p}^2.
\end{aligned}$$
Next, we bound the norm $\|G\|^{\ell}_{L^1_{t}(\dot{B}^{\frac d2-1}_{2,1})}$. Using \eqref{prodlaws5}$_2$ we obtain
$$\begin{aligned}
 \|G_1\|^{\ell}_{L^1_{t}(\dot{B}^{\frac d2-1}_{2,1})} &=\|\Goptau (u\cdot \nabla u)\|^{\ell}_{L^1_{t}(\dot{B}^{\frac d2-1}_{2,1})}\\
 &\lesssim \|{U}\|_{L^\infty_{t}(\dot{B}^{\frac dp-1}_{p,1})}\|{U}\|_{L^1_{t}(\dot{B}^{\frac dp+1}_{p,1})} +\|{U}\|_{L^2_{t}(\dot{B}^{\frac dp}_{p,1})}^2 \lesssim \|(a,u)\|_{Y_p}^2.
\end{aligned}$$
Let us now turn to $G_3 =-\Goptau(I(a)\cAb u).$ Thanks to \eqref{prodlaws5}$_3$ and Proposition \ref{l:compo}:
$$\begin{aligned}
 \|G_3\|^{\ell}_{L^1_{t}(\dot{B}^{\frac d2-1}_{2,1})} &\lesssim \int_0^t \|\Goptau I(a)\|_{\dot{B}_{p,1}^{\frac dp-1} \cap \dot{B}_{p,1}^{\frac dp}} \|U\|_{\dot{B}_{p,1}^{\frac dp+1}} d\tau \\
&\lesssim \|{A}\|_{L^\infty_{t}(\dot{B}^{\frac dp-1}_{p,1} \cap \dot{B}^{\frac dp}_{p,1})}\|{U}\|_{L^1_{t}(\dot{B}^{\frac dp+1}_{p,1})} \lesssim \|(a,u)\|_{Y_p}^2.
\end{aligned}$$
Similarly, we estimate $G_4\triangleq \Goptau(J(a)\nabla a)$ using \eqref{prodlaws5}$_2$ and Proposition \ref{l:compo}:
$$\begin{aligned}\|G_4\|^{\ell}_{L^1_{t}(\dot{B}^{\frac d2-1}_{2,1})}
  &\lesssim \int_0^t\biggl(\|\Goptau J(a)\|_{\dot{B}_{p,1}^{\frac dp-1}} \|\nabla A\|_{\dot{B}_{p,1}^{\frac dp}} +\|\Goptau J(a)\|_{\dot{B}_{p,1}^{\frac dp}} \|\nabla A\|_{\dot{B}_{p,1}^{\frac dp-1}}\biggr) d\tau\\  
   &\lesssim \int_0^t\biggl(\|A\|_{\dot{B}_{p,1}^{\frac dp-1}} \|\nabla A\|_{\dot{B}_{p,1}^{\frac dp}} +\|A\|_{\dot{B}_{p,1}^{\frac dp}} \|\nabla A\|_{\dot{B}_{p,1}^{\frac dp-1}}\biggr) d\tau\\  
  &\lesssim \|(a,u)\|_{Y_p}^2.\end{aligned}$$
In order to bound the term corresponding to $g_3,$
it suffices to consider $\wt G_3\triangleq \Goptau(1-I(a))\nabla (\tmu(a)\nabla u),$ the other 
term being similar. Now,  we have:
$$ \|G_2\|^{\ell}_{L^1_{t}(\dot{B}^{\frac d2-1}_{2,1})}\leq
 \underbrace{\|\Goptau \big(\tmu(a)\nabla u\big)\|^{\ell}_{L^1_{t}(\dot{B}^{\frac d2}_{2,1})}}_{I} 
 +\underbrace{\|\Goptau \Big(I(a)\nabla \big(\tmu(a)\nabla u\big) \Big)\|^{\ell}_{L^1_{t}(\dot{B}^{\frac d2-1}_{2,1})}}_{II}.$$
 The first term may be bounded (taking once again advantage of the low frequencies cut-off) according to \eqref{prodlaws5}$_2$ and Proposition \ref{l:compo} as follows:
$$ \begin{aligned}
  I &\lesssim \|\Goptau \big(\tmu(a)\nabla u\big)\|^{\ell}_{L^1_{t}(\dot{B}^{\frac d2-1}_{2,1})}\\
  &\lesssim \int_0^t \Big( \|\Goptau \big( \tmu (a)\big)\|_{\dot{B}^{\frac dp-1}_{p,1}} \|U\|_{\dot{B}^{\frac dp+1}_{p,1}} +\|\Goptau \big( \tmu (a)\big)\|_{\dot{B}^{\frac dp}_{p,1}} \|U\|_{\dot{B}^{\frac dp}_{p,1}} \Big) d\tau.
 \end{aligned}$$
 The second term is bounded using \eqref{prodlaws5}$_3$ and Propositions \ref{l:compo} and \ref{prop3.4}:
 $$\begin{aligned}
 II &\lesssim \int_0^t \|\Goptau I(a)\|_{\dot{B}^{\frac dp-1}_{p,1} \cap \dot{B}^{\frac dp}_{p,1}} \|\Goptau \nabla \big(\tmu(a)\nabla u\big)\|_{\dot{B}^{\frac dp-1}_{p,1}} d\tau\\
 &\lesssim \int_0^t \|A\|_{\dot{B}^{\frac dp-1}_{p,1} \cap \dot{B}^{\frac dp}_{p,1}} \|\Goptau \tmu(a)\|_{\dot{B}_{p,1}^\frac dp} \|U\|_{\dot{B}^{\frac dp+1}_{p,1}}\,d\tau.
\end{aligned}$$
We finally obtain that
$$\|G_2\|^{\ell}_{L^1_{t}(\dot{B}^{\frac d2-1}_{2,1})} \lesssim (1+\|(a,u)\|_{Y_p}) \|(a,u)\|_{Y_p}^2.$$
To bound the capillary terms, we use  \eqref{prodlaws5}$_2,$ writing that
$$\begin{aligned}
 \|\Goptau \nabla(\tilde{\kappa}(a)\Delta a)\|^{\ell}_{L^1_{t}(\dot{B}^{\frac d2-1}_{2,1})}
 & \lesssim \|\Goptau (\tilde{\kappa}(a)\Delta a)\|^{\ell}_{L^1_{t}(\dot{B}^{\frac d2-1}_{2,1})}\\
 &\lesssim  \|\Goptau \tilde{\kappa}(a)\|_{L^\infty_{t}(\dot{B}^{\frac dp-1}_{p,1})}\|\Delta {A}\|_{L^1_{t}(\dot{B}^{\frac dp}_{p,1})} \\&\hspace{3cm}+\|\Delta {A}\|_{L^1_{t}(\dot{B}^{\frac dp-1}_{p,1})}\|\Goptau \tilde{\kappa}(a)\|_{L^\infty_{t}(\dot{B}^{\frac dp}_{p,1})}\\
 &\lesssim  \|{A}\|_{L^\infty_{t}(\dot{B}^{\frac dp-1}_{p,1})}\|{A}\|_{L^1_{t}(\dot{B}^{\frac dp+2}_{p,1})}+\| {A}\|_{L^1_{t}(\dot{B}^{\frac dp+1}_{p,1})}\|{A}\|_{L^\infty_{t}(\dot{B}^{\frac dp}_{p,1})}.
\end{aligned}$$
As we just have to bound the low frequencies, one gets  thanks to \eqref{prodlaws5}$_2,$
$$\begin{aligned}
 \Big\|\Goptau \nabla \Big(\frac{1}{2}\nabla \tka(a)\cdot \nabla a\Big)\Big\|^{\ell}_{L^1_{t}(\dot{B}^{\frac d2-1}_{2,1})} &\lesssim\|\Goptau (\nabla \tka (a)\cdot\nabla a)\|^{\ell}_{L^1_{t}(\dot{B}^{\frac d2-1}_{2,1})}\\
&\lesssim \int_0^t \Bigl(\|\Goptau (\nabla \tka (a))\|_{\dot{B}_{p,1}^{\frac dp-1}} \|\nabla A\|_{\dot{B}_{p,1}^{\frac dp}}
\\&\hspace{1.8cm} +\|\Goptau (\nabla \tka (a))\|_{\dot{B}_{p,1}^{\frac dp}} \|\nabla A\|_{\dot{B}_{p,1}^{\frac dp-1}}\Bigr)d\tau.
\end{aligned}$$
Thanks to Proposition \ref{l:compo} we see  that the first term is bounded by:
$$\begin{aligned}
 \|\Goptau \tka (a)\|_{L^\infty_{t}(\dot{B}^{\frac dp}_{p,1})}&\|A\|_{L^1_{t}(\dot{B}^{\frac dp+1}_{p,1})} \lesssim \|{A}\|_{L^\infty_{t}(\dot{B}^{\frac dp}_{p,1})}\|{A}\|_{L^1_{t}(\dot{B}^{\frac dp+1}_{p,1})}\\
&\lesssim  \Big(\|{A}\|^{\ell}_{L^\infty_{t}(\dot{B}^{\frac d2-1}_{2,1})}\!+\! \|{A}\|^{h}_{L^\infty_{t}(\dot{B}^{\frac dp}_{p,1})}\Big) \Big(\|{A}\|^{\ell}_{L^1_{t}(\dot{B}^{\frac d2+1}_{2,1})} \!+\!\|{A}\|^{h}_{L^1_{t}(\dot{B}^{\frac dp+2}_{p,1})}\Big).\\
\end{aligned}$$
We have to be careful for the last term as $\frac dp+1$ is not in the range of Proposition \ref{l:compo}. 
However, we have  $\nabla \tka (a)=\tka' (a)\nabla a$ and thus, 
$$\begin{aligned}
 \int^{t}_{0}\|\Goptau (\tka' (a)&\nabla a)\|_{\dot{B}^{\frac dp}_{p,1}} \|{A}\|_{\dot{B}^{\frac dp}_{p,1}}d\tau\\
 &\lesssim
 \int^{t}_{0}\Big( \|\Goptau \big(\tka' (a)-\tka' (0)\big)\|_{\dot{B}^{\frac dp}_{p,1}} +|\tka' (0)| \Big)\|\nabla A\|_{\dot{B}^{\frac dp}_{p,1}} \|{A}\|_{\dot{B}^{\frac dp}_{p,1}} d\tau\\
  &\lesssim (1+\|{A}\|_{L^\infty_{t}(\dot{B}^{\frac dp}_{p,1})}) \|{A}\|_{L^\infty_{t}(\dot{B}^{\frac dp}_{p,1})} \|{A}\|_{L^1_{t}(\dot{B}^{\frac dp+1}_{p,1})},
\end{aligned}$$
which enables us to obtain:
$$\|\Goptau g_5(\tau)\|^{\ell}_{L^1_{t}(\dot{B}^{\frac d2-1}_{2,1})} \lesssim (1+\|(a,u)\|_{Y_p}) \|(a,u)\|_{Y_p}^2.$$
To complete the proof, we need to bound the high frequencies of $F$ and $G.$ This 
turns out to be rather straightforward, as we only need Proposition \ref{prop3.6} and 
Lemma \ref{l:compo}. More precisely, we get
$$\begin{aligned}
\|F\|^h_{L^1_{t}(\dot{B}^{\frac dp}_{p,1})} &\lesssim \|\Goptau (a\div u+
u\cdot\nabla a)\|^h_{L^1_{t}(\dot{B}^{\frac dp}_{p,1})}
 \\ &\lesssim
\|{A}\|_{L^\infty_{t}(\dot{B}^{\frac dp}_{p,1})}\|{\div U}\|_{L^1_{t}(\dot{B}^{\frac dp}_{p,1})}+\|{U}\|_{L^2_{t}(\dot{B}^{\frac dp}_{p,1})}\|{\nabla A}\|_{L^2_{t}(\dot{B}^{\frac dp}_{p,1})}
\end{aligned}$$
and 
$$\displaylines{
\|G\|^h_{L^1_{t}(\dot{B}^{\frac dp-1}_{p,1})}\lesssim \|{U}\|_{L^\infty_{t}(\dot{B}^{\frac dp-1}_{p,1})}\|{U}\|_{L^1_{t}(\dot{B}^{\frac dp+1}_{p,1})}+\|{A}\|^2_{L^2_{t}(\dot{B}^{\frac dp}_{p,1})}
\hfill\cr\hfill+\|{A}\|_{L^\infty_{t}(\dot{B}^{\frac dp}_{p,1})}\|{U}\|_{L^1_{t}(\dot{B}^{\frac dp+1}_{p,1})}+\|{A}\|_{L^\infty_{t}(\dot{B}^{\frac dp}_{p,1})}\|{A}\|_{L^1_{t}(\dot{B}^{\frac dp+2}_{p,1})}\hfill\cr\hfill+(1+\|{A}\|_{L^\infty_{t}(\dot{B}^{\frac dp}_{p,1})})\|{A}\|^2_{L^2_{t}(\dot{B}^{\frac dp+1}_{p,1})}.}$$
Putting all the previous estimates  together ends the proof of Lemma \ref{lem4.3}.
\end{proof}
Finally, as in the previous section, using a suitable contracting mapping argument enables us to complete the proof of  Theorem \ref{thm4.1}.
The details are left to the reader. 
As for uniqueness, it stems from  \cite[Thm. 5]{DD}.


\subsection{The time-decay of solutions in Besov spaces}\setcounter{equation}{0}\label{sec:5}
The aim of this part is to  exhibit 
 the time-decay properties of the solutions that have been constructed in Theorems \ref{thm3.1} and \ref{thm4.1}.  Those properties will come up as a consequence of the following lemma. 
 \begin{lem} There exists a universal constant $c>0$  such that for all  $s\in\R,$ there exists  
 a constant $C_s$  such that for any 
tempered distribution $u,$ real number $\alpha>0$ and integer $j\in\Z,$ the following
inequality holds true:
\begin{equation}\label{eq:expdecay}\|\Lambda^se^{-\alpha \Lambda_1}\ddj u\|_{L^p}
\leq C_s2^{js} e^{-c\alpha 2^j} \|\ddj u\|_{L^p}.\end{equation}
 \end{lem}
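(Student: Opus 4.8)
The plan is to reduce \eqref{eq:expdecay} to a uniform $L^1$ bound for a Fourier kernel, and then to read off the exponential decay from the frequency localization. First I would fix a smooth $\widetilde\varphi$ supported in an annulus $\{\rho_1\le|\xi|\le\rho_2\}$ with $0<\rho_1<\rho_2$, equal to $1$ on the support of the profile defining $\ddj$, so that $\ddj u=\mathcal F^{-1}\bigl(\widetilde\varphi(2^{-j}\cdot)\,\widehat{\ddj u}\bigr)$. Then $\Lambda^se^{-\alpha\Lambda_1}\ddj u=\mathcal G_{j,\alpha}\star\ddj u$ with $\mathcal G_{j,\alpha}=\mathcal F^{-1}\bigl(|\xi|^se^{-\alpha|\xi|_1}\widetilde\varphi(2^{-j}\xi)\bigr)$, and by Young's inequality it suffices to show
$$\|\mathcal G_{j,\alpha}\|_{L^1}\le C_s\,2^{js}\,e^{-c\alpha2^j}.$$
Rescaling $\xi=2^j\eta$ (which does not change $L^1$ norms of inverse Fourier transforms of symbols $M(2^{-j}\cdot)$) turns this into the scale-invariant claim: for $\beta:=\alpha2^j\ge0$,
$$\bigl\|\mathcal F^{-1}\bigl(|\eta|^se^{-\beta|\eta|_1}\widetilde\varphi(\eta)\bigr)\bigr\|_{L^1}\le C_s\,e^{-c\beta},$$
where from now on $c>0$ is allowed to depend only on $d$ and $\rho_1$ (both fixed once and for all).

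The only genuine difficulty is that $\eta\mapsto|\eta|_1$ is merely Lipschitz, so a direct integration by parts is unavailable. I would circumvent this by splitting $\widetilde\varphi$ with a partition of unity adapted to the coordinate carrying most of the $\ell^2$ mass: since $\max_i|\eta_i|\ge|\eta|/\sqrt d$ on the annulus, one can write $\widetilde\varphi=\sum_{i=1}^d\sum_{\epsilon\in\{\pm1\}}\widetilde\varphi_{i,\epsilon}$ with $\widetilde\varphi_{i,\epsilon}$ smooth and supported in $\{\epsilon\,\eta_i\ge|\eta|/(2\sqrt d)\}\cap\{\rho_1\le|\eta|\le\rho_2\}$, so that $\epsilon\eta_i\ge\rho_1/(2\sqrt d)$ there. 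On the support of $\widetilde\varphi_{i,\epsilon}$ one has $|\eta|_1=\epsilon\,\eta_i+\sum_{k\ne i}|\eta_k|$, whence, with $c:=\rho_1/(4\sqrt d)$,
$$|\eta|^se^{-\beta|\eta|_1}\widetilde\varphi_{i,\epsilon}(\eta)=e^{-\beta c}\,\underbrace{\Bigl(|\eta|^se^{-\beta(\epsilon\eta_i-c)}\widetilde\varphi_{i,\epsilon}(\eta)\Bigr)}_{=:\,h^{i,\epsilon}_\beta(\eta)}\;\underbrace{\Bigl(\prod_{k\ne i}e^{-\beta|\eta_k|}\Bigr)}_{=:\,q_\beta(\eta)}\cdotp$$
Here $\mathcal F^{-1}q_\beta$ is the tensor product of one–dimensional Poisson kernels in the variables $\eta_k$, $k\ne i$, with a Dirac mass in $\eta_i$, hence a nonnegative measure of total mass $1$; convolving against it does not increase $L^1$ norms. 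So everything reduces to bounding $\|\mathcal F^{-1}h^{i,\epsilon}_\beta\|_{L^1}$ uniformly in $\beta\ge0$.

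Now $h^{i,\epsilon}_\beta$ is \emph{smooth} and compactly supported, the factor $e^{-\beta(\epsilon\eta_i-c)}$ being a genuine $C^\infty$ function of $\eta_i$, and crucially the weight satisfies $\epsilon\eta_i-c\ge\rho_1/(4\sqrt d)=c>0$ on the support. Hence, for any multi-index $\gamma$, the derivative $\partial^\gamma h^{i,\epsilon}_\beta$ is a finite sum of terms of the form $(\text{smooth, compactly supported, }\beta\text{-independent})\times\beta^{m}e^{-\beta(\epsilon\eta_i-c)}$ with $m\le|\gamma|$, and since $\sup_{\beta\ge0}\beta^me^{-c\beta}<\infty$ these are bounded uniformly in $\beta$. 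Therefore $\|(\Id-\Delta_\eta)^Nh^{i,\epsilon}_\beta\|_{L^1}\le C_{N,s}$ uniformly in $\beta\ge0$, and for $N>d/2$ the elementary bound $\|\mathcal F^{-1}g\|_{L^1}\lesssim\|(1+|\cdot|^2)^N\mathcal F^{-1}g\|_{L^\infty}=\|\mathcal F^{-1}\bigl((\Id-\Delta_\eta)^Ng\bigr)\|_{L^\infty}\lesssim\|(\Id-\Delta_\eta)^Ng\|_{L^1}$ gives $\|\mathcal F^{-1}h^{i,\epsilon}_\beta\|_{L^1}\le C_s$ uniformly in $\beta$. Summing the $2d$ pieces and undoing the rescaling yields \eqref{eq:expdecay} with $c=\rho_1/(4\sqrt d)$ and some $C_s$, for every $1\le p\le\infty$. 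The main obstacle is precisely the non-smoothness of $|\cdot|_1$; once it is resolved by the coordinate-adapted partition of unity above, the remaining steps are the routine heat-kernel–type computations already carried out in the proof of Lemma \ref{lem4.1}.
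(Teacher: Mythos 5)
Your proof is correct. It shares the paper's overall skeleton — reduce to a uniform $L^1$ bound on a frequency-localized kernel via Young's inequality, rescale to unit frequency, and split the annulus with a partition of unity adapted to the coordinate direction carrying a definite fraction of $|\eta|$ — but the key technical step is executed quite differently. The paper integrates by parts directly against the non-smooth symbol $e^{-\alpha|\xi|_1}\phi_1(\xi)$, which forces it to handle the distributional identity $\alpha^2e^{-\alpha|r|}-(e^{-\alpha|r|})''=2\alpha\delta_0$ and to use the vanishing of $\phi_1$ on $\{\xi_1=0\}$ to kill the resulting Dirac masses; this produces a bound of the form $C\bigl(\tfrac{1+\alpha}{\alpha}\bigr)^de^{-c\alpha}$, singular as $\alpha\to0$, which must then be combined with the trivial contraction bound $\|e^{-\alpha\Lambda_1}v\|_{L^p}\leq\|v\|_{L^p}$ to cover small $\alpha$. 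You instead peel off the rough part of the symbol, $\prod_{k\neq i}e^{-\beta|\eta_k|}$, as the Fourier transform of a nonnegative measure of unit mass (a tensor product of Poisson kernels and a Dirac), which is harmless under convolution, and are left with a genuinely smooth compactly supported symbol $|\eta|^se^{-\beta(\epsilon\eta_i-c)}\widetilde\varphi_{i,\epsilon}$ whose derivatives are bounded uniformly in $\beta\geq0$ because $\epsilon\eta_i-c\geq c>0$ on its support; the standard $(\Id-\Delta_\eta)^N$ argument then closes the estimate. This buys you a bound uniform down to $\beta=0$ with no separate small-$\alpha$ case, it treats all $s\in\R$ in one pass (the paper first does $s=0$ and then invokes the Bernstein-type equivalence \eqref{R-E14b}), and it avoids distributional derivatives altogether; the paper's computation, on the other hand, is more explicit and self-contained, reusing the nonnegativity of the one-dimensional Poisson kernel that it has already established for \eqref{eq:decay1}.
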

 \begin{proof}
 The starting point is the fact that, by definition of operator $e^{-\alpha\Lambda_1},$ we have
 for all $v\in\cS'(\R^d),$ 
 $$ e^{-\alpha\Lambda_1} v= h_\alpha\star v\with
 h_\alpha=\cF^{-1}(e^{-\alpha |\cdot|_1})\cdotp
 $$
 Now, we notice that $h_\alpha$ is nonnegative, since
 $$\int_\R e^{-|\eta|}e^{ix\eta}\,d\eta=\frac{2}{1+x^2}$$
 and,  owing to the definition of $|\xi|_1,$ we have
 $$
 \cF^{-1}(e^{-\alpha |\cdot|_1})(x)=\frac1{(2\pi)^d}
 \prod_{j=1}^d\biggl(\int_\R e^{-\alpha |\xi_j|}e^{ix_j\xi_j}\,d\xi_j\biggr)\cdotp
$$
Therefore
$$
\|h_\alpha\|_{L^1}=\int_{\R^d} h_\alpha(x)\,dx = \cF( \cF^{-1}(e^{-\sqrt\alpha |\cdot|_1}))(0)=1.
$$
From this, we deduce by Young inequality that for all $\alpha\geq0,$
\begin{equation}\label{eq:decay1}
\|e^{-\alpha \Lambda_1}v\|_{L^p}\leq \|v\|_{L^p}.
\end{equation}
In order to get \eqref{eq:expdecay} for $s=0,$ one has to refine the argument. 
First, performing a suitable rescaling reduces the proof to the case $j=0.$
Then we introduce a family   $(\phi_k)_{1\leq k\leq d}$ of smooth functions on $\R^d$ such that
\begin{enumerate}
\item $\Supp\phi_k\subset\bigl\{\xi\in\R^d\;,\; \frac34\leq|\xi|\leq \frac83 \andf \frac3{4\sqrt{d}}\leq |\xi_k|\bigr\};$
\item $\sum_{k=1}^d \phi_k\equiv 1$ on $\Supp\varphi$, where $\varphi$ is the function
used in the definition of the Littlewood-Paley decomposition. 
\end{enumerate}
As we obviously have
$$
e^{-\alpha|\xi|_1} \cF(\dot\Delta_0 u)(\xi)=\sum_{k=1}^d (e^{-\alpha|\xi|_1} \phi_k(\xi))\: \cF(\dot\Delta_0 u)(\xi),
$$
one may write 
$$
e^{-\alpha \Lambda_1}\dot\Delta_0 u=\sum_{k=1}^d h_k\star \dot\Delta_0 u
\with h_k\triangleq \cF^{-1}(e^{-\alpha|\cdot|_1} \phi_k).$$
If we prove that  for some $c>0$ and $C>0$ independent of $\alpha,$  we have
\begin{equation}\label{eq:hk} 
\|h_k\|_{L^1}\leq C\biggl(\frac{1+\alpha}{\alpha}\biggr)^de^{-c\alpha},
\end{equation} then, combining with \eqref{eq:decay1} will complete the proof of the lemma for $s=0.$ 
 \medbreak
 Let us prove \eqref{eq:hk} for $k=1$ (the other cases being similar). 
  Then we introduce the notation $\xi=(\xi_1,\xi')$ and $x=(x_1,x').$ Since  $(\alpha^2+x_1^2) e^{ix\cdot\xi} = (\alpha^2-\d^2_{\xi_1\xi_1})(e^{ix\cdot\xi}),$ 
integrating by parts with respect to the variable $\xi_1$ in the integral defining  $h_1$  yields:
$$(\alpha^2+x_1^2) h_1(x)=\frac1{(2\pi)^d}\int_{\R^d} e^{ix\cdot \xi} e^{-\alpha|\xi'|_1}(\alpha^2-\d^2_{11})(\phi_1(\xi)e^{-\alpha|\xi_1|})\,d\xi.$$
Now, let us observe that 
$$
(e^{-\alpha |r|})'=-\alpha e^{-\alpha|r|} \sgn r\andf\alpha^2 e^{-\alpha |r|} - (e^{-\alpha |r|})'' = 2\alpha \delta_0.
$$
Therefore, 
$$
(\alpha^2-\d^2_{11})(\phi_1(\xi)e^{-\alpha|\xi_1|})=2\alpha\phi_1(0, \xi') \delta_{\xi_1=0} 
+e^{-\alpha|\xi_1|} \bigl(2\alpha \sgn(\xi_1)\d_1\phi_1(\xi)-\d^2_{11} \phi_1(\xi)\bigr),
$$
and thus (taking advantage of the fact that $\phi_1(0,\xi')=0$)
$$\displaylines{\quad(\alpha^2+x_1^2) h_1(x)=\frac1{(2\pi)^d} \int_{\R^d} e^{ix\cdot \xi} e^{-\alpha|\xi|_1}(2\alpha \sgn(\xi_1)\d_1\phi_1(\xi)-\d^2_{11}\phi_1(\xi))\,d\xi \cdotp\quad}
$$
Multiplying by $(\alpha^2+x_2^2)$, the same arguments lead to (denoting $\xi_2'=(\xi_1,0, \xi_2,...,\xi_d)$ and $\phi_1^2(\xi)= 2\alpha \sgn(\xi_1)\d_1\phi_1(\xi)-\d^2_{11} \phi_1(\xi)$)
$$\displaylines{\quad(\alpha^2+x_1^2)(\alpha^2+x_2^2) h_1(x) =\frac1{(2\pi)^d}\biggl(2\alpha\int_{\R^{d-1}}e^{ix_2'\cdot\xi_2'}e^{-\alpha|\xi_2'|_1}\phi_1^2(\xi_2')\,d\xi_2'\hfill\cr\hfill+
\int_{\R^d} e^{ix\cdot \xi} e^{-\alpha|\xi|_1}(2\alpha \sgn(\xi_2)\d_2\phi_1^2(\xi)-\d^2_{22})\phi_1^2(\xi)\,d\xi\biggr)\cdotp\quad}
$$
Multiplying the above equality by $ (\alpha^2+x_3^2)\dotsm(\alpha^2+x_{d}^2),$ repeating 
the above computation, and using the fact that, 
$$\forall \xi\in \Supp\phi_1,\;
e^{-\alpha|\xi|_1}\leq e^{-\alpha|\xi_1|}\leq e^{-\frac{3\alpha}{4\sqrt d}},$$
we end up with  
$$\prod_{\ell=1}^d (\alpha^2+x_\ell^2) h_1(x)\leq C (\alpha+1)^de^{-\frac{3\alpha}{4\sqrt d}},$$ 
which implies \eqref{eq:hk}, and thus the lemma for $s=0.$
\bigbreak
Proving the general case $s\geq0$ follows from the case $s=0$: indeed, Inequality  
 \eqref{R-E14b} ensures that
$$\|\Lambda^{s}e^{-\alpha \Lambda_1}\ddj u\|_{L^p}\leq C_s 2^{js}  \|e^{-\alpha \Lambda_1}\ddj u\|_{L^p},$$
and bounding the right-hand side according to \eqref{eq:expdecay} thus yields the desired inequality.
 \end{proof}
One can now state our main decay estimates.
\begin{thm}\label{thm5.1}
Let $(\varrho,u)$ be the solution constructed in Theorem \ref{thm4.1}. Then for any $s\in[0,\infty[,$ 
there exists a constant $C_s$  such that for all $t>0,$ it holds that
$$\begin{aligned}
&\|\varrho(t)-\bar{\varrho}\|_{\dot{B}^{\frac {d}{2}-1+s}_{2,1}}^\ell\leq C_s  X_{p,0}t^{-\frac s2},\qquad
&&\|u(t)\|_{\dot{B}^{\frac {d}{2}-1+s}_{2,1}}^\ell\leq C_s X_{p,0}t^{-\frac s2},\\
&\|\varrho(t)-\bar{\varrho}\|_{\dot{B}^{\frac {d}{p}+s}_{p,1}}^h\leq C_s X_{p,0}t^{-\frac s2}e^{-c\sqrt t},\qquad
&&\|u(t)\|_{\dot{B}^{\frac {d}{p}-1+s}_{p,1}}^h\leq C_s X_{p,0}t^{-\frac s2}e^{-c\sqrt t}\cdotp
\end{aligned}$$
\end{thm}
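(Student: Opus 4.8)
The plan is to deduce the decay estimates directly from the Gevrey analyticity already established in Theorem \ref{thm4.1}, combined with the pointwise exponential-decay estimate \eqref{eq:expdecay}. The key observation is that membership of $(a,u)$ in $Y_p$ means that $e^{\sqrt{c_0 t}\Lambda_1}(a,u)$ enjoys, uniformly in time, the same Besov regularity as $(a,u)$ itself; writing $(a,u) = e^{-\sqrt{c_0 t}\Lambda_1}\bigl(e^{\sqrt{c_0 t}\Lambda_1}(a,u)\bigr)$ and applying \eqref{eq:expdecay} blockwise then produces an extra factor $e^{-c\sqrt{c_0 t}\,2^j}$ in front of each dyadic block, which is precisely what converts a bounded low-regularity norm into a decaying higher-regularity norm.

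First I would treat the high frequencies. Fix $s\ge 0$. For $j\ge k_0$ we write, using \eqref{eq:expdecay} with $\alpha=\sqrt{c_0 t}$ applied to the function $\ddj\bigl(e^{\sqrt{c_0 t}\Lambda_1}a\bigr)$,
$$
2^{j(\frac dp+s)}\|\ddj a(t)\|_{L^p}
= 2^{j(\frac dp+s)}\|\Lambda^0 e^{-\sqrt{c_0 t}\Lambda_1}\ddj\bigl(e^{\sqrt{c_0 t}\Lambda_1}a\bigr)\|_{L^p}
\le C_s\, 2^{j\frac dp}\,e^{-c\sqrt{c_0 t}\,2^j}\,\|\ddj\bigl(e^{\sqrt{c_0 t}\Lambda_1}a\bigr)\|_{L^p}.
$$
Since $j\ge k_0$, we have $e^{-c\sqrt{c_0 t}\,2^j}=e^{-\frac c2\sqrt{c_0 t}\,2^j}e^{-\frac c2\sqrt{c_0 t}\,2^j}\le e^{-\frac c2\sqrt{c_0 t}\,2^{k_0}}\sup_{x\ge 0}\bigl(x^{s}e^{-\frac c2 x}\bigr)(\sqrt{c_0 t}\,2^j)^{-s}\lesssim_s t^{-s/2}e^{-c'\sqrt t}$, uniformly in $j\ge k_0$. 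Summing over $j\ge k_0$ and using that $\|e^{\sqrt{c_0 t}\Lambda_1}a\|^h_{\dot B^{\frac dp}_{p,1}}\le \|(a,u)\|_{Y_p}\lesssim X_{p,0}$ (this is the definition of $Y_p$, together with the a priori bound from Lemma \ref{lem4.3} and the fixed point argument), we obtain $\|\varrho(t)-\bar\varrho\|^h_{\dot B^{\frac dp+s}_{p,1}}\lesssim_s X_{p,0}\,t^{-s/2}e^{-c\sqrt t}$; the velocity estimate is identical, starting from $\|e^{\sqrt{c_0 t}\Lambda_1}u\|^h_{\dot B^{\frac dp-1}_{p,1}}\lesssim X_{p,0}$.

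For the low frequencies the same scheme applies, but one has to be a little more careful since $e^{-c\sqrt{c_0 t}\,2^j}$ is no longer small when $2^j\ll 1/\sqrt t$. Here one uses instead the $L^1_t(\dot B^{\frac d2+1}_{2,1})$ control: for $j\le k_0$,
$$
2^{j(\frac d2-1+s)}\|\ddj a(t)\|_{L^2}
\le C_s\,2^{j(\frac d2-1)}\bigl(\sqrt{c_0 t}\,2^j\bigr)^{-s}\sup_{x\ge0}(x^s e^{-cx})\cdot\bigl(\sqrt{c_0 t}\,2^j\bigr)^{s}\,2^{js}e^{-c\sqrt{c_0 t}\,2^j}\|\ddj\bigl(e^{\sqrt{c_0 t}\Lambda_1}a\bigr)\|_{L^2},
$$
so it suffices to bound $\sum_{j\le k_0}2^{j(\frac d2-1)}t^{s/2}2^{js}e^{-c\sqrt{c_0 t}\,2^j}\|\ddj(e^{\sqrt{c_0 t}\Lambda_1}a)\|_{L^2}$ by $C_s X_{p,0}$. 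Splitting the sum at $2^j\sim 1/\sqrt t$ one sees that the contribution of $2^j\gtrsim 1/\sqrt t$ is handled exactly as above, while for $2^j\lesssim 1/\sqrt t$ one has $t^{s/2}2^{js}\lesssim 1$, and one simply uses $\sum_{j\le k_0}2^{j(\frac d2-1)}\|\ddj(e^{\sqrt{c_0 t}\Lambda_1}a)\|_{L^2}\lesssim\|e^{\sqrt{c_0 t}\Lambda_1}a\|^\ell_{\dot B^{\frac d2-1}_{2,1}}\lesssim\|(a,u)\|_{Y_p}\lesssim X_{p,0}$. (Alternatively, one can interpolate the uniform-in-time bound $\|a(t)\|^\ell_{\dot B^{\frac d2-1}_{2,1}}\lesssim X_{p,0}$ coming from $Y_p\subset X_p$ against the exponential decay of high blocks; either route works.) The velocity low-frequency estimate is the same.

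The main obstacle is essentially bookkeeping rather than conceptual: one must verify that the functions $\bar F(t)\triangleq e^{\sqrt{c_0 t}\Lambda_1}a$ and $e^{\sqrt{c_0 t}\Lambda_1}u$ are genuinely bounded, uniformly in $t\ge 0$, in the \emph{critical} norms $\dot B^{\frac d2-1}_{2,1}$ (low frequencies) and $\dot B^{\frac dp}_{p,1}$, $\dot B^{\frac dp-1}_{p,1}$ (high frequencies), which is exactly the content of $(a,u)\in Y_p$ together with the a priori estimate of Lemma \ref{lem4.3}; and one must keep track of the fact that the Gevrey radius is $\sqrt{c_0 t}$ (with $\Lambda_1$, not $\Lambda$), so the gain of derivatives costs the factor $e^{-c\sqrt{c_0 t}\,2^j}$ of \eqref{eq:expdecay} rather than the Gaussian $e^{-c t 2^{2j}}$ one would get from parabolic smoothing alone — which is precisely why the high-frequency decay is $t^{-s/2}e^{-c\sqrt t}$ and not purely exponential. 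No genuinely new estimate is needed beyond \eqref{eq:expdecay} and the already-established global bound $\|(a,u)\|_{Y_p}\lesssim X_{p,0}$.
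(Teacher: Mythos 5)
Your proposal is correct and follows essentially the same route as the paper: both deduce the decay from the global bound $\|(a,u)\|_{Y_p}\lesssim \cX_{p,0}$ by writing $(a,u)=e^{-\sqrt{c_0t}\Lambda_1}\bigl(e^{\sqrt{c_0t}\Lambda_1}(a,u)\bigr)$, applying the blockwise estimate \eqref{eq:expdecay}, and absorbing $(\sqrt t\,2^j)^s$ into half of the exponential (your case split at $2^j\sim 1/\sqrt t$ for the low frequencies is just a spelled-out version of the paper's one-line observation that $x^se^{-cx}$ is bounded). The only blemish is a dropped factor $2^{js}$ in your first displayed high-frequency inequality, which your subsequent text implicitly restores via the $(\sqrt{c_0t}\,2^j)^{-s}$ factor.
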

\begin{proof}
Recall that the solution constructed in Theorem \ref{thm4.1} fulfills
$$\|(\varrho-\bar{\varrho}, u)\|_{Y_p}\leq  CX_{p,0}.$$
Now,  Inequality \eqref{R-E14b} implies that
$$
\|u(t)\|_{\dot{B}^{\frac {d}{2}-1+s}_{2,1}}^\ell\leq C_s\|\Lambda^su(t)\|_{\dot{B}^{\frac {d}{2}-1}_{2,1}}^\ell.
$$
Then we write, denoting $U= e^{\sqrt{c_0t}\Lambda_1}u$ and using the previous lemma, that 
$$\begin{aligned}
t^{\frac s2}\|\Lambda^su\|^\ell_{\dot B^{\frac d2-1}_{2,1}}
&=\sum_{j\leq k_0} t^{\frac s2}2^{j(\frac d2-1)}\|\Lambda^se^{-\sqrt{c_0t}\Lambda_1}\ddj U(t)\|_{L^2}\\
&\leq C_s \sum_{j\leq k_0} (\sqrt t2^{j})^s e^{-c\sqrt{c_0t}2^j} 2^{j(\frac d2-1)}\|\ddj U(t)\|_{L^2}\\
&\leq C_s \|U(t)\|_{\dot B^{\frac d2-1}_{2,1}}^\ell\\
&\leq C_s X_{p,0}.
\end{aligned}
$$
Similarly, we have
$$\begin{aligned}
t^{\frac s2}\|u(t)\|_{\dot{B}^{\frac {d}{p}-1+s}_{p,1}}^h&\leq 
C_s\|\Lambda^su(t)\|_{\dot{B}^{\frac {d}{p}-1}_{p,1}}^h \\
&\leq C_s \sum_{j\geq k_0} 2^{j(\frac dp-1)} t^{\frac s2}\|e^{-\sqrt{c_0t}\Lambda_1}\Lambda^s\ddj U(t)\|_{L^p}\\
&\leq C_s\sum_{j\geq k_0}  e^{-\frac c2\sqrt{c_0t}2^j}2^{j(\frac dp-1)} (\sqrt t 2^j)^s e^{-\frac c2\sqrt{c_0t}2^j}
\|\ddj U(t)\|_{L^p}\\
&\leq C_s  e^{-\frac c2\sqrt{c_0t}2^{k_0}} \|U(t)\|_{\dot B^{\frac dp-1}_{p,1}}^h\\
&\leq C_s e^{-\frac c2\sqrt{c_0t}2^{k_0}} X_{p,0}.\end{aligned}
$$
Proving the inequalities for $\varrho$ is totally similar. 
\end{proof}
\begin{rem}\label{rem5.1}
The decay estimate pointed out  in Theorem  \ref{thm5.1}  is much better 
than that of the usual compressible Navier-Stokes (see for example \cite{DX}).
This reflects the parabolicity of the compressible Navier-Stokes-Korteweg system.
\end{rem}



\appendix
\section{Littlewood-Paley Decomposition and Besov Spaces}\setcounter{equation}{0}\label{sec:2}

Here we recall a few basic results concerning the Littlewood-Paley decomposition
and Besov spaces. More details may be found  in e.g. \cite[Chap. 2]{BCD}. 

To build the Littlewood-Paley decomposition, one
need a smooth radial function $\chi$ supported in the ball $\mathcal{B}(0,\frac{4}{3})$ and  with value $1$ on $\mathcal{B}(0,\frac{3}{4}).$ Let $\varphi(\xi)\triangleq\chi(\xi/2)-\chi(\xi).$
Then,  $\varphi$ is compactly supported in the annulus $\{\xi\in \mathbb{R}^d, \frac{3}{4}\leq|\xi|\leq\frac{8}{3}\}$ 
and fulfills
$$
\sum_{j\in\Z}\varphi(2^{-j}\cdot)=1\ \hbox{ in }\ \R^d\setminus\{0\}\cdotp
$$
Define the dyadic blocks $(\dot{\Delta}_j)_{j\in \mathbb{Z}}$ by $\dot{\Delta}_{j}=\varphi(2^{-j}D)$ (that is, 
$\widehat{\dot{\Delta}_{j}f}:=\varphi(2^{-j}\xi)\hat{f}(\xi)$ for all tempered distribution $f$). 
 The (formal) homogeneous Littlewood-Paley decomposition of $f$ reads
$$f=\sum_{j\in\Z}\ddj f.$$
That equality holds true in the set $\cS'$ of  tempered distributions 
whenever $f$ belongs to 
 $$\mathcal{S}'_{h}\triangleq \bigl\{f\in\cS',\ | \ \lim_{j\rightarrow-\infty}\|\dot{S}_{j}f\|_{L^\infty}=0\bigr\},$$
  where $\dot{S}_{j}$ stands for the low frequency cut-off defined by $\dot{S}_{j}=\chi(2^{-j}D)$.
  
\begin{defn}\label{defn2.1}
 For $\sigma\in\R$ and
$1\leq p,r\leq\infty,$ we set
$$
\|f\|_{\dot B^\sigma_{p,r}}=\Bigl\|2^{j\sigma}\|\ddj  f\|_{L^p(\R^d)}\Bigr\|_{\ell^r(\Z)}.
$$
We then define the homogeneous Besov space $\dot B^\sigma_{p,r}$ to be the
subset of  distributions $f\in {\cS}'_h$ such  that
$\|f\|_{\dot B^\sigma_{p,r}}<\infty.$
\end{defn}
 Homogeneous Besov spaces on $\R^d$ possess the following
 scaling invariance  for any $\sigma\in\R$ and $(p,r)\in[1,+\infty]^2$:
\begin{equation}\label{R-E7}
C^{-1}\lambda^{\sigma-\frac dp} \|f\|_{\dot B^\sigma_{p,r}}\leq
\|f(\lambda\cdot)\|_{\dot B^\sigma_{p,r}}\leq C\lambda^{\sigma-\frac dp}  \|f\|_{\dot B^\sigma_{p,r}},\qquad\lambda>0,
\end{equation}
where the constant $C$ depends only on $\sigma,$ $p$ and on the dimension $d.$
\medbreak
The following properties have been used repeatedly in the paper:
\begin{itemize}
\item The space $\dot B^s_{p,r}$ is complete whenever
$s<d/p,$ or $s\leq d/p$ and $r=1$.
  \item For any $p\in[1,\infty],$ we have the  continuous embedding $\dot B^0_{p,1}\hookrightarrow L^p\hookrightarrow \dot B^0_{p,\infty}.$
\item If  $\sigma\in\R,$ $1\leq p_1\leq p_2\leq\infty$ and $1\leq r_1\leq r_2\leq\infty,$
  then $\dot B^{\sigma}_{p_1,r_1}\hookrightarrow
  \dot B^{\sigma-d(\frac1{p_1}-\frac1{p_2})}_{p_2,r_2}.$
  \item The space  $\dot B^{\frac dp}_{p,1}$ is continuously embedded in   the set  of
bounded  continuous functions (going to $0$ at infinity if    $p<\infty$).
\item If $K$ is a smooth homogeneous of degree $m$ function   on $\R^d\setminus\{0\}$ 
that maps $\cS'_h$ to itself, then
\begin{equation}\label{R-E8}
K(D):\dot B^\sigma_{p,r}\to\dot B^{\sigma-m}_{p,r}.\end{equation}
 In particular, the gradient operator maps $\dot B^\sigma_{p,r}$
 to $\dot B^{\sigma-1}_{p,r}.$
  \end{itemize}
Let us also mention the following  interpolation inequality
 that is   satisfied whenever
   $1\leq p,r_1,r_2,r\leq\infty,$ $\sigma_1\not=\sigma_2$ and $\theta\in(0,1)$:
  \begin{equation}\label{R-E9}
  \|f\|_{\dot B^{\theta\sigma_2+(1-\theta)\sigma_1}_{p,r}}\lesssim\|f\|_{\dot B^{\sigma_1}_{p,r_1}}^{1-\theta}
  \|f\|_{\dot B^{\sigma_2}_{p,r_2}}^\theta.
  \end{equation}
The following proposition has been used in this paper.
\begin{prop}\label{prop2.1}
Let $\sigma\in \mathbb{R}$ and $1\leq p, r\leq \infty$. Let $(f_{j})_{j\in \mathbb{Z}} $ be a sequence of 
$L^p$ functions such that $\sum_{j\in\Z} f_j$ converges to some distribution $f$ in $\cS'_h$ and
$$\Bigl\|2^{j\sigma}\|f_j\|_{L^p(\R^d)}\Bigr\|_{\ell^r(\Z)}<\infty.
$$
If $\mathrm{Supp} \hat{f}_{j}\subset\mathcal{C}(0,2^jR_{1},2^jR_{2})$ for some $0<R_{1}<R_{2},$ then $f$ belongs to $\dot{B}^{\sigma}_{p,r}$ and there exists a constant $C$ such that
 \begin{equation}\label{R-E10}
\|f\|_{\dot{B}^{\sigma}_{p,r}}\leq C\Bigl\|2^{j\sigma}\|f_j\|_{L^p(\R^d)}\Bigr\|_{\ell^r(\Z)}\cdotp
 \end{equation}
 \end{prop}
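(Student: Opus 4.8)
The plan is to use the spectral-localization hypothesis to reduce, for each fixed Littlewood-Paley block $\ddk f$, the infinite series $\Sum_{j\in\Z}f_j$ to a finite sum, and then to conclude by the uniform-in-$k$ boundedness of $\ddk$ on $L^p$ together with Young's inequality for series. The first step is the elementary ``finite overlap'' observation: since $\varphi$ is supported in the annulus $\{3/4\le|\xi|\le 8/3\}$, the Fourier support of $\ddk g$ lies in $\{\frac34 2^k\le|\xi|\le \frac83 2^k\}$, whereas $\Supp\widehat{f_j}\subset\{2^jR_1\le|\xi|\le 2^jR_2\}$ by assumption. These two annuli are disjoint unless $|j-k|\le N_0$, where $N_0$ is an integer depending only on $R_1$ and $R_2$; hence $\ddk f_j\equiv0$ as soon as $|j-k|>N_0$. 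Since $\Sum_j f_j$ converges to $f$ in $\cS'_h\subset\cS'$ and $\ddk$ is continuous on $\cS'$, we get $\ddk f=\Sum_{j\in\Z}\ddk f_j=\Sum_{|j-k|\le N_0}\ddk f_j$, a finite sum.

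Next I would estimate each block. The convolution kernel of $\ddk$ is $\mathcal F^{-1}\bigl[\varphi(2^{-k}\cdot)\bigr]=2^{kd}(\mathcal F^{-1}\varphi)(2^k\cdot)$, whose $L^1$ norm equals $\|\mathcal F^{-1}\varphi\|_{L^1}$, independently of $k$; by Young's inequality $\|\ddk g\|_{L^p}\le\|\mathcal F^{-1}\varphi\|_{L^1}\|g\|_{L^p}$ for all $g\in L^p$ and all $k\in\Z$. Combining this with the triangle inequality gives
\[
2^{k\sigma}\|\ddk f\|_{L^p}\le C\Sum_{|j-k|\le N_0}2^{(k-j)\sigma}\bigl(2^{j\sigma}\|f_j\|_{L^p}\bigr),
\]
with $C=\|\mathcal F^{-1}\varphi\|_{L^1}$. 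The right-hand side is the discrete convolution of the finitely supported sequence $(c_\ell)_{\ell\in\Z}$ given by $c_\ell=2^{\ell\sigma}$ for $|\ell|\le N_0$ and $c_\ell=0$ otherwise, with the sequence $\bigl(2^{j\sigma}\|f_j\|_{L^p}\bigr)_{j\in\Z}\in\ell^r(\Z)$. Taking the $\ell^r(\Z)$ norm in $k$ and applying Young's inequality for series, we obtain
\[
\bigl\|2^{k\sigma}\|\ddk f\|_{L^p}\bigr\|_{\ell^r(\Z)}\le C\,(2N_0+1)\,2^{N_0|\sigma|}\,\bigl\|2^{j\sigma}\|f_j\|_{L^p}\bigr\|_{\ell^r(\Z)}<\infty,
\]
which is exactly \eqref{R-E10}; since moreover $f\in\cS'_h$ by hypothesis, this proves $f\in\dot B^\sigma_{p,r}$.

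There is no serious obstacle in this argument: the only points that need a little care are the determination of the finite-overlap bound $N_0$ from the supports (a direct computation with the two annuli) and the licit interchange of $\ddk$ with the infinite sum, which is justified by the continuity of $\ddk$ on $\cS'$ and the assumed $\cS'_h$-convergence of the series. Everything else reduces to the triangle inequality and Young's convolution inequality on $\Z$.
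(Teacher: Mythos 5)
Your proof is correct and is precisely the standard argument for this classical Littlewood--Paley lemma (the paper states it without proof, referring to \cite[Chap.~2]{BCD}, where the same finite-overlap plus Young's convolution inequality reasoning is used). All the delicate points — the finite overlap $|j-k|\le N_0$, the interchange of $\ddk$ with the sum via $\cS'$-continuity, and the uniform $L^p$-boundedness of $\ddk$ — are handled properly.
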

The following result was used to bound  the terms of System \eqref{R-E1}
involving compositions of functions:
\begin{prop}\label{prop2.3}
Let $F:\R\rightarrow\R$ be  smooth with $F(0)=0.$
For  all  $1\leq p,r\leq\infty$ and  $\sigma>0$ we have
$F(f)\in \dot B^\sigma_{p,r}\cap L^\infty$  for  $f\in \dot B^\sigma_{p,r}\cap L^\infty,$  and
\begin{equation}\label{R-E12}
\|F(f)\|_{\dot B^\sigma_{p,r}}\leq C\|f\|_{\dot B^\sigma_{p,r}}
\end{equation}
with $C$ depending only on $\|f\|_{L^\infty},$ $F'$ (and higher derivatives),  $\sigma,$ $p$ and $d.$
\medbreak
If $\sigma>-\min(\frac dp,\frac d{p'}),$ then $f\in\dot B^\sigma_{p,r}\cap\dot B^{\frac dp}_{p,1}$
implies that $F(f)\in \dot B^\sigma_{p,r}\cap\dot B^{\frac dp}_{p,1},$ and
\begin{equation}\label{R-E13}
\|F(f)\|_{\dot B^\sigma_{p,r}}\leq C(1+\|f\|_{\dot B^{\frac dp}_{p,1}})\|f\|_{\dot B^\sigma_{p,r}}.
\end{equation}
\end{prop}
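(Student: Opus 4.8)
The plan is to establish the first estimate \eqref{R-E12} by the classical linearization argument (see \cite[Chap.~2]{BCD}), and then to reduce the second one \eqref{R-E13} to it by writing $F(f)$ as a product $f\cdot G(f)$ with $G$ smooth.

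For \eqref{R-E12}, the membership $F(f)\in L^\infty$ is immediate from $F(0)=0$, the continuity of $F$ and $f\in L^\infty$. For the Besov bound, I would use that $f\in\cS'_h$ forces $\dot S_{-N}f\to0$ in $L^\infty$, hence $F(\dot S_{-N}f)\to0$ uniformly, while $\dot S_Nf\to f$ a.e.\ and in $\cS'$ gives $F(\dot S_Nf)\to F(f)$ in $\cS'$ by dominated convergence; telescoping then yields, with convergence in $\cS'$,
$$F(f)=\sum_{j\in\Z}\bigl(F(\dot S_{j+1}f)-F(\dot S_jf)\bigr)=\sum_{j\in\Z}m_j\,\ddj f,\qquad m_j\triangleq\int_0^1 F'\bigl(\dot S_jf+\tau\,\ddj f\bigr)\,d\tau.$$
The key observation is that $\|\dot S_jf\|_{L^\infty}+\|\ddj f\|_{L^\infty}\lesssim\|f\|_{L^\infty}$ uniformly in $j$, so $m_j$ takes values in a fixed compact set on which $F'$ and all its derivatives are bounded; combined with Bernstein's inequality this gives $\|\nabla^N m_j\|_{L^\infty}\lesssim 2^{jN}$ for every $N\in\N$, hence, for $G_j\triangleq m_j\,\ddj f$, both $\|G_j\|_{L^p}\lesssim\|\ddj f\|_{L^p}$ and $\|\nabla^N G_j\|_{L^p}\lesssim 2^{jN}\|\ddj f\|_{L^p}$ (Leibniz plus Bernstein). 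Estimating $\|\ddk G_j\|_{L^p}$ by the first bound when $k\le j$ and by $2^{-kN}\|\nabla^N G_j\|_{L^p}$ (with $N>\sigma$) when $k\ge j$ yields
$$2^{k\sigma}\|\ddk F(f)\|_{L^p}\lesssim\sum_{j\in\Z}c_{k-j}\,\bigl(2^{j\sigma}\|\ddj f\|_{L^p}\bigr),$$
where $(c_m)_{m\in\Z}\in\ell^1(\Z)$ precisely because $\sigma>0$ (and $N>\sigma$); Young's inequality for series together with Proposition~\ref{prop2.1} then gives \eqref{R-E12}.

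For \eqref{R-E13}: when $\sigma>0$ this is already contained in \eqref{R-E12} (using $\dot B^{\frac dp}_{p,1}\hookrightarrow L^\infty$), so I would assume $\sigma\le0$ and write $F(z)=z\,G(z)$ with $G(z)\triangleq\int_0^1F'(\tau z)\,d\tau$, which is smooth. Since $\frac dp>0$, applying \eqref{R-E12} to $G-G(0)$ (smooth, vanishing at $0$) and recalling that adding a constant does not affect the homogeneous Besov norm gives $\|G(f)\|_{\dot B^{\frac dp}_{p,1}}\lesssim\|f\|_{\dot B^{\frac dp}_{p,1}}$, while $\|G(f)\|_{L^\infty}\le\sup_{|y|\le\|f\|_{L^\infty}}|G(y)|$. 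It then remains to invoke the classical product law (essentially the $t=0$ case of Propositions~\ref{prodlaws1}--\ref{prodlaws2}, see also \cite[Chap.~2]{BCD})
$$\|uv\|_{\dot B^\sigma_{p,r}}\lesssim\bigl(\|u\|_{L^\infty}+\|u\|_{\dot B^{\frac dp}_{p,1}}\bigr)\,\|v\|_{\dot B^\sigma_{p,r}},\qquad\sigma>-\min\Bigl(\tfrac dp,\tfrac d{p'}\Bigr),$$
applied with $u=G(f)$ and $v=f$; the membership $F(f)\in\dot B^{\frac dp}_{p,1}$ is just \eqref{R-E12} with exponent $\frac dp$.

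I expect the main difficulty to be the low-frequency regime when $\sigma\le0$: the linearization argument alone is useless there (the series $\sum_{k\le j}$ above diverges for $\sigma\le0$), which is exactly why \eqref{R-E13} needs the extra control $f\in\dot B^{\frac dp}_{p,1}$ and why one routes through the product $f\cdot G(f)$. Proving the displayed product estimate requires splitting $uv=T_uv+T_vu+R(u,v)$ via Bony's decomposition \eqref{R-E11}: the paraproduct $T_uv$ is harmless ($u\in L^\infty$), $T_vu$ costs $\|u\|_{\dot B^{\frac dp}_{p,1}}$ (and needs only $\sigma<\frac dp$), while the remainder $R(u,v)$ is the delicate term — it a priori lands in a space based on $L^{p/2}$ (for $p\ge2$) or requires first embedding one factor to the Lebesgue exponent $p'$ (for $p<2$) before returning to $\dot B^\sigma_{p,r}$, and this is precisely where the sharp lower bound $\sigma>-\min(d/p,d/p')$ appears.
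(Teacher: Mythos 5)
The paper does not actually prove this proposition: it is recalled in the appendix as a classical result with a pointer to \cite[Chap.~2]{BCD}, and your argument is precisely the standard one found there — Meyer's first linearization $F(f)=\sum_j m_j\,\ddj f$ with $m_j=\int_0^1F'(\dot S_jf+\tau\ddj f)\,d\tau$ for \eqref{R-E12}, and the factorization $F(z)=zG(z)$ combined with Bony's decomposition and the paraproduct/remainder mapping properties (where the threshold $\sigma>-\min(d/p,d/p')$ indeed comes from the remainder) for \eqref{R-E13}. The proof is correct; the only cosmetic slip is the appeal to Proposition~\ref{prop2.1}, which does not apply to the blocks $G_j=m_j\,\ddj f$ since they are not spectrally supported in annuli, but your direct estimate of $\|\ddk F(f)\|_{L^p}$ by the $\ell^1$ kernel $(c_m)$ already yields the Besov norm by definition, so nothing is lost.
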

Let us finally recall the following classical \emph{Bernstein inequality}:
\begin{equation}\label{R-E14}
\|D^kf\|_{L^{b}}
\leq C^{1+k} \lambda^{k+d(\frac{1}{a}-\frac{1}{b})}\|f\|_{L^{a}}
\end{equation}
that holds  for all function $f$ such that
$\mathrm{Supp}\,\mathcal{F}f\subset \{\xi\in \mathbb{R}^{d}: |\xi|\leq R\lambda\}$ for some $R>0$
and $\lambda>0,$  if  $k\in\N$ and  $1\leq a\leq b\leq\infty$.
\smallbreak
Let us also recall that, as a consequence of \cite[Lemma 2.2]{BCD}, we have for all $s\in\R$ if
$\Supp\cF f\subset\{\xi\in\R^d: r\lambda\leq|\xi|\leq R\lambda\}$ for some $0<r<R,$ 
 \begin{equation}\label{R-E14b}
\|\Lambda^sf\|_{L^{b}} \approx  \lambda^{s}\|f\|_{L^{b}}\with \Lambda^s\triangleq(-\Delta)^{\frac s2}.
\end{equation}

When localizing PDE's by means of  Littlewood-Paley decomposition,  one  
ends up with  bounds for each dyadic block in spaces of type $L^{q}_{T}(L^{p})\triangleq L^q(0,T;L^p(\R^d)).$
To get a  Besov type information, we then have to  perform a summation on $\ell^{r}(\mathbb{Z}),$
which motivates  the following definition  that has been first introduced by J.-Y. Chemin in \cite{Chemin}
for $0\leq T\leq+\infty,$ $\sigma\in\mathbb{R}$ and  $1\leq p,q,r\leq\infty$:
$$\|f\|_{\widetilde{L}^{q}_{T}(\dot{B}^{\sigma}_{p,r})}\triangleq\Big\|\bigl(2^{j\sigma}\|\dot{\Delta}_{j}f\|_{L^{q}_{T}(L^{p})}\bigr)\Big\|_{\ell^r(\Z)}.$$
For notational simplicity, index $T$  is  omitted if $T=+\infty.$
\smallbreak
We also used the following functional space:
\begin{equation}\label{R-E15}\widetilde{\mathcal{C}}_{b}(\R_+;\dot{B}^{\sigma}_{p,r})\triangleq\bigl\{f\in \mathcal{C}(\R_+;\dot{B}^{\sigma}_{p,r})\ s.t.\
\|f\|_{\widetilde{L}^{\infty}(\dot{B}^{\sigma}_{p,r})}<\infty\bigr\}\cdotp
\end{equation}
The above norms  may be compared with those of the
more standard  Lebesgue-Besov spaces $L^{q}_{T}(\dot{B}^{\sigma}_{p,r})$ via
 Minkowski's inequality:
\begin{equation}\label{R-E16}
\|f\|_{\widetilde{L}^{q}_{T}(\dot{B}^{\sigma}_{p,r})}\leq\|f\|_{L^{q}_{T}(\dot{B}^{\sigma}_{p,r})}\,\,\,
\mbox{if }\,\, r\geq q,\ \ \ \
\|f\|_{\widetilde{L}^{q}_{T}(\dot{B}^{\sigma}_{p,r})}\geq\|f\|_{L^{\rho}_{T}(\dot{B}^{\sigma}_{p,r})}\,\,\,
\mbox{if }\,\, r\leq q.
\end{equation}
Restricting the above norms to the low or high frequencies parts of distributions is
fundamental in our approach. For some fixed  integer $k_0$ (the value of which  follows from the proof of the main theorem), we  put $z^\ell\triangleq \dot S_{k_0}z$ and $z^h\triangleq z-z^\ell,$ 
and\footnote{For technical reasons, we need a small
overlap between low and high frequencies.}
\begin{equation}\label{eq:k0}
\|z\|_{\dot B^s_{p,1}}^\ell\triangleq\sum_{k\leq k_0} 2^{ks}\|\ddk z\|_{L^p},\qquad
\|z\|_{\dot B^s_{p,1}}^h\triangleq\sum_{k\geq k_0-1} 2^{ks}\|\ddk z\|_{L^p},\end{equation}
$$
\|z\|_{\wt L^\infty_T(\dot B^s_{p,1})}^\ell\triangleq\sum_{k\leq k_0} 2^{ks}\|\ddk z\|_{L^\infty_T(L^p)}\quad\hbox{and}\quad
\|z\|_{\wt L^\infty_T(\dot B^s_{p,1})}^h\triangleq\sum_{k\geq k_0-1} 2^{ks}\|\ddk z\|_{L^\infty_T(L^p)}.
$$

\end{document}